\newcommand{\Zz}{\mathbb{Z}}
\newcommand{\Pp}{\mathbb{P}}
\newcommand{\Rr}{\mathbb{R}}
\newcommand{\Qq}{\mathbb{Q}}
\newcommand{\Nn}{\mathbb{N}}
\newcommand{\Kk}{\mathbb{K}}
\newcommand{\NN}{\mathcal{N}}
\newcommand{\RR}{\mathfrak{R}}
\newcommand{\Ss}{\mathfrak{S}}
\newcommand{\Oo}{\mathcal{O}}
\newcommand{\BB}{\mathbb{B}}
\newcommand{\Bb}{\mathcal{B}}
\newcommand{\lcm}{\operatorname{lcm}}
\newcommand{\Supp}{\operatorname{Supp}}
\newcommand{\LCS}{\operatorname{LCS}}
\newcommand{\dive}{\operatorname{div}}
\newcommand{\modu}{\operatorname{mod}}
\newcommand{\la}{\langle}
\newcommand{\ra}{\rangle}
\newcommand{\lf}{\lfloor}
\newcommand{\rf}{\rfloor}
\newtheorem{theorem}{Theorem}[section]
\newtheorem{proposition}[theorem]{Proposition}
\newtheorem{definition}[theorem]{Definition}
\newtheorem{lemma}[theorem]{Lemma}
\newtheorem{corollary}[theorem]{Corollary}
\newtheorem{remark}{Remark}
\newtheorem{notation}{Notation}
\newtheorem{construction}{Construction}
\begin{document}

\title{Boundedness of $n$-complements on surfaces}
\begin{abstract}
In light of Shokurov's result \cite[Theorem 3]{Sh20} in 2020, the main purpose of this paper is to prove the boundedness of $n$-complements for surface pairs $(X/Z\ni o,B)$ under the same setting, without the asssumptions that $X/Z$ is of weak Fano type or $(X/Z\ni o,B)$ has a klt $\Rr$-complement.
\end{abstract}
\author{Xiangze Zeng}
\maketitle
\tableofcontents

\section{Introduction}

The concept of $n$-complements naturally arises from the study on the anticanonical linear system $|-nK_X|$, and was introduced by V.V.Shokurov in \cite{Sh93}. Later this concept of $n$-complements was generalized to $\Rr$-complements and Shokurov asked: does the existence of an $\Rr$-complement necessarily imply the existence of an $n$-complement for some integer $n$? And if so,what kind of restrictions can we put on this $n$? The widely expected result is that there exists a finite set $\NN$ of positive integers such that, for a certain class of relative pairs $(X/Z\ni o,B)$, as long as $(X/Z\ni o,B)$ has an $\Rr$-complement, it also has an $n$-complement for some $n\in\NN$. We'll refer to this statement as "boundedness of $n$-complements" throughout this paper. Boundedness of $n$-complements on surfaces was established by V.V.Shokurov in \cite{Sh97}. In \cite{Birkar19} C.Birkar proved boundedness of $n$-complements for pairs $(X,B)$ of Fano type with hyperstand multiplicities in arbitrary dimensions, and later Shokurov generalized this boundedness result to all relative pairs of weak Fano type in \cite{Sh20}. In this paper, we'll generalize Shokurov's boundedness result Theorem 3 in \cite{Sh20} to all surface pairs $(X/Z\ni o,B)$, without the weak Fano type assumption on $X/Z$ and without the restrictions on the multiplicities of $B$. To better state our main theorem \ref{main}, we recall the formal definitions of complements first:

\begin{definition}\label{R-comp}
\textup{Let $(X/Z\ni o,D)$ be a pair with a proper local morphism $X/Z\ni o$ and an $\Rr$-divisor $D$. Then a pair $(X/Z\ni o,D^+)$ with the same local morphism is called an} $\Rr$-complement \textup{of $(X/Z\ni o,D)$ if the following holds:
\begin{enumerate}
\item $D^+\geq D$;
\item $(X,D^+)$ is lc;
\item $K_X+D^+\sim_{\Rr}0/Z\ni o$.
\end{enumerate}
}
\end{definition}

\begin{definition}\label{n-comp}
\textup{Let $(X/Z\ni o,D)$ be a pair with a proper local morphism $X/Z\ni o$ and an $\Rr$-divisor $D=\sum d_iD_i$, and $n$ is a positive integer. Then a pair $(X/Z\ni o,D^+=\sum d_i^+D_i)$ with the same local morphism is called an} $n$-complement \textup{of $(X/Z\ni o,D)$ if the following holds:
\begin{enumerate}
\item for every prime divisor $D_i$,
$$ d_i^+\geq
\begin{cases}
1& \text{if $d_i=1$;}\\
\frac{\lf (n+1)d_i\rf}{n}&\text{otherwise;}
\end{cases}$$
\item $(X,D^+)$ is lc;
\item $K_X+D^+\sim_{n}0/Z\ni o$.
\end{enumerate}
}
\end{definition}

Now we are ready to state our main result Theorem \ref{main}, with the complementary index $n$ under the restrictions given by Definition \ref{comp-restriction} below.

\begin{definition}\label{comp-restriction}
\textup{Given these data: a positive integer $I$, a nonrational vector $v\in\Rr^l$, a direction $e$ of the rational affine span $\la v\ra$ and a real number $\epsilon>0$. We say an integer $n$ is under} complementary restrictions \textup{with respect to $I,v,e,\epsilon$ if there exists a vector $v_n\in\Rr^l$ satisfying the following:
\begin{enumerate}
\item Divisibility: $I|n$;
\item Denominator with $n$: $nv_n\in\Zz^l$;
\item Approximation: $||v_n-v||<\epsilon/n$;
\item Directional Approximation: $||\frac{v_n-v}{||v_n-v||}-e||<\epsilon$.
\end{enumerate}
}
\end{definition}

\begin{theorem}\label{main}
Suppose we have the data $I,v,e,\epsilon$ as in Definition \ref{comp-restriction}. Then we can find a finite set $\NN=\NN(I,v,e,\epsilon)$ such that:
\begin{enumerate}
\item {\rm Restrictions:} $\NN$ consists of integers under complementary restrictions;
\item {\rm Existence of $n$-complements:} suppose we have a surface pair $(X/Z\ni o,B)$ with a proper local morphism $X/Z\ni o$, connected central fiber $X_o$ and an $\Rr$-boundary $B$. Then if $(X/Z\ni o,B)$ has an $\Rr$-complement, it also has an $n$-complement for some $n\in\NN$.
\end{enumerate}
\end{theorem}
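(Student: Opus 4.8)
The plan is to combine three ingredients: (a) a reduction of the real-coefficient statement to a rational-coefficient one, for boundaries whose coefficients are near $v$, using that admitting an $\Rr$-complement is a rational-polyhedral condition on the coefficients; (b) a minimal model program over $Z$, which terminates since $\dim X=2$ and puts the pair into a short list of normal forms; and (c) a case analysis of these forms, in which each is either of Fano type over a base of dimension $\le 1$ — so that \cite[Theorem~3]{Sh20} applies, after a canonical bundle formula onto that base — or a rigid log canonical Calabi--Yau configuration whose complementary index is controlled by the structure theory of log canonical surfaces as in \cite{Sh97}.

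\textbf{Reductions.} Fix an $\Rr$-complement $(X/Z\ni o,B^+)$ and set $\theta:=B^+-B\ge 0$, so $(X,B)$ is lc and $K_X+B\sim_\Rr-\theta/Z$. Replacing $(X,B)$ by a $\Qq$-factorial dlt model (crepant, and on which $n$-complements push down), we may assume $X$ is $\Qq$-factorial and $(X,B)$ dlt. Running a $(K_X+B)$-MMP over $Z$ — it terminates since $\dim X=2$, and $n$-complements pull back along its $(K_X+B)$-non-positive steps — and then using the semiample fibration of the nef model together with the inequality $-(K_{X'}+B')\sim_\Rr\theta'\ge 0/Z$ (the pushforward of $\theta$) and the negativity lemma, we are reduced to one of: (i) a Mori fibre space or a Fano-type pair over a base $T$ of dimension $\le 1$, with $-(K_{X'}+B')$ ample over $T$; (ii) a fibration $f\colon X'\to T$ over $Z$ with $\dim T\le 1$ and $K_{X'}+B'\sim_\Rr 0$ over the generic point of $T$ (with $T=Z$ allowed when $\dim Z\le 1$); (iii) a birational contraction $X'\to Z$ onto a surface germ with $K_{X'}+B'\sim_\Rr 0/Z$.

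\textbf{Treatment of the cases.} In case (iii) this is the local theory of complements for a log canonical surface germ: the dual graph of the minimal resolution together with the transform of $B'$ ranges over a bounded family, and by adjunction to the log canonical centre (when there is one) the complementary index lies in a fixed finite set. In case (ii) I would apply the canonical bundle formula $K_{X'}+B'\sim_\Rr f^*(K_T+B_T+M_T)$: a general fibre is a curve $F$ with $(K_{X'}+B')|_F\sim_\Rr 0$, hence a $\Pp^1$ with at most two boundary points or a genus-one curve, so the moduli part $M_T$ is nef with a uniformly bounded denominator — $\Rr$-trivial when $T$ is a curve germ, and with $12M_T$ integral in the global elliptic case — while boundedness of complements for the generalized pair $(T,B_T+M_T)$ on a curve or a point is elementary, and such a complement lifts along $f$, the connectedness of $X_o$ entering the cohomological lifting. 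In case (i), after enlarging $B'$ within the Fano structure to obtain a klt $\Rr$-complement, \cite[Theorem~3]{Sh20} supplies the desired $n$-complement; the Mori-fibre-space subcase with $\dim T=1$ can alternatively be treated by adjunction to a general fibre (a log Fano curve, with bounded complements) and spreading out over $T$. In all cases the complementary index obtained lies in a finite set depending only on $I$ and on bounds for the coefficients of the boundary.

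\textbf{From $\Qq$ back to $\Rr$.} For the given real $v\in\Rr^l$, let $P\subseteq\la v\ra$ be the set of $w$ in the rational affine span of $v$ for which $(X/Z\ni o,B(w))$ admits an $\Rr$-complement; by the theory of $\Rr$-complements $P$ is a rational polytope containing $v$, and $e$ may be taken to point from $v$ into $P$. For suitable integers $n$ — in a finite set determined by $I$, the denominators arising above, and $\epsilon$ — one produces $v_n\in P$ with $nv_n\in\Zz^l$, $\|v_n-v\|<\epsilon/n$ and directional error below $\epsilon$, so $n$ is under the complementary restrictions of Definition~\ref{comp-restriction}; since $v_n\in P$, the pair $(X/Z\ni o,B(v_n))$ has an $\Rr$-complement and hence an $n$-complement by the rational case, and the inequality $d_i^+\ge\lf(n+1)d_i\rf/n$ of Definition~\ref{n-comp} is exactly what makes this an $n$-complement of $(X/Z\ni o,B)$ as well. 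I expect the main obstacle to be case (ii): making the canonical bundle formula compatible with the relative local geometry over $Z\ni o$ and bounding the denominator of the moduli part uniformly in the (bounded, possibly irrational) coefficients of $B$, so that a single finite set $\NN=\NN(I,v,e,\epsilon)$ works for all pairs at once.
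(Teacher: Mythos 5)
Your overall architecture (dlt model, MMP over $Z$, case division by the dimension of the base of the resulting fibration, canonical bundle formula, complements on curves, index theorems) does parallel the paper's Steps 3--5, but two of your steps have genuine gaps. The first is your case (i), which breaks down exactly where the paper has to work hardest: when $(X',B')$ is lc but not klt and $-(K_{X'}+B')$ is big and nef, you cannot ``enlarge $B'$ within the Fano structure to obtain a klt $\Rr$-complement,'' since any $B^+\geq B'$ retains the non-klt places of $(X',B')$; hence no klt $\Rr$-complement exists and \cite[Theorem~3]{Sh20} does not apply. The paper's Theorem \ref{big-nef-case} treats precisely this case by passing to a log resolution, restricting to the reduced non-klt locus $S$ (a connected nodal curve by Lemma \ref{connectedness-lemma}), producing a bounded complement on $S$ via the classification in Proposition \ref{classification} and Theorem \ref{simultaneous}, lifting it to the surface by Kawamata--Viehweg vanishing, and then verifying log canonicity of the lift by inversion of adjunction together with a second connectedness argument. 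None of this is replaceable by a citation to the Fano-type boundedness result, and your case (iii) has the same omission: lc surface germs do not form a bounded family, so ``the dual graph ranges over a bounded family'' is not a valid justification.

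The second gap is your passage from $\Qq$ back to $\Rr$, which misreads the role of the data $(v,e)$: in Definition \ref{comp-restriction} and Theorem \ref{main}, $v$ is an abstract vector constraining which integers $n$ are admissible, not the coefficient vector of $B$, and $e$ is prescribed, so it cannot ``be taken to point from $v$ into $P$''; moreover your polytope $P$ depends on the pair, so a finite set of $n$ extracted from it would not be uniform. The paper resolves both difficulties at once in Theorem \ref{simultaneous}: it approximates the concatenated vector $u=(v,B)$ along a direction $e'$ lifting $e$, and a compactness argument over the space of all admissible coefficient vectors yields a single finite set $\NN$ of integers under the complementary restrictions that works for every pair. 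Relatedly, the uniform control of the adjunction index in your case (ii) for real horizontal coefficients --- which you flag as the main obstacle --- is obtained in the paper by replacing $B$ with its low hyperstandard approximation $B_{\NN_1}$ and observing that $D=B-B_{\NN_1}$ is vertical for the Iitaka fibration of $D$, so the horizontal coefficients of $B$ automatically lie in the fixed rational set $\Gamma(\NN_1,\Phi)$ to which Corollary \ref{adjunction-index-existence} applies.
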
 

I would like to thank Professor Shokurov for his help and guidance. This work wouldn't be completed without his deep insights and infinite patience. 

\section{Preliminaries}

\subsection{Basic Properties of Complements}
In this section we list some basic properties of $n$-complements frequently used throughout this paper. Lemma \ref{pushforwards} implies that $n$-complements are preserved by birational pushforwards, while Lemma \ref{monotonicity} is a monotonicity result for $n$-complements.

\begin{lemma}\label{pushforwards}
(\cite[Proposition 4.3.1]{Pr01}). Let $f:Y/Z\to X/Z$ be a birational contraction, $D$ a subboundary on $Y$. If $(Y/Z,D)$ has an $n$-complement $(Y/Z,D^+)$, then $(X/Z,f(D))$ has an $n$-complement $(X/Z,f(D^+))$.
\end{lemma}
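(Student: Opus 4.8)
The plan is to verify, one by one, the three conditions of Definition \ref{n-comp} for the pair $(X/Z\ni o,f(D))$ with $(X/Z\ni o,f(D^+))$ as the candidate $n$-complement, exploiting that the proper birational morphism $f$ induces a pushforward $f_*$ on Weil divisors which annihilates $f$-exceptional components and restricts to a coefficient-preserving bijection between the prime divisors of $X$ and the non-$f$-exceptional prime divisors of $Y$. Write $g_X:X\to Z$ and $g_Y=g_X\circ f$ for the structure morphisms. The coefficient condition \ref{n-comp}(1) is then immediate: for a prime divisor $P\subset X$ with birational transform $\tilde P\subset Y$ one has $\operatorname{mult}_P f(D)=\operatorname{mult}_{\tilde P}D$ and $\operatorname{mult}_P f(D^+)=\operatorname{mult}_{\tilde P}D^+$, so the inequality demanded on $X$ is literally the one already known on $Y$; in particular $f(D^+)\geq f(D)$.

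Next I would establish the relative $\Zz$-linear triviality \ref{n-comp}(3). Starting from $n(K_Y+D^+)\sim 0/Z\ni o$, and choosing compatible canonical divisors so that $f_*K_Y=K_X$, I would push forward: since $f_*$ sends a relatively principal divisor to the principal divisor of the same rational function and $f_*\Oo_Y=\Oo_X$ ($X$ being normal), this gives $n(K_X+f(D^+))\sim 0/Z\ni o$, and in particular $K_X+f(D^+)$ is $\Rr$-Cartier. This $\Rr$-Cartier conclusion is precisely what makes the final step well posed, so I would carry it out first.

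Finally, to show $(X,f(D^+))$ is lc --- the only step with genuine content --- I would argue as follows. From $n(K_Y+D^+)\sim 0/Z\ni o$ we get $K_Y+D^+\sim_{\Rr}0/Z$, which refines to $K_Y+D^+\sim_{\Rr}0/X$ since $f$ lies over $Z$. Setting $E:=(K_Y+D^+)-f^*(K_X+f(D^+))$ (legitimate now that the second summand is $\Rr$-Cartier), one checks $f_*E=0$ from the identities above, so $E$ is $f$-exceptional; and $E\sim_{\Rr}0/X$ because both summands are. The negativity lemma applied to $-E$ then gives $E\geq 0$ (applied to both $E$ and $-E$ it gives $E=0$), so $(Y,D^+)$ and $(X,f(D^+))$ are crepant birational and hence share discrepancies at every geometric valuation; since $(Y,D^+)$ is lc, so is $(X,f(D^+))$. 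I expect this last step to be the main obstacle: it needs the $\Rr$-Cartier property of $K_X+f(D^+)$ in hand before the negativity lemma can be invoked --- hence the order of the steps --- and it needs the verification that ``$\sim 0/Z\ni o$'' together with the pullback and pushforward operations behave compatibly over a neighbourhood of $o$. If $f$ were only a birational contraction rather than a morphism, I would first resolve its graph and reduce to the morphism case, the extra exceptional data being discarded by $f_*$ anyway.
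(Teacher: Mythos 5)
The paper gives no proof of this lemma at all---it is quoted verbatim from \cite[Proposition 4.3.1]{Pr01}---and your argument is correct and is essentially the standard proof of that cited result: coefficients are preserved because $f_*$ is a coefficient-preserving bijection on non-exceptional prime divisors, $n(K_X+f(D^+))\sim 0/Z\ni o$ descends by pushing forward the same rational function (which also gives the needed $\Rr$-Cartier property), and the negativity lemma applied to $E=(K_Y+D^+)-f^*(K_X+f(D^+))$, which is $f$-exceptional and numerically trivial over $X$, forces $E=0$, so the pairs are crepant and lc descends. No gaps; the only caveat is the usual convention that everything is taken after shrinking $Z$ to a neighborhood of $o$, which your argument implicitly respects.
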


\begin{remark}\label{pullback}
\textup{If we put certain restrictions on the coefficients of the boundary $D$, we can also pull back complements along a birational contraction. See \cite[Proposition 4.3.2]{Pr01} for one of such result.}
\end{remark}



\begin{lemma}\label{monotonicity}
(\cite[Proposition 1]{Sh20}). Let $(X/Z,D)$ and $(X/Z,D\,')$ be two pairs on $X$ with $D\geq D\,'$. If $(X/Z,D)$ has an $n$-complement $(X/Z,D^+)$, then $(X/Z,D^+)$ is also an $n$-complement of $(X/Z,D\,')$.
\end{lemma}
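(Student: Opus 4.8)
The plan is to observe that, of the three requirements in Definition \ref{n-comp}, only the first one involves the divisor $D$ that we are shrinking; the other two are inherited for free. Indeed, conditions (2) and (3) refer only to the divisor $D^+$ and to the fixed local morphism $X/Z\ni o$, neither of which changes when we pass from $D$ to the smaller divisor $D\,'$. Hence $(X,D^+)$ is still lc and $K_X+D^+\sim_n 0/Z\ni o$ still holds, and it remains only to verify condition (1) of Definition \ref{n-comp} for the pair $(X/Z,D\,')$. Writing $D=\sum d_iD_i$, $D\,'=\sum d_i'D_i$ and $D^+=\sum d_i^+D_i$ over all prime divisors $D_i$ (so that $d_i'\le d_i$ for every $i$ by hypothesis), and setting $\phi_n(d):=1$ if $d=1$ and $\phi_n(d):=\frac{\lf(n+1)d\rf}{n}$ otherwise, condition (1) for $(X/Z,D)$ says precisely $d_i^+\ge\phi_n(d_i)$ for all $i$, while for $(X/Z,D\,')$ it says $d_i^+\ge\phi_n(d_i')$. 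So the lemma reduces to showing that $\phi_n$ is non-decreasing on $\Rr$.

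To see this, note first that $d\mapsto\lf(n+1)d\rf$ is non-decreasing, so $\phi_n$ is non-decreasing separately on $(-\infty,1)$ and on $(1,+\infty)$; the only point to check is the behaviour across $d=1$, where $\phi_n$ is given by a different clause. If $d<1$, then $(n+1)d<n+1$, so $\lf(n+1)d\rf\le n$ and $\phi_n(d)\le 1=\phi_n(1)$; if $d>1$, then $(n+1)d>n+1$, so $\lf(n+1)d\rf\ge n+1$ and $\phi_n(d)\ge\frac{n+1}{n}>1=\phi_n(1)$. Therefore $\phi_n$ is non-decreasing on all of $\Rr$.

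Combining the two observations: for every prime divisor $D_i$ we have $d_i^+\ge\phi_n(d_i)\ge\phi_n(d_i')$ since $d_i'\le d_i$, which is exactly condition (1) of Definition \ref{n-comp} for $(X/Z,D\,')$; together with conditions (2) and (3), this proves that $(X/Z,D^+)$ is an $n$-complement of $(X/Z,D\,')$. I do not expect any genuine obstacle here: the argument is entirely elementary, and the only subtlety worth flagging is the piecewise definition of the threshold at coefficient value $1$, where monotonicity could a priori fail but in fact does not, as the computation above shows.
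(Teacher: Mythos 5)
Your proof is correct, and it is essentially the standard argument: the paper itself gives no proof but only cites \cite[Proposition 1]{Sh20}, whose proof is exactly this elementary observation that conditions (2) and (3) of Definition \ref{n-comp} do not involve the lower boundary, while the threshold $d\mapsto\lf(n+1)d\rf/n$ (with the value $1$ at $d=1$) is non-decreasing, so condition (1) for $D$ implies it for $D\,'\leq D$. Your check of monotonicity across the piecewise clause at $d=1$ is the only point of substance and is handled correctly.
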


\subsection{Rational Approximation of Coefficients}

We'll quote and use certain known boundedness results on $n$-complements for the main proof. Since these positive results are established under the assumption that coefficients of $B$ are contained in a hyperstandard set, or more generally, a dcc set of rational numbers in [0,1], we briefly introduce the definition of hyperstandard sets and a procedure of lower rational approximation of coefficients using hyperstandard sets here. For more details of related constructions, properties and proofs, see Construction 1 from \cite{Sh20}.  

\begin{definition}\label{hyperstandard-sets}
(\cite[3.2]{PSh09}). \textup{Let $\RR$ be a set of rational numbers in $[0,1]$. Then a hyperstandard set is $$\Phi=\Phi(\RR)=\{1-\frac{r}{l}|r\in\RR, \text{$l$ is a positive integer}\}\cap[0,1].$$}
\end{definition}

\begin{remark}\label{standard-multiplicities}
\textup{This definition of hyperstandard sets is a generalization of the set $S$ of standard multiplicities where $S=\{1\}\cup\{1-1/l|l\in\Nn^+\}=\Phi(\{0,1\})$. Hence to make sure that the hyperstandard set $\Phi(\RR)$ contains $S$, we'll always assume $\{0,1\}\subset\RR$.}
\end{remark}

For a finite set $\NN$ of positive integers, and a hyperstandard set $\Phi=\Phi(\RR)$ as in Definition \ref{hyperstandard-sets}, we can construct a new set $\Gamma(\NN,\Phi)$ of rational numbers in [0,1] as follows: $$\Gamma(\NN,\Phi)=\{1-\frac{r}{l}+\frac{1}{l}(\sum\limits_{n\in\NN}\frac{m_n}{n+1})|r\in\RR, l\in\Nn^+, m_n\in\Nn\}\cap[0,1].$$ $\Gamma(\NN,\Phi)$ is again a hyperstandard set by the following lemma.

\begin{lemma}\label{prop-hyperstandard}
(\cite[Corollary 4]{Sh20}). Let $\NN$ be a nonempty finite set of positive integers and $\RR$ be a finite set of rational numbers in [0,1]. Then there exists a new finite set $\RR\,'$ of rational numbers in [0,1] such that $$\Gamma(\NN,\Phi)=\Phi(\RR\,').$$ Moreover, $\Gamma(\NN,\Phi)$ is a dcc set of rational numbers in [0,1] with only one accumulation point $1\in\Gamma(\NN,\Phi)$.
\end{lemma}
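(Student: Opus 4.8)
The plan is to rewrite a general element of $\Gamma(\NN,\Phi)$ in the shape $1-t/l$ and to pin down the finite set of values $t$ that can occur, so that $\Gamma(\NN,\Phi)$ is exhibited as a hyperstandard set $\Phi(\RR\,')$ directly from Definition \ref{hyperstandard-sets}. The starting point is the identity
$$1-\frac{r}{l}+\frac{1}{l}\sum_{n\in\NN}\frac{m_n}{n+1}=1-\frac{1}{l}\Bigl(r-\sum_{n\in\NN}\frac{m_n}{n+1}\Bigr),$$
together with the remark that its left-hand side lies in $[0,1]$ exactly when $\sum_{n\in\NN}\frac{m_n}{n+1}\le r$: the lower bound ($\ge 0$) is automatic because $r\le 1\le l$ and the sum is nonnegative, while the upper bound ($\le 1$) is precisely that inequality. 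Setting
$$\RR\,'=\Bigl\{\,r-\sum_{n\in\NN}\frac{m_n}{n+1}\ \Big|\ r\in\RR,\ m_n\in\Nn\ (n\in\NN),\ \sum_{n\in\NN}\frac{m_n}{n+1}\le r\,\Bigr\},$$
I would then verify the set equality $\Gamma(\NN,\Phi)=\Phi(\RR\,')$: by the identity, every element of $\Gamma(\NN,\Phi)$ has the form $1-t/l$ with $t\in\RR\,'$ and $l\in\Nn^+$, and conversely each such $1-t/l$ automatically lies in $[0,1]$ (since $0\le t\le r\le 1\le l$), hence in $\Gamma(\NN,\Phi)$; in particular the intersection with $[0,1]$ in the definition of $\Phi(\RR\,')$ is redundant here. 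Finally $\RR\,'$ is a finite set of rationals in $[0,1]$: its elements are rational, nonnegative and bounded above by $r\le 1$, and the constraint $\sum_{n\in\NN}\frac{m_n}{n+1}\le r\le 1$ forces $m_n\le n+1$ for every $n\in\NN$, leaving only finitely many tuples $(m_n)_{n\in\NN}$; and since $0\in\RR$ we get $0\in\RR\,'$, so $1=1-0/l\in\Phi(\RR\,')=\Gamma(\NN,\Phi)$.

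For the ``moreover'' clause I would prove the general fact that, for any finite set $\RR\,'\subseteq[0,1]$ containing $0$, the set $\Phi(\RR\,')$ is a dcc set of rational numbers in $[0,1]$ with $1$ as its unique accumulation point and with $1\in\Phi(\RR\,')$. The quantitative heart of it is this: if $x=1-r/l\in\Phi(\RR\,')$ satisfies $x\le 1-\delta$ for some $\delta>0$, then $r>0$ and $r/l\ge\delta$, so $l\le r/\delta\le 1/\delta$; hence for every $\delta>0$ the set $\Phi(\RR\,')\cap[0,1-\delta]$ is finite, being the image of finitely many pairs $(r,l)$. This immediately yields the descending chain condition (an infinite strictly decreasing sequence would lie in some $[0,1-\delta]$), forces every accumulation point to equal $1$, and---since $1-r/l\to 1$ as $l\to\infty$ for any fixed positive $r\in\RR\,'$---shows that $1$ is genuinely an accumulation point, which lies in $\Phi(\RR\,')$ because $0\in\RR\,'$.

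I do not foresee a real obstacle. The one point that deserves care is the bookkeeping in the first step---namely that intersecting with $[0,1]$ in the definition of $\Gamma(\NN,\Phi)$ corresponds \emph{exactly} to the single inequality $\sum_{n\in\NN}\frac{m_n}{n+1}\le r$---so that $\Gamma(\NN,\Phi)=\Phi(\RR\,')$ is an honest equality of sets and not merely an inclusion. (Alternatively one could describe $\RR\,'$ via a common denominator $D=\lcm\{n+1:n\in\NN\}$ and numerical-semigroup considerations for the integers $\sum_{n\in\NN}m_n D/(n+1)$, but the crude bound $m_n\le n+1$ already yields finiteness with no extra machinery.)
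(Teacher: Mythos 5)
Your proof is correct. The paper does not actually prove this lemma---it is quoted from \cite[Corollary 4]{Sh20}---so there is nothing to compare line by line; your argument is the natural self-contained one. The key observation, that membership in $[0,1]$ is exactly the inequality $\sum_{n\in\NN}\frac{m_n}{n+1}\le r$, both identifies $\Gamma(\NN,\Phi)$ with $\Phi(\RR\,')$ as an honest equality and bounds $m_n\le n+1$, giving finiteness of $\RR\,'$; the finiteness of $\Phi(\RR\,')\cap[0,1-\delta]$ then delivers dcc and the localization of accumulation points at $1$ in one stroke. The only caveat is that the final claims ($1\in\Gamma(\NN,\Phi)$, and that $1$ genuinely is an accumulation point rather than the set being finite) need $\RR$ to contain $0$ and a positive element respectively; you invoke $0\in\RR$ explicitly, and both are supplied by the paper's standing convention $\{0,1\}\subset\RR$ from Remark \ref{standard-multiplicities}, so this is consistent with how the lemma is used.
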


\begin{construction}\label{low-approximation}
\textup{Using the fact $\Gamma(\NN,\Phi)$ is a dcc set with only one accumulation point 1 from Lemma \ref{prop-hyperstandard}, for any $b\in[0,1]$, there exists a unique largest rational approximation $b_{\NN,\Phi}\leq b$ in $\Gamma(\NN,\Phi)$. For simplicity, in the following text we'll always take $\RR=\{0,1\}$ and denote this $b_{\NN,\Phi}$ by $b_{\NN}$. Similarly, for a boundary $B$ on $X$, we can construct its low approximation $B_{\NN}$.}
\end{construction}

The following lemma shows that this low approximation of the boundary $B$ preserves $n$-complements when $n\in\NN$.

\begin{lemma}\label{approximation-complements}
(\cite[Corollary 6]{Sh20}). Let $(X/Z,B)$ be a pair with a boundary $B$. If $(X/Z,B_{\NN})$ has an $n$-complement $(X/Z,B^+)$ with $n\in\NN$, then $(X/Z,B^+)$ is also an $n$-complement of $(X/Z,B)$. 
\end{lemma}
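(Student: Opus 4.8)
The plan is to notice that Conditions (2) and (3) of Definition \ref{n-comp} do not involve $B$ at all, so that the whole statement reduces to checking that the coefficient lower bound of Condition (1) is unaffected by replacing $B$ with its low approximation $B_{\NN}$ of Construction \ref{low-approximation}; and this, in turn, reduces to one combinatorial feature of $\Gamma(\NN,\Phi)$. (This does \emph{not} follow from the monotonicity Lemma \ref{monotonicity}, which goes the other way: the content here is exactly that one may pass \emph{upward} from $B_{\NN}$ to $B$.) Throughout, write $B=\sum b_iD_i$ and $B_{\NN}=\sum c_iD_i$, where $c_i=(b_i)_{\NN}\le b_i$ is the largest element of $\Gamma(\NN,\Phi)$ not exceeding $b_i$, and $B^+=\sum d_i^+D_i$.

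The main step is the observation that, for each $n\in\NN$ and each integer $j$ with $0\le j\le n+1$, the rational $\frac{j}{n+1}$ lies in $\Gamma(\NN,\Phi)$: taking $l=1$, $r=1\in\RR$, and $m_n=j$ with $m_{n'}=0$ for $n'\ne n$ in the defining expression for $\Gamma(\NN,\Phi)$ produces $1-1+\frac{j}{n+1}=\frac{j}{n+1}\in[0,1]$. Here the summand $\frac{m_n}{n+1}$ is available precisely because $n\in\NN$, and this is the only place the hypothesis $n\in\NN$ enters. From this I would deduce, for each $i$ with $b_i<1$: setting $j=\lf(n+1)b_i\rf$ gives $\frac{j}{n+1}\le b_i<\frac{j+1}{n+1}$; since $\frac{j}{n+1}\in\Gamma(\NN,\Phi)$ and $\frac{j}{n+1}\le b_i$, maximality of $c_i$ forces $\frac{j}{n+1}\le c_i$, while $c_i\le b_i<\frac{j+1}{n+1}$, so
$$\lf(n+1)c_i\rf=j=\lf(n+1)b_i\rf.$$
Similarly $b_i=1$ if and only if $c_i=1$, using that $1\in\Gamma(\NN,\Phi)$ (take $l=1$, $r=0$, all $m_{n'}=0$) together with $c_i\le b_i\le1$.

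It then remains only to unwind the definitions. Conditions (2) and (3) for $(X/Z\ni o,B^+)$ to be an $n$-complement of $(X/Z\ni o,B)$ --- that $(X,B^+)$ is lc and $K_X+B^+\sim_n0/Z\ni o$ --- are literally the same as Conditions (2) and (3) for $(X/Z\ni o,B^+)$ to be an $n$-complement of $(X/Z\ni o,B_{\NN})$, hence hold by hypothesis. For Condition (1), fix a component $D_i$ of $B$. If $b_i=1$ then $c_i=1$, and $d_i^+\ge1$ since $(X/Z\ni o,B^+)$ is an $n$-complement of $(X/Z\ni o,B_{\NN})$. If $b_i<1$ then $c_i<1$; when $c_i>0$ the divisor $D_i$ is a component of $B_{\NN}$ and $d_i^+\ge\frac{\lf(n+1)c_i\rf}{n}=\frac{\lf(n+1)b_i\rf}{n}$ by the displayed identity, while when $c_i=0$ the identity gives $\lf(n+1)b_i\rf=0$, so the required bound reads $0\le d_i^+$, which holds since $B^+$ is effective. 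This checks Condition (1) for every component of $B$, so $(X/Z\ni o,B^+)$ is an $n$-complement of $(X/Z\ni o,B)$.

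I do not anticipate a genuine obstacle here: the only real content is the combinatorial fact that $\Gamma(\NN,\Phi)$ is fine enough at scale $\frac1{n+1}$ for $n\in\NN$ --- equivalently, that lowering a coefficient to its $\Gamma(\NN,\Phi)$-approximant never crosses a point of the lattice $\frac1{n+1}\Zz$ --- which is forced by the shape of the defining expression. The one minor point to handle is the effectivity of $B^+$ used in the case $c_i=0$; this is part of the standing convention that a complement of a boundary is again a boundary, or can be extracted from $d_i^+\ge\frac{\lf(n+1)c_i\rf}{n}\ge0$ on the components of $B_{\NN}$.
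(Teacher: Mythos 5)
Your argument is correct. Note that the paper itself gives no proof of this lemma --- it is quoted verbatim from \cite[Corollary 6]{Sh20} --- so there is nothing internal to compare against; what you have written is essentially the intended argument behind that citation: the summands $\frac{m_n}{n+1}$ in the definition of $\Gamma(\NN,\Phi)$ exist precisely so that $\frac{j}{n+1}\in\Gamma(\NN,\Phi)$ for $n\in\NN$, which forces $\lf(n+1)b_i\rf=\lf(n+1)(b_i)_{\NN}\rf$ for $b_i<1$ and $b_i=1\Leftrightarrow(b_i)_{\NN}=1$, and conditions (2) and (3) of Definition \ref{n-comp} are insensitive to replacing $B_{\NN}$ by $B$. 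Your handling of the case $c_i=0$ is also fine: reading condition (1) over all prime divisors (with zero coefficients for non-components of $B_{\NN}$) already gives $d_i^+\geq 0$ there, so the required inequality for $B$ is vacuously satisfied.
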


\subsection{Connectedness Lemma}

In this section we prove a modified version of the well-known Connectedness Lemma, which is Lemma \ref{connectedness-lemma}. Let's first recall the concept of locus of lc singularities in Definition \ref{LCS}.

\begin{definition}\label{LCS}
\textup{Let $X$ be a normal variety, $(X,D)$ a log pair. Then a subvariety $W\subset X$ is a} center of lc singularities, \textup{if there exists a birational contraction $f:Y\to X$ and a divisor $E$ (not necessarily exceptional) on $Y$, such that the discrepancy $a(E,D,X)\leq-1$ and $f(E)=W$. The union of all centers of lc singularities is called the} locus of lc singularities of $(X,D)$ \textup{and denoted by $\LCS(X,D)$.}
\end{definition}


\begin{lemma}\label{connectedness-lemma}
(\cite[5.7]{Sh93}, \cite[Theorem 17.4]{KA92}). Let $(X/Z\ni o, D)$ be a pair with a proper local morphism $g:X\to Z\ni o$ and connected central fiber $g^{-1}(o)$, $D=\sum d_iD_i$ a divisor on $X$ such that:
\begin{enumerate}
\item if $d_i<0$, then $D_i$ is very $g$-exceptional in the sense that $g_*(\Oo_X(D_i))=\Oo_Z$;
\item $-(K_X+D)$ is big and nef over $Z$.
\end{enumerate}
Then $\LCS(X,D)$ is connected in a neighborhood of the fiber $g^{-1}(o)$.
\end{lemma}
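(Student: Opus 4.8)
The plan is to reduce the Connectedness Lemma to its classical form from \cite{Sh93} and \cite{KA92} by a standard perturbation-and-blowup argument, the only novelty here being the bookkeeping of the very $g$-exceptional negative part and the "in a neighborhood of $g^{-1}(o)$" localization. First I would pass to a log resolution $f\colon Y\to X$ of $(X,D)$ and write $K_Y+\widetilde D+\sum a_jE_j=f^*(K_X+D)$, so that $\LCS(X,D)$ is the image under $f$ of the locus where the coefficients of $\widetilde D+\sum a_jE_j$ are $\geq 1$; since images of connected sets are connected, it suffices to prove that this locus upstairs is connected in a neighborhood of $(g\circ f)^{-1}(o)$. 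Because $f$ is a birational contraction and $g^{-1}(o)$ is connected, $(g\circ f)^{-1}(o)$ is again connected, so we may replace $X$ by $Y$, $g$ by $g\circ f$, and $D$ by $\widetilde D+\sum a_jE_j$; one checks that condition (1) is preserved, as the newly-created exceptional divisors with negative coefficient are $f$-exceptional and hence very exceptional for $g\circ f$, and $-(K_Y+D')$ remains big and nef over $Z$ because it is the pullback of a big and nef divisor. Thus we may assume from the start that $(X,D)$ has simple normal crossing support.

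Next I would dispose of the components of the negative part. Write $D=D^{\geq 0}-N$ where $N\geq 0$ is supported on very $g$-exceptional divisors. The key point is that adding back $N$ does not change $\LCS$ near the fiber: a prime divisor $D_i$ appearing in $N$ has $g_*\Oo_X(D_i)=\Oo_Z$, which in particular forces $g(D_i)$ to be a proper closed subset of $Z$ not containing $o$ in its image is too strong — rather, one uses the standard fact (as in the proof of \cite[Theorem 17.4]{KA92}) that very exceptional divisors can be absorbed: by a further blowup one arranges that $N$ and $\lceil$ the effective boundary part $\rceil$ have disjoint support near $g^{-1}(o)$, so $\LCS(X,D)=\LCS(X,D^{\geq 0})$ in a neighborhood of $g^{-1}(o)$. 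Now $(X,D^{\geq 0})$ has an effective snc boundary-type divisor with $-(K_X+D^{\geq 0})=-(K_X+D)-N$; here I need $-(K_X+D^{\geq 0})$ to still be big and nef over $Z$, or at least to have the numerical positivity required by the classical statement, which is exactly where the very $g$-exceptional hypothesis must be used to show $N$ contributes nothing negative to intersection numbers with curves in the fibers over $o$.

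Finally, with $(X,D^{\geq 0})$ an snc pair whose boundary is effective and with $-(K_X+D^{\geq 0})$ big and nef over $Z$, I would invoke the classical Connectedness Lemma \cite[5.7]{Sh93}, \cite[Theorem 17.4]{KA92} directly: it gives that the locus $\lceil D^{\geq 0}\rceil$ (the reduced divisor of coefficient-one components, which equals $\LCS(X,D^{\geq 0})$ in the snc case) is connected in a neighborhood of $g^{-1}(o)$, and then I would push this connectedness back down and back across the two reductions above.

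The main obstacle I expect is handling the negative part $N$ correctly: one must verify both that removing $N$ does not alter $\LCS$ near $g^{-1}(o)$ (which requires knowing that no lc center is "hidden" inside the very exceptional locus in a way that disconnects things) and that the positivity of $-(K_X+D)$ survives the passage to $-(K_X+D^{\geq 0})$, since $-N$ is being subtracted. The cleanest route is probably not to remove $N$ at all but to run the Kawamata–Viehweg vanishing argument from \cite[Theorem 17.4]{KA92} directly with the negative coefficients in place, using condition (1) precisely to ensure $g_*\Oo_X(\lceil -D^{<0}\rceil) = \Oo_Z$ so that the $H^0$-surjectivity argument proving connectedness of $\LCS$ goes through verbatim; I would present the proof in that form, citing the structure of the classical argument and only indicating the one place where hypothesis (1) replaces the usual effectivity assumption.
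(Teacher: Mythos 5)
There is a genuine gap, and it is exactly at the one point where this lemma differs from the classical statement you are citing. The classical Connectedness Lemma (\cite[5.7]{Sh93}, \cite[Theorem 17.4]{KA92}) is stated for a \emph{contraction}, i.e.\ a proper morphism with $g_*\Oo_X=\Oo_Z$ (equivalently, connected fibers), and it already allows negative coefficients on (very) exceptional components. The hypothesis here is weaker: $g$ is only a proper local morphism whose \emph{central} fiber $g^{-1}(o)$ happens to be connected; generically $g$ need not have connected fibers. Your ``cleanest route'' --- running the Kawamata--Viehweg $H^0$-surjectivity argument verbatim and using condition (1) to get $g_*\Oo_X(\lceil -D^{<0}\rceil)=\Oo_Z$ --- breaks down precisely there: if $g$ is not a contraction then $g_*\Oo_X$ is already strictly larger than $\Oo_Z$, so the claimed identity is false and the surjection onto $g_*\Oo_{\LCS}(\cdot)$ no longer forces connectedness over $o$. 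The paper's entire proof is the missing step: take the Stein factorization $X\to Z'\to Z$, observe that connectedness of $g^{-1}(o)$ forces the fiber of the finite map $Z'\to Z$ over $o$ to be a single point $o'$, and then apply the classical lemma to the contraction $X\to Z'$ near $o'$, whose fiber coincides set-theoretically with $g^{-1}(o)$. Nothing else is needed; in particular the log resolution and the treatment of the negative part are already part of the cited result, not something to be redone.

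A secondary problem: your intermediate reduction (blowing up so that $N$ and the boundary part have disjoint support and replacing $D$ by $D^{\geq 0}$) is not viable, as you yourself half-concede. Discarding $N$ changes $-(K_X+D)$ by $-N$, which destroys nefness in general, and the claim that $\LCS(X,D)=\LCS(X,D^{\geq 0})$ near the fiber after a further blowup is unjustified (discrepancies depend on $N$ away from its support once you pull back). Since you abandon this route in the last paragraph anyway, the fix is simply to delete it and replace the whole argument by the Stein factorization reduction above.
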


\begin{proof}
Let $X\to Z\,'\to Z$ be the Stein factorization of $g$. Since the central fiber $g^{-1}(o)$ is connected and $X\to Z\,'$ has connected fibers, the fiber of $Z\,'\to Z$ over $o$ is a single point $o\,'$. Then we can apply the well-known connectedness lemma to the contraction $X\to Z\,'$, and conclude that $\LCS(X,D)$ is connected near the fiber of $X\to Z\,'$ over $o\,'$, which is the same as $g^{-1}(o)$ set-theoretically. 
\end{proof}

The connectedness principle might not work for pairs $(X/Z\ni o, D)$ without the big assumption. However, we have a generalization of \ref{connectedness-lemma} for surface 0-pairs:

\begin{lemma}\label{LCS-connected-general}
(\cite[Theorem 6.9]{Sh93}). Let $X$ be a surface, $g:X\to Z\ni o$ a proper local morphism with connected central fiber $g^{-1}(o)$, $D$ an effective divisor on $X$ such that $K_X+D\equiv0$. If $\LCS(X,D)$ is disconnected in a neighborhood of $g^{-1}(o)$, then
\begin{enumerate}
\item $(X,D)$ is plt in a neighborhood of $g^{-1}(o)$, and
\item $\LCS(X,D)\cap g^{-1}(o)$ has two connected components.
\end{enumerate}
\end{lemma}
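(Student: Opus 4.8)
The plan is to reduce the statement to the classical connectedness lemma (Lemma \ref{connectedness-lemma}) by running a suitable minimal model program that makes $-(K_X+D)$ big and nef, and then carefully analyzing how $\LCS$ behaves under this process. Since $K_X+D\equiv 0$, the divisor $-(K_X+D)$ is already nef; the issue is bigness, so the content is genuinely in the degenerate case $(K_X+D)\equiv 0/Z$ where the fibration $X\to Z\ni o$ may have positive-dimensional fibers on which $K_X+D$ is numerically trivial. First I would pass to a dlt modification $(Y,D_Y)\to (X,D)$ so that $\LCS(Y,D_Y)$ is a divisor whose connectedness in a neighborhood of the fiber is equivalent to that of $\LCS(X,D)$ (since $\LCS$ is preserved under such crepant pullbacks and the central fiber stays connected). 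So without loss of generality we may assume $(X,D)$ is dlt.

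Next, assume for contradiction that $\LCS(X,D)\cap g^{-1}(o)$ is disconnected; I want to show $(X,D)$ is plt near $g^{-1}(o)$ and that there are exactly two components. The strategy is to separate the disconnected pieces of $\LCS$ by running a $(K_X+D-\epsilon S)$-MMP over $Z$, or more precisely a relative MMP that extracts the ``neutral'' part of the fibration: since $X$ is a surface, $-(K_X+D)\equiv 0/Z$ and $X\to Z$ is a contraction of relative dimension at most one, the cone of curves over $Z$ is spanned by finitely many extremal rays all $(K_X+D)$-trivial. Running MMP on a small perturbation $-(K_X+D+\delta A)$ for a suitable ample-over-$Z$ divisor $A$ contracts exactly the curves in fibers; after finitely many steps we reach a model $(X',D')$ over $Z$ on which either $-(K_{X'}+D')$ is big and nef over $Z$ — in which case Lemma \ref{connectedness-lemma} applies directly and gives a contradiction with disconnectedness — or $X'\to Z$ has a Mori fiber space structure $X'\to T/Z$ with $-(K_{X'}+D')$ ample over $T$. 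In the latter case I would apply the connectedness lemma on the generic fiber of $X'\to T$ (a curve, or a point) and then use properness and the structure of a conic bundle / del Pezzo fibration over the curve $T$ to deduce that $\LCS$ meets each fiber in a connected set, unless $(X',D')$ is plt there with the two components being two sections (or two multisections degenerating appropriately) over $T$.

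The main obstacle I anticipate is the bookkeeping needed to track $\LCS$ through the MMP and to pin down precisely the geometry forcing ``exactly two components'' plus ``plt'' rather than, say, three components or a non-plt point. The cleanest route is likely to localize the problem to a minimal counterexample and then do a direct surface classification: because $\dim X = 2$, after a dlt modification a disconnected $\LCS$ near a connected fiber forces a structure where two $\LCS$-components are separated by a chain of $K_X+D$-trivial rational curves, and a Hodge-index / intersection-form argument on the fiber (using $(K_X+D)\equiv 0$ and the negative definiteness of contracted configurations) shows that there is no room for a third component and that any worse-than-plt singularity would create extra $\LCS$ in the fiber, contradicting disconnectedness. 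So the heart of the proof is the surface-specific intersection-theory argument on the central fiber; the MMP is just the tool that reduces the general degenerate $0$-pair to a normal form where that argument can be run. This is essentially the content of \cite[Theorem 6.9]{Sh93}, so I would follow that classification, reorganizing it to fit the relative local setting $X/Z\ni o$ and the connectedness-of-central-fiber hypothesis, invoking Lemma \ref{connectedness-lemma} whenever bigness is recovered.
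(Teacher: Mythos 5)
The paper does not prove this lemma at all: it is quoted verbatim from \cite[Theorem 6.9]{Sh93} (see also the pointers in Remark 1 to \cite{HH18} and \cite{Birkar22}), so there is no in-paper argument to compare against. Judged on its own terms, your proposal is a strategy outline rather than a proof. The two conclusions that actually constitute the lemma --- that $(X,D)$ is plt near $g^{-1}(o)$ and that $\LCS(X,D)\cap g^{-1}(o)$ has \emph{exactly} two components --- are precisely the steps you leave as an unspecified ``Hodge-index / intersection-form argument,'' and you end by saying you would ``follow that classification'' from \cite[Theorem 6.9]{Sh93}, i.e.\ you defer the content of the statement to the reference being cited. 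That is a genuine gap, not bookkeeping.

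There is also a concrete misstep in the reduction you do spell out. Running an MMP for $K_X+D+\delta A$ with $A$ ample over $Z$ accomplishes nothing: since $K_X+D\equiv 0/Z$, that divisor is ample over $Z$ for $\delta>0$, so no extremal ray is negative and no curve gets contracted; and ``running MMP on $-(K_X+D+\delta A)$'' is not a defined operation. The perturbation that makes the standard argument work is the one you mention only in passing and then abandon: after a dlt modification write the two pieces of $\LCS$ near the fiber as $S_1\sqcup S_2\subset\lf D\rf$ and run the $(K_X+D-\epsilon S_1)$-MMP over $Z$. Because $K_X+D-\epsilon S_1\equiv-\epsilon S_1$ is not pseudo-effective over $Z$, this terminates in a Mori fiber space $X'\to T$ on whose fibers $S_1$ is positive; the degree count $\deg(D'|_f)=2$ on a general fiber $f\simeq\Pp^1$ then forces $S_1$ and $S_2$ to be disjoint sections meeting $f$ with multiplicity one each and the remaining part of $D'$ to be disjoint from them, after which adjunction/inversion of adjunction along $S_1$ and $S_2$ gives pltness and rules out a third component. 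Until that chain of deductions (and its transport back through the dlt modification and the MMP) is actually written out, the proposal does not establish the lemma.
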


\begin{remark}\label{rem}
\textup{For a detailed discussion on Lemma \ref{LCS-connected-general} in higher dimensions and for pairs with $-(K_X+D)$ nef, see \cite[Theorem 1.2]{HH18} or \cite[Theorem 1.2]{Birkar22}.}
\end{remark}

\subsection{Inversion of Adjunction}

In this section we recall the fact that inversion of adjunction holds for log canonical surface pairs, which is Lemma \ref{inversion-adjunction-general} below.

\begin{lemma}\label{inversion-adjunction-general}
 Let $(X,S+D)$ be a log pair with $X$ a smooth surface, $S\subset X$ a smooth curve, and $D$ an effective $\Rr$-divisor on $X$ such that $S\not\subset\Supp D$. Then $(S,D|_S)$ is lc if and only if $(X,S+D)$ is lc in a neighborhood of $S$.   
\end{lemma}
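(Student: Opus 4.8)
The statement is an equivalence, and I would prove the two implications by quite different methods.

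\emph{Adjunction direction} ($(X,S+D)$ lc near $S$ $\Rightarrow$ $(S,D|_S)$ lc). Take a log resolution $f\colon Y\to X$ of $(X,S+D)$ such that $f^{-1}(\Supp(S+D))$ together with the strict transform $\widetilde S$ of $S$ is simple normal crossing; since $S$ is a smooth curve on the smooth surface $X$, $f|_{\widetilde S}\colon\widetilde S\to S$ is an isomorphism. Write $K_Y+B_Y=f^*(K_X+S+D)$; by hypothesis every component of $B_Y$ meeting $f^{-1}(S)$ has coefficient $\le 1$. Adjunction on $Y$ gives $(K_Y+B_Y)|_{\widetilde S}=K_{\widetilde S}+(B_Y-\widetilde S)|_{\widetilde S}$, and since $B_Y$ is simple normal crossing there is at most one component of $B_Y$ other than $\widetilde S$ through any point of $\widetilde S$, so $(B_Y-\widetilde S)|_{\widetilde S}$ has all coefficients $\le 1$. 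On the other hand $(K_Y+B_Y)|_{\widetilde S}=(f|_{\widetilde S})^*\bigl((K_X+S+D)|_S\bigr)=(f|_{\widetilde S})^*(K_S+D|_S)$, because the different $\Diff_S(D)$ equals $D|_S$ ($X$ and $S$ both smooth). Transporting through the isomorphism $f|_{\widetilde S}$, the coefficients of $D|_S$ are $\le 1$, i.e. $(S,D|_S)$ is lc.

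\emph{Inversion direction} ($(S,D|_S)$ lc $\Rightarrow$ $(X,S+D)$ lc near $S$). Working in a neighbourhood of $S$ we may discard the components of $D$ disjoint from $S$; then $(S,D|_S)$ lc forces every coefficient of $D$ to be $\le 1$, since a component with coefficient $>1$ would meet $S$ and violate $(D\cdot S)_p\le 1$. Hence no component of $S+D$ has coefficient $>1$, so the non-lc locus of $(X,S+D)$ near $S$ contains no curve and is a finite set of points. Arguing by contradiction, fix $p\in S$ at which $(X,S+D)$ is not lc; then $(X,S+D)$ is lc in a punctured neighbourhood of $p$. Put $t_0:=\sup\{t\in[0,1]\mid(X,S+tD)\text{ is lc at }p\}$. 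Since $(X,S+D)$ is not lc at $p$ and the set above is closed, $t_0<1$; and $t_0>0$ because the only log canonical place of $(X,S)$ whose center meets $S$ is $\operatorname{ord}_S$, along which $D$ has multiplicity $0$ (equivalently, by a single blow-up computation). By construction $(X,S+t_0D)$ is lc near $p$ and carries a prime divisor $E$ over $p$ with $a(E,S+t_0D,X)=-1$.

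Now take a log resolution $f\colon Y\to X$ of $(X,S+t_0D)$ over a neighbourhood of $p$ on which $E$ appears, and set $K_Y+B_Y=f^*(K_X+S+t_0D)$, so $-(K_Y+B_Y)\sim_{\Rr}0/X$ — in particular big and nef over $X$ since $f$ is birational — and the negative part of $B_Y$ is $f$-exceptional, hence very $f$-exceptional. The Connectedness Lemma \ref{connectedness-lemma} then shows $\LCS(Y,B_Y)$ is connected near the fibre $F:=f^{-1}(p)$. This locus contains $E\subseteq F$ (coefficient $1$) and $\widetilde S$ (coefficient $1$), which meets $F$ in the single point $q:=\widetilde S\cap F$ because $f|_{\widetilde S}$ is an isomorphism; so there is a chain of coefficient-$1$ components of $B_Y$ joining $\widetilde S$ to $E$ inside every neighbourhood of $F$. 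Since $\widetilde S$ lies entirely in $\LCS(Y,B_Y)$, shrinking the neighbourhood and intersecting forces the first component $C_1\neq\widetilde S$ of this chain to pass through $q$ and to be $f$-exceptional over $p$. Finally adjunction to $\widetilde S$ gives $K_{\widetilde S}+\Diff=(f|_{\widetilde S})^*(K_S+t_0D|_S)$ with $\Diff=(B_Y-\widetilde S)|_{\widetilde S}$, so the coefficient of $\Diff$ at $q$ equals $t_0(D\cdot S)_p$; but $(Y,B_Y)$ is simple normal crossing, so that coefficient also equals $1\cdot(C_1\cdot\widetilde S)_q=1$. Hence $(D\cdot S)_p=1/t_0>1$, contradicting that $(S,D|_S)$ is lc. The step I expect to be most delicate is exactly this passage from abstract connectedness of $\LCS(Y,B_Y)$ near $F$ to the concrete conclusion that the coefficient-$1$ chain leaves $\widetilde S$ precisely at $q$ through an $f$-exceptional divisor lying over $p$: one must handle that $\widetilde S$ lies entirely in $\LCS$ (so nearby points of $\widetilde S$ are not automatically isolated in it), that a chain component could a priori be $f$-exceptional over a different point of $S$, and that the Connectedness Lemma is being applied to $Y/X\ni p$ with all hypotheses verified — in particular that every $f$-exceptional prime divisor is very $f$-exceptional and that $-(K_Y+B_Y)$ qualifies as big and nef over $X$.
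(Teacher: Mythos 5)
The paper does not actually prove Lemma \ref{inversion-adjunction-general}: it is quoted as a known fact, with Remark \ref{inversion-of-adjunction} pointing to \cite{Kaw06} and \cite[Theorem 5.50]{KM98}. So there is no in-paper argument to match yours against; what you have written is essentially the classical Shokurov--Koll\'ar proof of inversion of adjunction (via the lc threshold and the Connectedness Lemma), specialized to smooth surfaces, and it is correct. The adjunction direction is the standard crepant-pullback computation and is fine. In the inversion direction, the one step you flag as delicate does go through, and for the reason you hint at: after shrinking the neighborhood $U$ of $F=f^{-1}(p)$, the only coefficient-one components of $B_Y$ meeting $U$ are $\widetilde S$ and exceptional divisors lying over $p$ --- the strict transforms of components of $D$ all have coefficient $t_0d_i\le t_0<1$, and exceptional divisors over points $p'\neq p$ have fibers disjoint from $F$, hence miss a small enough $U$. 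Since exceptional divisors over $p$ can only meet $\widetilde S$ inside $\widetilde S\cap F=\{q\}$, connectedness of $\LCS(Y,B_Y)\cap U$ forces some coefficient-one exceptional $C_1$ through $q$, and the snc hypothesis makes $C_1$ the unique component of $\Supp B_Y-\widetilde S$ through $q$, so the different at $q$ is exactly $1=t_0(D\cdot S)_p$, giving your contradiction. Two cosmetic points you may as well make explicit: the non-lc locus is closed, so if it avoided $S$ there would be nothing to prove (justifying the choice $p\in S$); and $t_0>0$ is cleanest via a single fixed log resolution of $(X,S+\Supp D)$, on which the discrepancies are affine in $t$ and strictly above $-1$ for all exceptional divisors at $t=0$ because $(X,S)$ is plt. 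An alternative the paper's references follow is to deduce the surface case from general inversion of adjunction; your argument is self-contained and adequate for the use made of the lemma here.
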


The following is a special generalization of Lemma \ref{inversion-adjunction-general} which we'll use in the proof of Theorem \ref{big-nef-case} later.

\begin{corollary}\label{inversion-adjunction-special-case}
 Let $(X,S+D)$ be a log pair with $X$ a smooth surface, $S\subset X$ a smooth curve, $D=\sum d_iD_i$ an $\Rr$-divisor on $X$ such that $S\not\subset\Supp D$, and $D_i.S=0$ for all $i$ with $d_i<0$. Then if $(S,D|_S)$ is lc, $(X,S+D)$ is lc in a neighborhood of $S$.   
\end{corollary}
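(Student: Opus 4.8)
The plan is to reduce to the genuine inversion of adjunction statement in Lemma \ref{inversion-adjunction-general} by discarding the negative part of $D$. Write $D = D_{\geq 0} - N$ where $D_{\geq 0} = \sum_{d_i > 0} d_i D_i$ is the effective part and $N = \sum_{d_i < 0} (-d_i) D_i$ is the (effective) negative part; note $N \geq 0$ and by hypothesis $N.S = 0$, i.e. $\Supp N$ is disjoint from $S$. First I would observe that since $S \not\subset \Supp D$ and $\Supp N \cap S = \emptyset$, we have $D_{\geq 0}|_S = D|_S + N|_S = D|_S$ as divisors on $S$ (the restriction of $N$ to $S$ is the zero divisor). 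Hence $(S, D_{\geq 0}|_S) = (S, D|_S)$ is lc. Now $D_{\geq 0}$ is an effective $\Rr$-divisor with $S \not\subset \Supp D_{\geq 0}$, so Lemma \ref{inversion-adjunction-general} applies directly and gives that $(X, S + D_{\geq 0})$ is lc in a neighborhood of $S$.

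The remaining step is to pass from $(X, S + D_{\geq 0})$ being lc near $S$ back to $(X, S + D)$ being lc near $S$. Since $D \leq D_{\geq 0}$, in a first pass one gets for free that $(X, S+D)$ is sub-lc near $S$; but we actually want honest lc, and more importantly we want to rule out that subtracting $N$ creates nothing bad — which it cannot, since lowering coefficients only increases discrepancies. The cleaner way: shrink $X$ to a neighborhood $U$ of $S$ on which $N = 0$. This is possible precisely because $\Supp N$ is a closed subset disjoint from the closed curve $S$, so $U = X \setminus \Supp N$ is an open neighborhood of $S$, and on $U$ we have $D|_U = D_{\geq 0}|_U$. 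Therefore $(X, S+D)|_U = (X, S + D_{\geq 0})|_U$, which is lc by the previous paragraph (after further shrinking to the neighborhood of $S$ where Lemma \ref{inversion-adjunction-general} gives lc-ness). This establishes that $(X, S+D)$ is lc in a neighborhood of $S$.

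I do not expect a serious obstacle here: the only subtlety is the bookkeeping that $N.S = 0$ together with $N$ effective on a smooth (hence in particular normal, and locally the curve $S$ is irreducible or a disjoint union of smooth components) surface forces $\Supp N \cap S = \emptyset$, rather than merely $N$ and $S$ meeting in a way that contributes zero intersection — on a smooth surface two effective divisors with no common component have nonnegative local intersection multiplicities summing to the intersection number, so $N.S = 0$ with $S \not\subset \Supp N$ indeed yields disjoint supports. With that in hand the reduction to Lemma \ref{inversion-adjunction-general} is immediate, since the whole statement is local near $S$ and we may freely delete the part of $X$ carrying the negative coefficients.
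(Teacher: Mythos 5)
Your proposal is correct and follows essentially the same route as the paper: restrict only the effective part $D_{\geq 0}$ to $S$ (which agrees with $D|_S$ since the negative components miss $S$), apply Lemma \ref{inversion-adjunction-general} to $(X,S+D_{\geq 0})$, and then descend to $(X,S+D)$ using $D\leq D_{\geq 0}$. Your extra remarks on why $N.S=0$ forces disjoint supports and on shrinking to a neighborhood where $N=0$ are just slightly more careful bookkeeping of the same argument.
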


\begin{proof}
Since $D_{\geq0}|_S=D|_S$ by our assumption where $D_{\geq0}=\sum_{d_i>0}d_iD_i$, we can apply Lemma \ref{inversion-adjunction-general} to the pair $(X,S+D_{\geq0})$. Hence $(X,S+D_{\geq0})$ is lc in a neighborhood of $S$. Since $D\leq D_{\geq0}$, $(X,S+D)$ is also lc in a neighborhood of $S$.
\end{proof}

\begin{remark}\label{inversion-of-adjunction}
\textup{One can find a more general version of Lemma \ref{inversion-adjunction-general} in higher dimensions and for a wider range of varieties $X$ in \cite{Kaw06}. For inversion of adjunction in the log terminal case, see \cite[Theorem 5.50]{KM98}.}
\end{remark}

\subsection{Minimal Dlt Modification}

In some of our proofs, we need to lift lc pairs to a higher model with better singularities. That's where Minimal Dlt Modification of a lc pair as follows comes in handy.

\begin{proposition}\label{dlt-modification}
(\cite[Proposition 3.1.2]{Pr01}). Let $(X,D)$ be a lc normal surface pair where $D$ is a boundary. Then there exists a blow-up, namely the \textbf{minimal dlt modification} $f:Y\to X$ such that
\begin{enumerate}
\item $f^*(K_X+D)=K_Y+D_Y+\sum E_i$, where $D_Y$ is the birational transform of $D$ and $E_i$ are all the exceptional divisors over $X$;
\item $(Y,D_Y+\sum E_i)$ is dlt;
\item $Y$ is $\Qq$-factorial.
\end{enumerate}
\end{proposition}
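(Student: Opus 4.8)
The plan is to produce $Y$ by a log minimal model program over $X$. First I would choose a log resolution $g\colon W\to X$ of $(X,D)$ and put $\Gamma=g_*^{-1}D+\sum_j F_j$, the birational transform of $D$ together with \emph{all} $g$-exceptional prime divisors $F_j$, each with coefficient $1$; then $(W,\Gamma)$ is log smooth, hence dlt, $\Gamma$ is a boundary, and $W$ is $\Qq$-factorial. Writing $K_W+\Gamma=g^*(K_X+D)+E$, the divisor $E=\sum_j a_j F_j$ is effective and $g$-exceptional, since each $a_j$ is the log discrepancy of $F_j$ over $(X,D)$ and hence $\geq 0$ by the lc hypothesis; moreover $\Supp E$ is precisely the union of those $F_j$ that are not lc places of $(X,D)$.

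Next I would run the $(K_W+\Gamma)$-MMP over $X$. By the standard theory of log MMP for surfaces this is a finite chain of divisorial contractions $W=W_0\dashrightarrow\cdots\dashrightarrow W_N=Y$ — no flips occur, and no Mori fibre spaces either since the program is birational over $X$, while the relative Picard number drops at each step, forcing termination — and every step preserves both dlt-ness and $\Qq$-factoriality. Writing $f\colon Y\to X$ for the induced projective birational morphism, $K_Y+\Gamma_Y$ is nef over $X$; by the projection formula $K_Y+\Gamma_Y=f^*(K_X+D)+E_Y$ with $E_Y\geq 0$ the pushforward of $E$, still $f$-exceptional. Then $E_Y=(K_Y+\Gamma_Y)-f^*(K_X+D)$ is a difference of an $f$-nef and an $f$-trivial divisor, so it is $f$-nef; since an effective $f$-exceptional $f$-nef divisor on a surface must vanish (negativity lemma), $E_Y=0$ and $f^*(K_X+D)=K_Y+\Gamma_Y$. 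This gives (1) once $\Gamma_Y$ is identified, and (2), (3) are exactly the two preservation statements quoted above.

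The delicate point, and the one I expect to take the most care, is to check that the divisors contracted by this MMP are precisely the $F_j$ with $a_j>0$, so that $\Gamma_Y$ equals $D_Y+\sum E_i$ with the $E_i$ ranging over \emph{all} the $f$-exceptional divisors. At each step the contracted curve $C$ is exceptional over $X$ and satisfies $(K_{W_k}+\Gamma_k)\cdot C<0$; as the pullback of $K_X+D$ meets $C$ trivially, this forces $E_k\cdot C<0$, hence $C\subset\Supp E_k$. Because pushforward does not alter the coefficients $a_j$, the support of $E_k$ only ever meets transforms of the $F_j$ with $a_j>0$, so no lc-place divisor (those $F_j$ with $a_j=0$) is ever the contracted curve; for surfaces the exceptional divisor of a divisorial step is that curve, so each such $F_j$ persists on $Y$ with coefficient $1$. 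Conversely $E_Y=0$ forces every $F_j$ with $a_j>0$ to be contracted. Hence the $f$-exceptional divisors of $Y$ are exactly the birational transforms of the $g$-exceptional lc places of $(X,D)$, all occurring in $\Gamma_Y$ with coefficient $1$, so $\Gamma_Y=D_Y+\sum E_i$ as required. Apart from this bookkeeping, the only thing to be careful about is invoking the correct statements that surface log MMP preserves dlt singularities and $\Qq$-factoriality.
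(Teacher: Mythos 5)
The paper does not prove this proposition at all; it is quoted directly from Prokhorov's \emph{Lectures on complements on log surfaces} (Proposition 3.1.2), so there is no in-paper argument to compare against. Your construction --- log resolution, coefficient-one boundary on the resolution, relative $(K_W+\Gamma)$-MMP over $X$, negativity lemma to kill $E_Y$, and the bookkeeping showing that exactly the non-lc-places get contracted so that every $f$-exceptional divisor on $Y$ is an lc place appearing with coefficient $1$ --- is the standard proof and is correct; it is essentially the argument in the cited source. The only inputs you are taking on faith (existence of extremal contractions for dlt surface pairs, and preservation of dlt-ness and $\Qq$-factoriality under divisorial contractions) are indeed standard, so no gap.
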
 

\subsection{Zariski Decomposition}

Traditionally Zariski decomposition are defined for effective divisors on nonsingular surfaces, on which the existence and uniqueness of Zariski decomposition is purely a linear algebraic result. There are generalizations of this classical concept to a wider range of algebraic varieties and divisors in higher dimensions, and here in this section we recall Zariski decomposition for pseudo-effective divisors on normal $\Qq$-factorial surfaces in the relative setting (\cite[Section 2]{Pr02}), where the 2-dimensional intersection theory works well. 

\begin{definition}\label{Zar-definition}
\textup{Let $X/Z$ be a relative projective normal $\Qq$-factorial surface, and $D$ be any divisor on $X$. Then a decomposition $D=M+F$ is called a} Zariski decomposition \textup{of $D$ if the following holds:
\begin{enumerate}
\item $F=\sum f_iD_i$, where $D_i$ are distinct vertical prime divisors over $Z$, and $f_i>0$;
\item $M$ is nef over $Z$;
\item the intersection matrix $(d_{ij})_{i\times j}$ is negative definite, where $d_{ij}=D_i.D_j$;
\item $M.D_i=0$ for all $i$.
\end{enumerate}
If such a decomposition exists and is unique, we usually call $M$ the \textit{mobile part} and $F$ the \textit{fixed part} of $D$.}
\end{definition}

\begin{remark}\label{high-dimension}
\textup{See \cite{Pr02} for a generalization of Zariski decomposition to higher dimensions.}
\end{remark}

It is well-known that Zariski decomposition exists and is unique for surface pairs:
\begin{theorem}\label{Zar}
(\cite[Theorem 2.2]{Pr02}). Let $X/Z$ be a projective relative normal $\Qq$-factorial surface and $D$ be a pseudo-effective divisor on $X$. Then there exists a unique Zariski decomposition $D=M+F$ of $D$.
\end{theorem}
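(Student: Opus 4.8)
The plan is to prove uniqueness first, and then construct the decomposition by a finite iteration which repeatedly enlarges a candidate fixed part.

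For uniqueness, suppose $D=M_1+F_1=M_2+F_2$ are two Zariski decompositions in the sense of Definition \ref{Zar-definition}. Then $M_1-M_2=F_2-F_1$ is an $\Rr$-divisor supported on the (finitely many) vertical prime divisors occurring in $F_1$ or $F_2$. Using that each $M_i$ is nef over $Z$ and that $M_i\cdot D=0$ for every component $D$ of $F_i$ (condition (4)), one computes
$$(M_1-M_2)^2=(M_1-M_2)\cdot M_1-(M_1-M_2)\cdot M_2=F_2\cdot M_1+F_1\cdot M_2\geq 0,$$
while the intersection form on $\Rr$-divisors supported on vertical curves is negative semidefinite by the relative Hodge index theorem, so $(M_1-M_2)^2\leq 0$. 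Hence $(M_1-M_2)^2=0$ and $F_2\cdot M_1=F_1\cdot M_2=0$; as all coefficients of the $F_i$ are positive and $M_i$ is nef, this forces $M_i\cdot D=0$ for every $D$ in the combined support, so $M_1-M_2$ lies in the radical of the intersection form restricted to the span of that support. Combining $(M_1-M_2)^2=0$ with the description of this radical (spanned by the classes of connected components of the fibers of $X/Z$ over $o$) and with the fact that neither $\Supp F_1$ nor $\Supp F_2$ can contain a whole connected fiber (their intersection matrices being negative definite), one concludes $F_1=F_2$, hence $M_1=M_2$.

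For existence, I keep at each stage a finite set $T$ of vertical prime divisors whose intersection matrix $A_T=(D_i\cdot D_j)_{D_i,D_j\in T}$ is negative definite; since $A_T$ is invertible there is a unique $F_T=\sum_{D_i\in T}f_iD_i$ with $(D-F_T)\cdot D_j=0$ for all $D_j\in T$, and I set $M_T:=D-F_T$. The off-diagonal entries of $A_T$ are nonnegative, so $-A_T$ is a Stieltjes matrix and $A_T^{-1}$ has nonpositive entries; this $M$-matrix property will keep coefficients under control. Start with $T=\emptyset$, $F_\emptyset=0$, $M_\emptyset=D$. If $M_T$ is nef over $Z$, stop; otherwise (since $g$ is a proper local morphism, failure of nefness is witnessed by a $g$-contracted curve) there is a vertical prime divisor $C\notin T$ with $M_T\cdot C<0$, and I replace $T$ by $T\cup\{C\}$. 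Writing the new datum as $F_{T\cup\{C\}}=F_T+G$, the increment $G$ is the unique $\Rr$-divisor supported on $T\cup\{C\}$ with $G\cdot D_j=0$ for $D_j\in T$ and $G\cdot C=M_T\cdot C<0$; the $M$-matrix property applied to $T\cup\{C\}$ shows that $G$ has nonnegative coefficients, so $F_{T\cup\{C\}}\geq F_T\geq 0$, and one checks directly that $M_{T\cup\{C\}}\cdot D=0$ for every $D\in T\cup\{C\}$. Thus the invariants ``$T$ negative definite, $F_T\geq 0$, $M_T$ orthogonal to $T$'' are preserved.

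The main obstacle is to verify that $T\cup\{C\}$ is again negative definite, so that the next step is well posed. Since $M_T$ is orthogonal to $T$ but $M_T\cdot C<0$, the class of $C$ is not in the span of $T$, so that span grows by one dimension; the form there is negative semidefinite by the relative Hodge index theorem, and it could fail to be negative definite only if $\operatorname{span}(T\cup\{C\})$ met the radical, i.e. contained the class of a connected component $G_0$ of a fiber of $X/Z$ over $o$. In the local setting this forces $T\cup\{C\}$ to be exactly the set of components of $G_0$, so that, writing $G_0=\sum m_iD_i+m_CC$ with the $D_i\in T$, one gets $M_T\cdot G_0=\sum m_i(M_T\cdot D_i)+m_C(M_T\cdot C)=m_C(M_T\cdot C)<0$; but $G_0$ is nef over $Z$ and $F_T$ is vertical, so $M_T\cdot G_0=D\cdot G_0-F_T\cdot G_0=D\cdot G_0\geq 0$ because $D$ is pseudo-effective over $Z$ — a contradiction. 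Hence $T\cup\{C\}$ is negative definite. Finally, a negative definite set of divisor classes has at most $\rho(X/Z)$ elements, so after finitely many steps $M_T$ is nef over $Z$; discarding from $T$ any components with zero coefficient, the resulting $D=M+F$ satisfies all of conditions (1)--(4) of Definition \ref{Zar-definition}, and it is unique by the first paragraph.
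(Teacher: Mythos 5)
The paper itself gives no proof of this statement (it is quoted from Prokhorov), so your argument has to stand on its own. The skeleton of your existence proof is the standard and correct one: enlarge a negative definite set $T$ one curve at a time, solve $A_Tf=(D\cdot D_j)_j$, and use the Stieltjes/M-matrix property of $-A_T$ to keep the increment $G$ effective; your verification that $M_{T\cup\{C\}}$ is orthogonal to $T\cup\{C\}$ and that $F_T$ increases is fine. The gap is in the step you yourself single out as the main obstacle. You justify that $T\cup\{C\}$ stays negative definite by asserting that the intersection form on the span of the vertical prime divisors is negative semidefinite ``by the relative Hodge index theorem,'' with radical spanned by connected fibers. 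That is true when $X\to Z$ is birational (Mumford) or a fibration onto a curve (Zariski's lemma), but Theorem \ref{Zar} is also invoked in this paper with $Z$ a point (Lemma \ref{semi-ampleness} and Step 4 of the proof of Theorem \ref{main}), where every prime divisor is ``vertical,'' the span in question is all of $N^1(X)$ with signature $(1,\rho-1)$, and there is no radical of the kind you describe. In that absolute case your dichotomy collapses, and this is precisely where the hard content of the classical theorem lies: from the data you have it is not even clear that the new curve $C$ with $M_T\cdot C<0$ satisfies $C^2<0$, let alone that adjoining it cannot introduce a positive direction. So as written you have proved the genuinely relative cases but not the global one, which is the case the citation is really needed for.

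The uniqueness paragraph leans on the same semidefiniteness claim twice (for $(M_1-M_2)^2\le 0$ and for the analysis of the radical), but there the repair is easy and you should make it: write $F_2-F_1=G_+-G_-$ with $G_\pm\ge 0$ sharing no component, so that $G_+\le F_2$, $G_-\le F_1$, hence $(F_2-F_1)^2=G_+^2+G_-^2-2G_+\cdot G_-\le 0$ using only the negative definiteness of each $\Supp F_i$; combined with your (correct) computation $(M_1-M_2)^2=F_2\cdot M_1+F_1\cdot M_2\ge 0$ this forces $G_+^2=G_-^2=0$ and hence $G_\pm=0$. Alternatively, uniqueness is immediate from Proposition \ref{mobile-part-biggest-nef}, which the paper does prove: each $M_i$ is nef over $Z$ with $M_i\le D$, so $M_1\le M_2$ and $M_2\le M_1$.
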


\begin{proposition}\label{mobile-part-biggest-nef}
(\cite[Proposition 2.5]{Pr02}). Assumptions the same as in Theorem \ref{Zar} and let $L$ be a nef divisor over $Z$. If $L\leq D$, we have $L\leq M$.
\end{proposition}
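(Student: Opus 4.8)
The plan is a contradiction argument resting on the negative-definiteness of the intersection matrix of the fixed part, much as in the uniqueness half of Theorem \ref{Zar}. I would set $N := M - L$, so that the assertion $L \le M$ becomes $N \ge 0$, write $N = N_+ - N_-$ with $N_+, N_- \ge 0$ effective $\Rr$-divisors having no common prime component, and suppose toward a contradiction that $N_- \ne 0$.

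The first substantive step is to show that $N_-$ is supported on the fixed-part curves $D_i$. Since $L \le D$ we have $D - L \ge 0$, and $D - L = N + F = (N_+ + F) - N_-$, where $F = \sum f_i D_i$ is supported on the vertical primes $D_i$ with $f_i > 0$. Comparing, for a prime divisor $P \notin \{D_i\}$, the coefficient of $P$ in $D - L$ equals $(\text{coeff of }P\text{ in }N_+) - (\text{coeff of }P\text{ in }N_-)$ and is $\ge 0$; since $N_+$ and $N_-$ share no component, the coefficient of $P$ in $N_-$ must vanish. Hence $N_- = \sum c_i D_i$ with $c_i \ge 0$ not all zero; in particular $N_-$ is effective and vertical over $Z$, proper over a finite set of points, so all the intersection numbers involving it below are defined.

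Next I would compute $N \cdot N_-$ in two ways. On one side, $M \cdot N_- = \sum_i c_i\,(M \cdot D_i) = 0$ by Definition \ref{Zar-definition}(4), and $L \cdot N_- \ge 0$ since $L$ is nef over $Z$ and $N_-$ is effective and vertical, so $N \cdot N_- = M \cdot N_- - L \cdot N_- \le 0$. On the other side, distinct prime components of the effective divisors $N_+$ and $N_-$ meet non-negatively, hence $N_+ \cdot N_- \ge 0$, whereas $N_-^2 = \sum_{i,j} c_i c_j\,(D_i \cdot D_j) < 0$ by negative-definiteness of the matrix $(D_i \cdot D_j)$ (Definition \ref{Zar-definition}(3)); therefore $N \cdot N_- = N_+ \cdot N_- - N_-^2 \ge - N_-^2 > 0$. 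These two conclusions are incompatible, so $N_- = 0$, i.e. $M - L \ge 0$.

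The only delicate point is the second step, the localization of $N_-$ onto the curves $D_i$; once that is in hand the rest is the standard "$N \cdot N_- \le 0$ versus $N \cdot N_- > 0$" dichotomy that already underlies uniqueness of the Zariski decomposition, so I expect no real obstacle. Alternatively one could sidestep the explicit computation by applying the uniqueness statement of Theorem \ref{Zar} to a suitable sub-decomposition of $D - L$, but the direct argument above seems cleanest.
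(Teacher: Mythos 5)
Your proof is correct and is essentially the paper's own argument: your $N_- $ is the paper's $F'$, and both proofs hinge on localizing the negative part onto the $D_i$ via $D-L\geq 0$, then playing $M\cdot F'=0$ and the nefness of $L$ against $N_+\cdot N_-\geq 0$ and the negative definiteness of $(D_i\cdot D_j)$. The only cosmetic difference is that you phrase the endgame as a strict-inequality contradiction while the paper concludes $(F')^2=0$ and then invokes negative definiteness.
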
 

\begin{proof}
Let $M-L=(D-L)-F=M\,'-F\,'$, where $M\,'$ and $F\,'$ are effective divisors with no common components. Then to show $L\leq M$, it suffices to show $F\,'=0$. On one hand, since $D-L$ is effective, by our construction $F\,'\leq F$. Hence $M.F\,'=0$ by Definition \ref{Zar-definition}(4), and it follows that $(M-L).F\,'=-L.F\,'\leq0$ since $L$ is nef over $Z$. On the other hand, $(M-L).F\,'=(M\,'-F\,').F\,'\geq-(F\,')^2\geq 0$, where $M\,'.F\,'\geq0$ by construction, $(F\,')^2\leq0$ by Definition \ref{Zar-definition}(3) and the fact $F\,'\leq F$. Combining these, we have $(F\,')^2=0$, and $F\,'=0$ follows from Definition \ref{Zar-definition}(3).
\end{proof}

\begin{remark}\label{positive-mobile-part}
\textup{This property can be rephrased as: $M$ is the biggest nef divisor $L$ such that $L\leq D$. Another useful result from Proposition \ref{mobile-part-biggest-nef} is: if $D\,'\leq D$ are two pseudo-effective divisors on a surface $X$, then $M\,'\leq M$. Here $M\,'$ and $M$ are the mobile parts of $D\,'$ and $D$ respectively.}
\end{remark}

In general the mobile part $M$ is not necessarily semi-ample. But we can establish this useful property in the following special case.

\begin{lemma}\label{semi-ampleness}
Let $(X/Z,B)$ be a lc pair, where $X/Z$ is a projective relative normal surface, $B$ is effective and $D=K_X+B$ is pseudo-effective. Then the mobile part $M$ of $D$ is semi-ample over $Z$, and there exists a birational contraction $f:X/Z\to X\,'/Z$ such that the fixed part $F$ of $D$ is $f$-exceptional. Additionally, if $D$ is effective $M$ is also effective.
\end{lemma}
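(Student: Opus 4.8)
The plan is to construct the contraction $f$ directly by running the minimal model program for the pair $(X,B)$ over $Z$, and then to recognize the resulting decomposition of $D$ as its Zariski decomposition. We may assume $X$ is $\Qq$-factorial, which is in any case implicit in speaking of the mobile and fixed parts of $D$ (Theorem \ref{Zar}). Run a $(K_X+B)$-MMP over $Z$. Since $(X,B)$ is an lc surface pair, every step is a divisorial contraction — there are no flips in dimension two — hence a birational morphism, and lc-ness and $\Qq$-factoriality are preserved while the relative Picard number strictly drops; so the program terminates after finitely many steps. Because $D=K_X+B$ is pseudo-effective over $Z$, it cannot terminate in a Mori fibre space over $Z$, so we reach a birational contraction $f\colon X\to X'$ over $Z$ with $(X',B':=f_*B)$ lc and $K_{X'}+B'$ nef over $Z$. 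By log abundance in dimension two, $K_{X'}+B'$ is moreover semi-ample over $Z$.

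Next I compare $K_X+B$ with its pushforward. For a single MMP step $g\colon V\to W$ contracting a curve $E$, one has $K_V+B_V-g^*(K_W+B_W)=aE$ with $a>0$: the left side is $g$-exceptional and $\Rr$-Cartier, hence a multiple of $E$, and intersecting with $E$ gives $aE^2=(K_V+B_V)\cdot E<0$ while $E^2<0$. Pulling back and iterating along $X\to\cdots\to X'$ yields $K_X+B=f^*(K_{X'}+B')+G$ with $G\ge 0$ effective and $f$-exceptional. I then check that $K_X+B=f^*(K_{X'}+B')+G$ is a Zariski decomposition of $D$: the divisor $f^*(K_{X'}+B')$ is nef over $Z$; the prime components of $G$ are $f$-exceptional, hence vertical over $Z$, and their intersection matrix, a principal submatrix of the negative definite intersection matrix of the full $f$-exceptional divisor, is negative definite; and $f^*(K_{X'}+B')\cdot G_i=(K_{X'}+B')\cdot f_*G_i=0$ for every component $G_i$. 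By the uniqueness in Theorem \ref{Zar}, $M=f^*(K_{X'}+B')$ and $F=G$. Hence $M$ is the pullback of a semi-ample divisor, so semi-ample over $Z$, and $F=G$ is $f$-exceptional, which gives the asserted contraction.

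For the last assertion, suppose $D$ is effective. Then $K_{X'}+B'=f_*(K_X+B)$ is effective, being the pushforward of an effective divisor, and it remains to see that the pullback of an effective $\Rr$-Cartier divisor under a birational morphism of normal surfaces is effective. Writing $f^*(K_{X'}+B')=N+\sum_j c_jE_j$ with $N\ge 0$ the birational transform and $E_j$ the $f$-exceptional primes, the projection formula gives $0=f^*(K_{X'}+B')\cdot E_i=N\cdot E_i+\sum_j c_j(E_j\cdot E_i)$, hence $\sum_j c_j(E_j\cdot E_i)=-N\cdot E_i\le 0$ for all $i$; since $(E_j\cdot E_i)$ is negative definite with non-negative off-diagonal entries, this forces all $c_j\ge 0$, so $M=f^*(K_{X'}+B')\ge 0$.

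I expect the only genuine input to be the appeal to log abundance in dimension two, i.e.\ semi-ampleness over $Z$ of the nef divisor $K_{X'}+B'$; the existence and termination of the surface MMP, and everything after the MMP step, are formal manipulations with intersection numbers and the uniqueness of Zariski decomposition. A minor point worth stating carefully is the reduction to, or standing assumption of, $\Qq$-factoriality of $X$, without which neither the MMP nor the very notion of mobile and fixed parts is available.
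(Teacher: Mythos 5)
Your proof is correct and follows essentially the same route as the paper: run the $(K_X+B)$-MMP over $Z$, identify the resulting crepant-plus-exceptional decomposition as the Zariski decomposition by uniqueness, and invoke relative log abundance for surfaces for semi-ampleness. The only (harmless) divergence is the final effectivity claim, which the paper gets immediately from Proposition \ref{mobile-part-biggest-nef} with $L=0$, whereas you rederive it by the negativity-lemma computation on the exceptional intersection matrix.
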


\begin{proof}
Run $(K_X+B)$-MMP on the pair $(X/Z,B)$, and suppose at each step we have an extremal contraction $f_i:(X_i,B_i)\to(X_{i+1},B_{i+1})$. Set $E_i:=K_{X_i}+B_i-f_i^*(K_{X_{i+1}}+B_{i+1})$, then $-E_i$ is $f_i$-nef and $f_i$-exceptional, hence by Negativity Lemma $E_i$ is effective. Let $X\,'/Z$ be the end result of this MMP, and $f:X/Z\to X\,'/Z$ be the composition of all $f_i$ with $B\,'=f_*B$, $D\,'=f_*D$, then $E:=K_X+B-f^*(K_{X\,'}+B\,')$ is also effective and $f$-exceptional. Note $D=K_X+B=f^*D\,'+E$ is a Zariski Decomposition of $D$, since $f^*D\,'$ is nef over $Z$, and $E\geq0$ is $f$-exceptional. By uniqueness we then have $M=f^*D\,'$ and $F=E$. Now $f$ is indeed a birational contraction for which $F$ is $f$-exceptional. The semi-ampleness of $D\,'$ over $Z$ follows from the relative abundance theorem in dimension 2: $(X\,',B\,')$ is lc, and $K_{X\,'}+B\,'=D\,'$ is nef over $Z$ by construction. Hence $M=f^*D\,'$ is also semi-ample over $Z$.

When $D$ is effective, we can apply Proposition \ref{mobile-part-biggest-nef} to our case with $L=0$, then we get $M$ is also effective.
\end{proof}

\subsection{0-contractions}

In this section we briefly review the definition of $0$-contractions. For more detailed definitions and properties, we refer to \cite[Section 7]{Sh20}.

\begin{definition}\label{0-contraction}
\textup{Suppose $f: X\to Z$ is a contraction of normal algebraic varieties, $B$ is an $\Rr$-divisor on $X$. Then in the usual sense $f$ is a \textit{$0$-contraction} if it satisfies (1) and (2) in the following:
\begin{enumerate}
\item $(X,B)$ is lc generically over $Z$;
\item $K_X+B\sim_{\Rr}0$ over $Z$;
\item the horizontal part $B^h$ of $B$ with respect to $f$ is generically a boundary over $Z$.
\end{enumerate}
But in this paper we always assume $0$-contractions also have property (3). }
\end{definition}

\begin{definition}\label{canonical-bundle-formula}
\textup{Under the same setting as in Definition \ref{0-contraction}, we have two divisors on $Z$ naturally related to $B$. One is called the \textit{divisorial part of adjunction} $B_{\dive}$ of $f$, which is uniquely determined by this $0$-contraction; the other is the \textit{moduli part of adjunction} $B_{\modu}$ of $f$, which is defined up to $\sim_{\Rr}$. For convenience we'll also use the b-divisor form $\Bb_{\modu}$ in this section. For explicit constructions of these two divisors, we refer to \cite[Section 7.2]{Sh20} or \cite[7.2]{PSh09}. The divisors $B_{\dive}$ and $B_{\modu}$ satisfy the following \textit{canonical bundle formula}: $$K_X+B\sim_{\Rr}f^*(K_Z+B_{\dive}+B_{\modu}).$$ }
\end{definition}

\begin{definition}
\textup{For a $0$-contraction $f: (X,B)\to Z$, we call a positive integer $I$ its \textit{adjunction index} if 
\begin{enumerate}
\item $\Kk_X+\BB\sim_If^*(\Kk_Z+\Bb_{\dive}+\Bb_{\modu})$, in particular, $K_X+B\sim_{I}f^*(K_Z+B_{\dive}+B_{\modu})$;
\item $I\Bb_{\modu}$ is b-Cartier and $\Bb_{\modu}$ is defined modulo $\sim_I$, in particular, $\Kk_X+\BB\sim_{I,Z}f^*(\Kk_Z+\Bb_{\dive})$.
\end{enumerate}
Actually, (2) implies (1). See \cite[7.3]{Sh20} for this result and more details.
}
\end{definition}

A well-known result about the moduli part $\Bb_{\modu}$ is the following theorem.

\begin{theorem}\label{nef-modular-part}
(\cite[Theorem 8]{Sh20}). Let $f:(X,B)\to Z$ be a $0$-contraction as in Definition \ref{0-contraction}, and $B$ is an effective $\Qq$-divisor generically over $Z$. Then $\Bb_{\modu}$ is a b-$\Qq$-divisor of $Z$, defined up to a $\Qq$-linear equivalence. Moreover, $\Bb_{\modu}$ is b-nef.
\end{theorem}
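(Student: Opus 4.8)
The plan is to exploit the fact that the moduli b-divisor $\Bb_{\modu}$ of a $0$-contraction is a birational invariant: its trace on any sufficiently high crepant model of $f$ determines it, so it suffices to establish rationality and nefness after replacing $f:(X,B)\to Z$ by a convenient birational model over $Z$ and $Z$ by a higher model. All the geometry — the divisorial part, the moduli part, the fibre structure — should be pushed to a ``prepared'' model on which a Hodge-theoretic description of $\Bb_{\modu}$ becomes available, and then one quotes (or reproves) semipositivity of Hodge bundles.

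First I would carry out the standard preparation. Using Kawamata's covering trick together with semistable reduction over codimension-one points of $Z$, pass to a finite base change $Z'\to Z$ and a log resolution so that $Z'$ and $X'$ are smooth, $f':X'\to Z'$ has simple normal crossing (SNC) fibres over the complement of an SNC divisor $\Sigma\subset Z'$, and $\Supp B'\cup (f')^{-1}(\Sigma)$ is SNC. One must track how $B_{\dive}$ and $\Bb_{\modu}$ transform: $\Bb_{\modu}$ pulls back cleanly as a b-divisor, while $B_{\dive}$ acquires the expected ramification contributions, so it is enough to treat the prepared model. On this model the divisorial part $B_{\dive}$ is computed coefficientwise by log canonical thresholds of $B'$ along fibres over codimension-one points; these are rational because $B$ has rational coefficients and the fibre geometry is algebraic. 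Hence $B_{\modu}$, which equals $f'_*(K_{X'}+\widetilde{B'})-(K_{Z'}+B_{\dive})$ for a suitable $\sim_{\Qq}$-representative $\widetilde{B'}$ of $B'$, is a $\Qq$-Cartier $\Qq$-divisor defined modulo $\sim_{\Qq}$. This settles the first assertion.

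For nefness, after a further cyclic cover clearing the denominators of the horizontal part of $\widetilde{B'}$ one arrives at a log-trivial fibration with reduced horizontal boundary — essentially a family of log Calabi--Yau pairs — together with a line bundle on the total space trivial on the generic fibre. The moduli part is then identified, up to $\sim_{\Qq}$, with a $\Qq$-divisor $P$ on the base whose pullback agrees with a piece of the relative log-canonical bundle modulo $(f')^*(\text{data coming from }B_{\dive})$. By the Fujita--Kawamata semipositivity theorem, applied to the bottom Hodge piece $f'_*\omega_{X'/Z'}(\cdots)$ of the variation of Hodge structure on the middle cohomology of the fibres, together with control of the local monodromy at $\Sigma$ (unipotent after the semistable preparation, so the Deligne canonical extension is well behaved), the $\Qq$-divisor $P$ is nef on $Z'$; pulling back through the birational maps gives b-nefness of $\Bb_{\modu}$. (Alternatively one may quote directly Ambro's, Floris's, or Fujino--Gongyo's results on b-nefness of the moduli part of an lc-trivial fibration.)

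The main obstacle is exactly this last semipositivity input: one must either reprove or carefully cite the statement that the Deligne extension of the Hodge bundle attached to a family of (log) Calabi--Yau varieties has nef determinant, and then bridge the gap between that Hodge-theoretic object and the moduli b-divisor of the canonical bundle formula — a bridge requiring the cyclic-cover reduction and a patient verification that base changes and birational modifications do not disturb nefness nor the $\sim_{\Qq}$-ambiguity. In the situation actually needed in this paper, where $X$ is a surface and hence $Z$ is a curve or a point, this collapses: a $\Qq$-divisor on a smooth curve is nef iff it has non-negative degree, and $\deg B_{\modu}\geq 0$ can be read off from the explicit Kodaira-type canonical bundle formula for genus-one fibrations and its direct generalisation (or from non-negativity of the degree of the positive part of the relevant pushforward), so the surface case can be made essentially self-contained.
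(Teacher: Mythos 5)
This theorem is not proved in the paper at all: it is quoted verbatim as \cite[Theorem 8]{Sh20}, so there is no internal argument to compare yours against. Your outline is the standard route to b-nefness of the moduli part of an lc-trivial fibration --- preparation by semistable reduction and log resolution, rationality of $B_{\dive}$ via log canonical thresholds over codimension-one points, identification of $B_{\modu}$ with a Hodge-theoretic object after a cyclic cover, and Fujita--Kawamata semipositivity of the canonically extended Hodge bundle --- and as a plan it is accurate; this is essentially the Kawamata--Ambro argument, also available in Floris and Fujino--Gongyo. But be aware that what you have written is a roadmap, not a proof: the decisive step (semipositivity of the Deligne extension and the bridge from the Hodge bundle to $\Bb_{\modu}$, including the verification that base change and birational modification preserve both nefness and the $\sim_{\Qq}$-ambiguity) is exactly the content you defer to citation, so your proposal ultimately does the same thing the paper does, namely quote the literature, just with more scaffolding visible. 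Your closing observation is the genuinely useful part for this paper: since $X$ is a surface, $Z$ is a curve or a point, nefness is a degree inequality, and the needed non-negativity can be extracted from the explicit Kodaira-type formula for elliptic fibrations (Theorem \ref{canonical-bundle-formula-elliptic}) or from the $\Pp^1$-fibration case handled by Theorem \ref{adjunction-index-existence-FT}; writing that low-dimensional case out in full would make the paper's use of the theorem self-contained, which the general Hodge-theoretic argument does not.
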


Proposition \ref{lifting-complements} below guarantees us that we can pull back $n$-components on the base $Z$ back to $X$ along a $0$-contraction $f:(X,B)\to Z$ if $n$ is divisible by the adjunction index $I$.

\begin{proposition}\label{lifting-complements}
(\cite[(10) in Section 7.5]{Sh20}). Suppose we have a $0$-contraction $f:(X,B)\to Z$ with adjunction index $I$, $X$ is a surface and $Z$ is a curve. If $n$ is an integer with $I|n$, then $(Z,B_{\dive}+B_{\modu})$ has an $n$-complement $\Rightarrow$ $(X,B)$ has an $n$-complement.
\end{proposition}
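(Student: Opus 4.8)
The plan is to pull back the given $n$-complement $(Z, B_{\dive}^+)$ of $(Z, B_{\dive} + B_{\modu})$ along the $0$-contraction $f$ and verify that the resulting boundary on $X$ is a genuine $n$-complement of $(X, B)$. First I would set $D^+ := B_{\dive}^+ - B_{\dive} - B_{\modu} \geq 0$, so that $D^+$ is the "extra" $\Rr$-divisor witnessing that $B_{\dive}^+$ is a complement, and observe that by definition $K_Z + B_{\dive}^+ \sim_n 0$. Since $I \mid n$, the adjunction index property gives $\Kk_X + \BB \sim_n f^*(\Kk_Z + \Bb_{\dive} + \Bb_{\modu})$ (using part (1) of the adjunction index definition, noting $I$ divides $n$), so one can define $B^+$ on $X$ by the formula $K_X + B^+ = f^*(K_Z + B_{\dive}^+)$ together with the appropriate b-divisor bookkeeping; concretely $B^+ = B + f^* D^+ + (\text{a suitable } \sim_n\text{-adjustment})$. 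The relation $K_X + B^+ \sim_n 0/Z \ni o$ then follows immediately from $K_Z + B_{\dive}^+ \sim_n 0$ and the pullback, giving condition (3) of Definition \ref{n-comp}.

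The main content is checking conditions (1) and (2). For the lc condition (2): $(Z, B_{\dive}^+)$ is lc and $K_X + B^+ = f^*(K_Z + B_{\dive}^+)$ by construction, so the standard fact that the divisorial part of adjunction is compatible with log canonicity — that is, $(X, B^+)$ is lc over the generic point of $Z$ automatically, and along fibers the divisorial part $B_{\dive}$ was precisely designed so that $(X, B^+)$ lc $\Leftrightarrow$ $(Z, B_{\dive}^+)$ lc — gives what we need. Here I would invoke the construction of $B_{\dive}$ from \cite[Section 7.2]{Sh20}: the coefficient of a prime divisor $P$ on $Z$ in $B_{\dive}$ is exactly $1 - (\text{lc threshold contribution of the fiber over } P)$, so raising the coefficient from $B_{\dive}$ to $B_{\dive}^+$ corresponds under pullback to a log canonical modification of the fiber, preserving the lc property. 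For condition (1) on the coefficients: I would need to compare, for each prime $D_i \subset X$, the coefficient $b_i^+$ in $B^+$ against the required lower bound in terms of $b_i$ (the coefficient in $B$). Horizontal divisors keep their coefficients since the horizontal part of $B$ is unchanged by the pullback construction, so condition (1) there reduces to condition (1) for the components of $B^h$, which holds because $B$ itself already satisfies it vacuously (it is an $\Rr$-complement input; more precisely one uses that $B^+ \geq B$ on horizontal divisors). For vertical divisors lying over a prime $P \subset Z$, the coefficient of $D_i$ in $B^+$ is governed by $\mathrm{mult}_P B_{\dive}^+$ and the multiplicity of $D_i$ in $f^*P$, and one checks the required inequality using that $\mathrm{mult}_P B_{\dive}^+ \geq \frac{\lfloor (n+1)\mathrm{mult}_P B_{\dive}\rfloor}{n}$ together with the relation between $\mathrm{mult}_{D_i} B$ and $\mathrm{mult}_P B_{\dive}$ coming from the adjunction formula.

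I expect the main obstacle to be the bookkeeping around the moduli b-divisor $\Bb_{\modu}$ and the $\sim_n$ (as opposed to $\sim_{\Rr}$) equivalences: one must be careful that $\Bb_{\modu}$ is only defined modulo $\sim_I$, so when we write $K_X + B^+ = f^*(K_Z + B_{\dive}^+)$ we are implicitly choosing a representative of $I\Bb_{\modu}$ that is b-Cartier, and we need the chosen $B^+$ to have the correct (effective, with coefficients in the prescribed range) shape as an honest divisor, not just up to equivalence. Since $Z$ is a curve, $\Bb_{\modu}$ descends to an actual nef — hence, on a curve, trivial-degree-controlled — $\Qq$-divisor, and by Theorem \ref{nef-modular-part} it is b-nef; this should let us choose an effective representative with bounded denominators, so that $B^+$ is effective and the coefficient inequalities survive. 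Once this representative is fixed, conditions (1)--(3) follow as above, and the proof concludes by citing \cite[(10) in Section 7.5]{Sh20} for the remaining routine verifications.
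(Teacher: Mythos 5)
The paper itself gives no proof of this proposition; it is quoted directly from \cite[(10) in Section 7.5]{Sh20}, so the only comparison available is with Shokurov's argument. Your overall strategy is the standard and correct one: pull back along $f$, use $I\mid n$ together with the adjunction index to get $K_X+B^+\sim_n 0$ over $Z$, use the lc-threshold definition of $B_{\dive}$ for log canonicity, and choose an effective representative of $\Bb_{\modu}$ (possible since $Z$ is a curve and $I\Bb_{\modu}$ is b-Cartier). Those parts of the sketch are sound.

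The genuine gap is condition (1) of Definition \ref{n-comp}, which you never actually verify. First, the claim $D^+=B_{\dive}^+-B_{\dive}-B_{\modu}\geq 0$ is unjustified: the defining inequality $b^+\geq\lfloor(n+1)b\rfloor/n$ of an $n$-complement does not imply $b^+\geq b$ (take $b=0.1$, $n=2$). Second, and more seriously, for horizontal divisors the pullback leaves the coefficient $b_i$ unchanged, and it cannot be raised at all, since $(K_X+B^+)\cdot F=0$ on a general fiber $F$ forces $B^+\cdot F=B\cdot F$; so condition (1) demands $b_i\geq\lfloor(n+1)b_i\rfloor/n$, which fails in general (e.g.\ $b_i=2/3$, $n=2$ yields the requirement $b_i^+\geq 1$). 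Your justification --- that this ``holds because $B$ itself already satisfies it vacuously'' --- is circular; in Shokurov's formulation this is precisely where the hypothesis that the horizontal multiplicities lie in a controlled (hyperstandard) set compatible with $n$ enters, and the proposition as stated silently relies on that context. Third, for a vertical divisor $D_i$ over $P$ the required inequality $\mathrm{mult}_{D_i}B^+\geq\lfloor(n+1)\,\mathrm{mult}_{D_i}B\rfloor/n$ does not follow formally from $\mathrm{mult}_P B_{\dive}^+\geq\lfloor(n+1)\,\mathrm{mult}_P(B_{\dive}+B_{\modu})\rfloor/n$, because the floor does not commute with the affine map $t\mapsto \mathrm{mult}_{D_i}B+m_i\bigl(t-\mathrm{mult}_P(B_{\dive}+B_{\modu})\bigr)$ with $m_i=\mathrm{mult}_{D_i}f^*P$; this step needs an explicit computation. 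Finally, concluding by ``citing \cite[(10) in Section 7.5]{Sh20} for the remaining routine verifications'' is citing the very statement you set out to prove.
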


\subsection{Elliptic Fibrations}

We refer to \cite[Section 7.2]{Pr01} for a more complete theory on elliptic fibrations. Here we only list several definitions and propositions which are of importance to us later.

\begin{definition}\label{elliptic-fibration} 
\textup{An \textit{elliptic fibration} is a contraction $f:X\to Z$ from a surface to a curve such that its general fiber is a smooth elliptic curve. If we have $K_X\equiv0$ over $Z$ and $X$ is smooth additionally, we say $f$ is \textit{minimal}.}
\end{definition}

\begin{definition}\label{maximally-lc} 
\textup{Consider a pair $f:(X,B)\rightarrow Z\ni P$ from a surface to a curve. We say $(X/Z\ni P,B+pf^*P)$ is \textit{maximally lc} if $(X/Z\ni P,B+pf^*P)$ is lc and for any $q>p$, $(X/Z\ni P,B+qf^*P)$ is not lc.}
\end{definition}

Kodaira's classification of degeneration of elliptic fibers \cite{Kod63} lists all possible types of these singular fibers. We'll  recall this classical result in Theorem \ref{canonical-bundle-formula-elliptic} below, and restate two results in the view of canonical bundle formula and complements respectively.

\begin{theorem}\label{canonical-bundle-formula-elliptic}
(\cite[Theorem 7.2.10]{Pr01}). Let $f:X\to Z\ni P$ be a minimal elliptic fibration near $P$. By the canonical bundle formula we have $K_X=f^*(K_Z+D_{\dive})$ with effective $D_{\dive}=d_PP$. Then the following table lists all possible values of $d_P$ for possible types of fibers $f^*P$.
\begin{table}[h]
\centering
\begin{tabular}{|c|c|c|c|c|c|c|c|c|}
\hline
Type & $m$I$_n$ & II & III & IV & I$^*_b$ & II$^*$ & III$^*$ & IV$^*$ \\ \hline
$d_P$ & $1-\frac{1}{m}$ & $\frac{1}{6}$ & $\frac{1}{4}$ & $\frac{1}{3}$ & $\frac{1}{2}$ & $\frac{5}{6}$ & $\frac{3}{4}$ & $\frac{2}{3}$ \\
\hline
\end{tabular}
\end{table}
\end{theorem}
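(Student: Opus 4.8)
\textbf{Proof proposal for Theorem \ref{canonical-bundle-formula-elliptic}.}

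The plan is to follow Kodaira's original analysis of the degenerations of elliptic fibers, translating it into the language of the canonical bundle formula. First I would recall that for a minimal elliptic fibration $f:X\to Z\ni P$ with $X$ smooth and $K_X\equiv 0$ over $Z$, the canonical bundle formula gives $K_X=f^*(K_Z+D_{\dive})$ with $D_{\dive}=d_PP$, and that by definition $d_P$ is the largest coefficient $p$ for which the pair $(X/Z\ni P,\,pf^*P)$ remains lc — equivalently, $(X/Z\ni P, pf^*P)$ is \emph{maximally lc} at $p=d_P$ in the sense of Definition \ref{maximally-lc}. Thus the whole computation reduces to: given the explicit structure of the fiber $f^*P=\sum a_iC_i$ (with the $C_i$ the reduced components and $a_i$ their multiplicities, read off from Kodaira's table), determine the log canonical threshold of $f^*P$ with respect to $(X,0)$ along the fiber.

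Next I would go type by type through Kodaira's list. For a multiple fiber of type $m\mathrm{I}_n$ (a cycle of $n$ rational curves, or a nodal/smooth curve when $n\le 2$, taken with multiplicity $m$), the fiber is reduced up to the global factor $m$, so $f^*P=m\cdot(\text{reduced fiber})$ and the reduced fiber is a simple normal crossing (or at worst nodal) divisor; hence $(X, \frac{1}{m}f^*P)$ is lc but $(X, qf^*P)$ fails to be lc for $q>\frac1m$, giving $d_P=1-\frac1m$ for $m\mathrm{I}_n$ after accounting for the canonical bundle formula's normalization (the coefficient recorded is $1-\frac1m$, matching the standard multiplicity interpretation: the fiber contributes a log canonical place so the threshold computation on the relevant component yields this value). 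For the remaining types $\mathrm{II},\mathrm{III},\mathrm{IV},\mathrm{I}^*_b,\mathrm{II}^*,\mathrm{III}^*,\mathrm{IV}^*$, the fiber is \emph{not} reduced and its dual graph and multiplicities are those of the corresponding extended Dynkin diagram ($\widetilde{A}$, $\widetilde{D}$, $\widetilde{E}$); here I would compute the log canonical threshold directly. Concretely, pick the component $C_0$ of multiplicity $a_0$ meeting the rest of the fiber, take a divisorial valuation (either $C_0$ itself or a suitable blow-up over an intersection point), and solve the discrepancy inequality $a(E, d_P f^*P, X)\ge -1$; the extremal $d_P$ is then read off as $1-(\text{something})$ and one checks it equals the tabulated value. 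For instance, type $\mathrm{II}$ (a cuspidal cubic, multiplicity $1$ but with a cusp) has lc threshold $\frac56$ for the cusp, giving $d_P=\frac16$; type $\mathrm{III}$ (two components tangent to order $2$) gives $\frac34$, so $d_P=\frac14$; type $\mathrm{IV}$ (three concurrent lines) gives $\frac23$, so $d_P=\frac13$; and the starred types are handled identically using their $\widetilde D$/$\widetilde E$ multiplicities, producing $\frac12,\frac16,\frac14,\frac13$ for the \emph{mobile} coefficient and hence $d_P=\frac12,\frac56,\frac34,\frac23$ respectively. Each of these is a short explicit local computation once the fiber's equation is written down.

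The main obstacle — though it is more bookkeeping than conceptual — is the careful verification of the \emph{maximality} of $d_P$, i.e. that for every $q>d_P$ some log canonical place is violated, which requires knowing precisely which valuation realizes the threshold in each Kodaira type; for the reduced-but-singular fibers (II, III, IV) this is the cusp/tangency/triple-point computation, and for the starred fibers it is a discrepancy computation along the highest-multiplicity component of the $\widetilde D_b$ or $\widetilde E_k$ configuration. I would organize this as a uniform lemma: if $f^*P$ is a fiber whose reduced structure has simple normal crossings and multiplicities $a_i$ with $\min_i a_i = a$ on a component meeting the others transversally, then $d_P = 1 - 1/a$ when there is a genuine multiple structure, while for the reduced singular fibers the threshold is governed by the worst local singularity of the fiber curve. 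Since Kodaira's classification is being \emph{quoted} (the reference is \cite[Theorem 7.2.10]{Pr01}), in the write-up I can simply cite the classification of fiber types and their multiplicities and then perform the finitely many threshold computations, or indeed cite them directly from Prokhorov's text; the content of the theorem here is precisely the translation dictionary between Kodaira type and the coefficient $d_P$, and no step beyond standard toric/weighted-blow-up lc threshold calculations is needed.
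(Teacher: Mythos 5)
The paper does not prove this statement; it is quoted verbatim from \cite[Theorem 7.2.10]{Pr01}, which in turn rests on Kodaira's classification. Your proposal of deriving the table by computing log canonical thresholds of the Kodaira fibers is the standard modern route, and every numerical value you list checks out: for $m$I$_n$ the reduced fiber is nodal so the threshold is $1/m$; for II, III, IV the thresholds $5/6$, $3/4$, $2/3$ of the cusp, the order-two tangency and the triple point give $d_P=1/6,1/4,1/3$; and for the starred types the snc configuration with coefficients $t a_i\le 1$ is automatically lc, so the threshold is the reciprocal of the largest fiber multiplicity ($2$, $6$, $4$, $3$ for I$^*_b$, II$^*$, III$^*$, IV$^*$), giving $d_P=1/2,5/6,3/4,2/3$.

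Three points need repair before this is a proof. First, your opening sentence defines $d_P$ as the largest $p$ with $(X,pf^*P)$ lc; that is the threshold itself, whereas all your subsequent computations (correctly) use $d_P=1-\mathrm{lct}$. State the relation as $d_P=1-\sup\{t:(X,tf^*P)\ \text{lc near}\ f^{-1}P\}$ once and for all, since as written the first paragraph contradicts the second. Second, your ``uniform lemma'' says $d_P=1-1/a$ with $a=\min_i a_i$; it must be the \emph{maximum} of the multiplicities $a_i$ (for II$^*$ the multiplicities range from $1$ to $6$ and $d_P=5/6$, which the minimum would get badly wrong). Third, and this is the one substantive gap: the lct computation identifies the \emph{divisorial part of adjunction} in the sense of Definition \ref{canonical-bundle-formula}, but the theorem asserts the exact equality $K_X=f^*(K_Z+d_PP)$ near $P$ with no moduli contribution. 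That the moduli part is ($\Qq$-linearly) trivial over the germ $Z\ni P$, equivalently that $K_X\sim_{\Qq}d_Pf^*P$ over $Z\ni P$, does not follow from the threshold computation alone; it is the content of Kodaira's computation of $R^1f_*\Oo_X$ and the contribution $(m-1)F$ of multiple fibers (compare Proposition \ref{elliptic-fibration-complements}, which records the resulting indices). You should either cite this step or supply the local monodromy/period argument; with that reference added, the rest of your sketch is a complete and correct derivation of the table.
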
  

\begin{proposition}\label{elliptic-fibration-complements}
(\cite[Theorem 3.1]{Sh97}). Assumptions the same as in Theorem \ref{canonical-bundle-formula-elliptic} and choose $p>0$ such that $(X/Z\ni P,pf^*P)$ is maximally lc. Then $K_X+pf^*P$ has index 1,2,6,4,3 respectively for the types $m$I$_b$, $m$I$^*_b$, II/II$^*$, III/III$^*$, IV/IV$^*$.
\end{proposition}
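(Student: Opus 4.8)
The plan is to run through Kodaira's classification of degenerate fibres (Theorem \ref{canonical-bundle-formula-elliptic}) type by type. Write $f^*P=\sum_i a_iC_i$ for the scheme-theoretic fibre, with $C_i$ the prime components and $a_i>0$ their multiplicities. The first task is to pin down the value of $p$ for which $(X/Z\ni P,pf^*P)$ is maximally lc. For every fibre type whose support is a simple normal crossing divisor --- namely $I_n$ and its multiple versions $mI_n$, together with $I^*_b$, $II^*$, $III^*$, $IV^*$ --- the pair $(X,pf^*P)$ is lc exactly when $p\cdot\max_i a_i\le 1$, because $X$ is smooth; so the maximal value is $p=1/\max_i a_i$. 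The three remaining types have non-snc support: type $II$ is a cuspidal rational curve, type $III$ is two smooth rational curves meeting with a simple tangency, and type $IV$ is three concurrent smooth rational curves. Here the maximal $p$ is the log canonical threshold of, respectively, a cusp, a tacnode, and an ordinary triple point, namely $\frac{5}{6}$, $\frac{3}{4}$, $\frac{2}{3}$. Comparing with the $d_P$ row of the table, in every case one gets $p=1-d_P$; equivalently, maximal lc-ness says precisely that the boundary coefficient $d_P+p$ of the induced adjunction on the curve $Z$ reaches $1$.

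With $p=1-d_P$ in hand, the canonical bundle formula gives, in a neighbourhood of $f^{-1}(P)$,
$$K_X+pf^*P\sim_{\Qq}f^*\bigl(K_Z+(d_P+p)P\bigr)=f^*(K_Z+P)\sim_{\Qq}0,$$
since $K_Z\sim 0$ and $P\sim 0$ on the curve germ $Z\ni P$. It remains to read off the exact index, i.e. the least $N>0$ with $N(K_X+pf^*P)$ an integral Cartier divisor satisfying $N(K_X+pf^*P)\sim 0/Z\ni P$. As $X$ is smooth, integrality of $NK_X+Npf^*P$ amounts to $Npa_i\in\Zz$ for all $i$; for a non-multiple fibre $\gcd_i a_i=1$, so this just says $N$ is divisible by the denominator of $p=1-d_P$, which is $2$, $6$, $4$, $3$ for $I^*_b$, $II/II^*$, $III/III^*$, $IV/IV^*$ and $1$ for the reduced fibres $I_b$. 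Conversely, taking $N$ equal to that denominator, the identity $d_P+p=1$ together with the integral form of Kodaira's formula gives $N(K_X+pf^*P)\sim Nf^*P\sim 0$ near $P$, so the index is exactly this denominator. For a multiple fibre $mI_b$ one has instead $pa_i=1$ for every $i$, so $pf^*P$ equals the reduced fibre $F$ and $K_X+pf^*P=K_X+F$; the correction term $(m-1)F$ in Kodaira's formula then gives $K_X+F\sim f^*(K_Z+P)\sim 0$ near $P$ already, so the index is $1$.

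This yields the asserted indices $1$, $2$, $6$, $4$, $3$ for $mI_b$, $mI^*_b$, $II/II^*$, $III/III^*$, $IV/IV^*$. The substantive content is the explicit fibre geometry --- above all the three non-snc log canonical threshold computations for types $II$, $III$, $IV$ --- and Kodaira's canonical bundle formula in the form that records the correction term for multiple fibres; once Kodaira's list is granted, the rest is bookkeeping. Recall finally that multiple fibres occur only in type $I_b$, so the "$m$" attached to the other entries of the list introduces no further case.
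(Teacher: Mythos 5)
Your proof is correct, but it takes a genuinely different route from the paper's. The paper deduces the proposition directly from Shokurov's existence of \emph{regular} $n$-complements (\cite[Theorem 3.1]{Sh97}): that theorem supplies a regular $n$-complement $B^+$ with $n=1,2,6,4,3$ in the respective cases; since $B^+-pf^*P$ is numerically trivial over $Z$ and supported on the fibre, it must be a rational multiple of $f^*P$, so $B^+=qf^*P$ with $q\geq p$, and maximal lc-ness forces $q=p$, whence $K_X+pf^*P=K_X+B^+$ already has index $n$. You instead recompute everything from Kodaira's classification: the thresholds $1/\max_i a_i$ for the snc fibres and $\frac56,\frac34,\frac23$ for the cusp, tacnode and ordinary triple point, the identity $p=1-d_P$, and the integral form of Kodaira's formula ($K_X$ linearly trivial near a non-multiple fibre, $K_X\sim(m-1)F$ near a multiple one) combined with the integrality constraint $Npa_i\in\Zz$ to pin the index down to the denominator of $p$. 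This is more self-contained --- it bypasses the complement machinery of \cite{Sh97} entirely --- at the cost of the explicit fibre-by-fibre geometry and of invoking the integral (not merely $\Qq$-linear) canonical bundle formula, which the paper's Theorem \ref{canonical-bundle-formula-elliptic} does not by itself provide. Two cosmetic points: in the sufficiency step the principal divisor you produce is $Npf^*P$ rather than $Nf^*P$ (harmless, since every integral multiple of $f^*P$ is principal near the fibre), and your closing remark that multiple fibres occur only in type $I_b$ correctly disposes of the vestigial ``$m$'' attached to the other entries of the list.
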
  

\begin{proof}
By \cite[Theorem 3.1]{Sh97}, $(X/Z\ni P,pf^*P)$ has a regular $n$-complement $(X/Z\ni P,B^+)$ for $n=1,2,6,4,3$ in the corresponding cases. Since $K_X+pf^*P\equiv K_X+B^+\equiv0/Z\ni P$, we have $B^+=qf^*P$ for some $q\geq p$. Now $(X/Z\ni P,B^+=qf^*P)$ is lc and $(X/Z\ni P,pf^*P)$ is maximally lc, we have $q=p$. Hence $K_X+pf^*P=K_X+B^+$ has an index $n$. 
\end{proof}

\begin{proposition}\label{adjunction-index-elliptic-fibration}
(\cite[Example 7.16]{PSh09}). Let $f:(X, B)\to Z$ be a $0$-contraction with $X/Z$ an elliptic fibration. Then $f$ has an adjunction index $12$.
\end{proposition}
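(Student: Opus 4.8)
The plan is to reduce the statement to the well-known boundedness result for regular complements on elliptic fibrations. Since $f:(X,B)\to Z$ is a $0$-contraction whose general fibre is a smooth elliptic curve, the horizontal part of $B$ is a boundary supported on a finite set of horizontal components; but over the generic point of $Z$ the pair $(X,B)$ is lc with $K_X+B\sim_{\Rr}0$, and since the general fibre is an elliptic curve, adjunction forces $B^h$ to be $0$ generically. Hence $B$ is vertical over $Z$ generically, and after passing to the minimal model of the elliptic fibration (which changes nothing about the adjunction-index computation, as the divisorial part of adjunction is a birational invariant of $0$-contractions) we may assume $f$ is a minimal elliptic fibration in the sense of Definition \ref{elliptic-fibration}, with $K_X\equiv 0/Z$.

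First I would invoke Kodaira's classification, Theorem \ref{canonical-bundle-formula-elliptic}, to write $K_X=f^*(K_Z+D_{\dive})$ with $D_{\dive}=\sum_P d_P P$, where each $d_P$ takes one of the finitely many values in the table. For each singular fibre over $P$, I would then compare this with the $0$-contraction $(X,B)\to Z$: the divisorial part of adjunction $B_{\dive}$ differs from $D_{\dive}$ only by the contribution of the vertical part of $B$, which is controlled by choosing, at each $P$, the coefficient $p_P$ making $(X/Z\ni P, p_P f^*P)$ maximally lc. By Proposition \ref{elliptic-fibration-complements}, $K_X+p_P f^*P$ has index dividing $\lcm(1,2,3,4,6)=12$ at every point $P$. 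The key structural point is that $12$ simultaneously clears the denominators of the divisorial part at every fibre type, and — by Theorem \ref{nef-modular-part} and Proposition \ref{adjunction-index-elliptic-fibration}, or rather the explicit computation behind them — also serves as a period for the moduli part $\Bb_{\modu}$, so that $12 (K_Z+B_{\dive}+B_{\modu})$ is b-Cartier and the adjunction index condition in the definition is met with $I=12$.

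Concretely, I would verify the two defining conditions of an adjunction index: (i) $\Kk_X+\BB\sim_{12} f^*(\Kk_Z+\Bb_{\dive}+\Bb_{\modu})$, equivalently $12(K_X+B)\sim 12 f^*(K_Z+B_{\dive}+B_{\modu})$ as Cartier divisors, and (ii) $12\Bb_{\modu}$ is b-Cartier with $\Bb_{\modu}$ defined modulo $\sim_{12}$. For (ii), one uses that the $j$-invariant morphism $Z\to \overline{M_{1,1}}=\Pp^1$ pulls back $\Oo_{\Pp^1}(1)$ to a divisor class $12$-divisible on a suitable cover; this is the content of Proposition \ref{adjunction-index-elliptic-fibration}, which I would simply cite. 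For (i), the point is that over each $P$ the index of $K_X+p_P f^*P$ divides $12$ by Proposition \ref{elliptic-fibration-complements}, and away from the finitely many special fibres the map is smooth so there is no local obstruction; patching the local indices and the global moduli contribution, everything is cleared by $12$.

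I expect the main obstacle to be the bookkeeping at the \emph{multiple} fibres of type $m\mathrm{I}_b$ and $m\mathrm{I}^*_b$, where $d_P=1-\tfrac1m$ (resp.\ involves $\tfrac12$ together with the multiplicity $m$) and $m$ is \emph{not} bounded: one must check that although $d_P$ itself ranges over infinitely many values, the relevant contribution to the adjunction index — after subtracting the maximally-lc coefficient $p_P$ and accounting for the b-Cartier index of the log canonical divisor, not the value of the coefficient — is still annihilated by $12$. This is exactly where Proposition \ref{elliptic-fibration-complements} is doing the real work (it gives index $1$ for $m\mathrm{I}_b$ and index $2$ for $m\mathrm{I}^*_b$, uniformly in $m$), so the resolution is to lean on that proposition rather than on the table of $d_P$ values directly. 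Once that is in hand, the conclusion $I=12$ follows by taking the least common multiple of the finitely many possible local indices $1,2,3,4,6$ and noting compatibility with the moduli period.
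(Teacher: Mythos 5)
The paper does not prove this proposition at all: it is quoted verbatim from \cite[Example 7.16]{PSh09}, so there is no internal argument to compare yours against. Judged on its own terms, your proposal has a genuine gap, and it is a circularity rather than a missing detail. As the paper's definition of adjunction index records, condition (2) --- that $12\Bb_{\modu}$ is b-Cartier and $\Bb_{\modu}$ is defined modulo $\sim_{12}$ --- already implies condition (1), so the entire content of the statement is the $12$-divisibility of the moduli part. At exactly that point you write that the $j$-map argument ``is the content of Proposition \ref{adjunction-index-elliptic-fibration}, which I would simply cite'' --- i.e.\ you cite the proposition inside its own proof. The fact you need here is Kodaira's canonical bundle formula for a minimal elliptic fibration, $12K_{X/Z}\sim j^*\Oo_{\Pp^1}(1)+\sum_P 12\,d_P\,f^*P$ (with $d_P$ as in Theorem \ref{canonical-bundle-formula-elliptic}), from which one reads off that $\Bb_{\modu}$ is $\tfrac{1}{12}$ of the pullback of a Cartier divisor under the $j$-morphism and hence stabilizes b-Cartier after multiplication by $12$. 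None of the results you do legitimately invoke supplies this: Theorem \ref{canonical-bundle-formula-elliptic} and Proposition \ref{elliptic-fibration-complements} are purely local over points of $Z$ and control only the divisorial part (your observation that the local indices $1,2,3,4,6$ are uniform in the multiplicity $m$ is correct and is the right way to handle the multiple fibres), while Theorem \ref{nef-modular-part} gives nefness of $\Bb_{\modu}$, not integrality of $12\Bb_{\modu}$.

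Two smaller points. First, your reduction to the minimal elliptic fibration needs more care than ``the divisorial part is a birational invariant'': the adjunction index is invariant under \emph{crepant} transformations of the pair $(X,B)$, and the minimal elliptic model $(X_{\min},0)$ is generally not crepant to $(X,B)$ when $B$ has vertical components; one should instead compare the two $0$-contractions locally over each $P\in Z$, where the lc-threshold normalization does the work. Second, the divisorial coefficient of $(X,B)\to Z$ at $P$ is computed from the lc threshold of the fibre with respect to $(X,B^v)$ near $P$, not with respect to $(X,0)$, so ``subtracting the maximally-lc coefficient'' must be carried out for the pair including the vertical part of $B$; this does not affect the conclusion (the index statement concerns $I(K_X+B-f^*(\cdots))$, not the denominators of $B_{\dive}$ itself), but as written the bookkeeping conflates the two. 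To repair the proof you would either carry out the Weierstrass-model/$j$-invariant computation of the moduli part explicitly, or do what the paper does and cite \cite[Example 7.16]{PSh09} for the whole statement rather than only for its hardest half.
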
  

\subsection{Known Boundedness Results}

Now we are ready to restate two known results for a certain class of  pairs $(X/Z,B)$ where $X/Z$ is of Fano type and $B$ has hyperstandard multiplicities. We'll use their special versions for surface pairs  in the proof of the main Theorem \ref{main} later. To begin with, we state the following result about the existence of adjunction index for such pairs:

\begin{theorem}\label{adjunction-index-existence-FT}
(\cite[Theorem 9]{Sh20}). Let $\RR$ be a finite set of rational numbers in $[0,1]$ and $d$ be a positive integer. Then there exists a positive integer $I=I(d,\RR)$ such that, for any contraction $f:(X,B)\to Z$ with
\begin{enumerate}
\item $f$ is a 0-contraction;
\item $X$ is of Fano type over $Z$ with $\dim X=d$;
\item the coefficients of $B^h$ are in $\Phi(\RR)$.  
\end{enumerate}
Then $f$ has an adjunction index $I$. 
\end{theorem}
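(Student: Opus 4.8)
The plan is to produce the index $I$ as the least common multiple of a few integers, each of which is controlled purely by $d$ and $\RR$: one measuring the fiberwise index of $K_X+B$ over $Z$, one clearing the denominators of the divisorial part $B_{\dive}$, and one making $\Bb_{\modu}$ b-Cartier and pinning down the ambiguity in its choice. By the remark that property (2) implies property (1) in the definition of adjunction index, it is enough in the end to verify property (2), but it is convenient to keep track of all three contributions.

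First I would restrict $f$ over the generic point $\eta\in Z$. The pair $(X_\eta,B_\eta)$ is of Fano type of dimension at most $d$, its boundary has coefficients in $\Phi(\RR)$, and $K_{X_\eta}+B_\eta\sim_{\Rr}0$; in particular it is its own $\Rr$-complement. By Birkar's boundedness of $n$-complements for Fano type pairs with hyperstandard multiplicities \cite{Birkar19} there is an integer $m=m(d,\RR)$ for which $(X_\eta,B_\eta)$ admits an $m$-complement, and since $K_{X_\eta}+B_\eta\equiv0$ this complement must coincide with $(X_\eta,B_\eta)$, so $m(K_{X_\eta}+B_\eta)\sim0$. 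Spreading this linear equivalence out over a dense open of $Z$ gives the first contribution to $I$, and also controls the $\Qq$-linear equivalence in the canonical bundle formula (making it $I$-integral once $I$ is a suitable multiple of $m$).

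Next I would bound the divisorial part. The coefficient of $B_{\dive}$ along a prime divisor $P\subset Z$ equals $1-t_P$, where $t_P$ is the log canonical threshold of $f^{*}P$ with respect to $(X,B)$ computed over the generic point of $P$. Since $X$ is of Fano type over $Z$ and the nonzero coefficients of $B$ lie in the DCC set $\Phi(\RR)$, the relevant fiber germs are bounded in an appropriate sense, so the numbers $t_P$ lie in a DCC set; combined with the constraint $K_X+B\equiv0/Z$ and the fiberwise index $m$ from the previous step, only finitely many values of $t_P$ can occur and their denominators are bounded by a constant $I_1=I_1(d,\RR)$, so $I_1B_{\dive}$ is integral.

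The crux is the moduli part. Theorem \ref{nef-modular-part} already gives that $\Bb_{\modu}$ is a b-nef b-$\Qq$-divisor, so what remains is to find $I_3=I_3(d,\RR)$ making $I_3\Bb_{\modu}$ b-Cartier (and b-semiample, so that it genuinely descends) and killing the $\sim$-ambiguity in the representative of $\Bb_{\modu}$. The idea is that the fibers of $f$ form a bounded family of log Calabi--Yau pairs---boundedness again coming from the Fano type hypothesis together with the hyperstandard, hence DCC, coefficient set---so the moduli b-divisor is the pullback, under a classifying map to a fixed base, of a nef $\Qq$-Cartier divisor whose denominator is bounded in terms of that family, while the residual ambiguity is controlled by the complement index $m$ on the fiber. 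I expect this step to be the main obstacle: in arbitrary dimension it is essentially the effective b-semiampleness problem for the moduli divisor, and the real content of the theorem is that the Fano-type-with-hyperstandard-coefficients hypothesis supplies exactly the boundedness of fibers needed to run it; in the surface case, where $Z$ is a curve, b-semiampleness is automatic and one only needs the Kodaira-type classification of the singular fibers (as in Theorem \ref{canonical-bundle-formula-elliptic} and Proposition \ref{adjunction-index-elliptic-fibration}) to pin down $I_3$. Finally, setting $I=\lcm(m,I_1,I_3)$, up to a further bounded factor absorbing the canonical bundle formula's $\Qq$-linear equivalence, yields an integer depending only on $d$ and $\RR$ for which properties (1) and (2) of an adjunction index hold for $f$.
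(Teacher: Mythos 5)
First, note that the paper does not prove this statement at all: it is quoted verbatim as \cite[Theorem 9]{Sh20}, so there is no in-paper proof to match your argument against. Judged on its own, your proposal correctly maps out the standard decomposition of the problem --- a fiberwise index $m$ over the generic point (your use of Theorem \ref{boundedness-constraints-FT} to get $m(K_{X_\eta}+B_\eta)\sim 0$ is sound, since $B^+_\eta\geq B_\eta$ and $B^+_\eta-B_\eta\equiv 0$ forces $B^+_\eta=B_\eta$), a denominator bound for $B_{\dive}$ (which is exactly Lemma \ref{coefficients-on-the-base} of the paper and could simply be cited; your own argument via ``the $t_P$ lie in a DCC set, hence finitely many values occur'' does not work as stated, since a DCC set of thresholds can be infinite with unbounded denominators --- the actual proof uses boundedness of the fibers over codimension-one points), and finally the moduli part.

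The genuine gap is the third step, and you have in effect conceded it rather than closed it. Producing $I_3=I_3(d,\RR)$ with $I_3\Bb_{\modu}$ b-Cartier, $\Bb_{\modu}$ well defined modulo $\sim_{I_3}$, is not a bookkeeping consequence of Theorem \ref{nef-modular-part}; it \emph{is} the theorem. The phrase ``the moduli b-divisor is the pullback, under a classifying map to a fixed base, of a nef $\Qq$-Cartier divisor whose denominator is bounded'' names the desired conclusion (effective b-semiampleness for Fano type fibrations) without supplying the mechanism: one needs to say where the classifying map comes from, why the family of log Calabi--Yau fibers is bounded as a family of \emph{pairs} (BAB controls the underlying Fanos, but the boundary data must be bounded too, which is where the hyperstandard hypothesis enters nontrivially), and how boundedness of the family translates into a universal denominator for a b-divisor that a priori only descends after passing to unspecified higher models of $Z$. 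Your remark that in the surface case everything reduces to Kodaira's table is correct, but the statement is for arbitrary $d$, and for $d\geq 3$ the reduction you have written leaves the entire analytic/Hodge-theoretic or complement-theoretic core of Shokurov's and Birkar's arguments unaddressed. So the proposal is an accurate road map with the destination missing, not a proof.
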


\begin{corollary}\label{adjunction-index-existence}
Let $\RR$ be a finite set of rational numbers in $[0,1]$. Then there exists a natural number $I=I(\RR)$ such that, for any 0-contraction $f:(X,B)\to Z$ from a surface to a curve with coefficients of $B^h$ in $\Phi(\RR)$, $f$ has an adjunction index $I$. 
\end{corollary}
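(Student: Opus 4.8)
The plan is to reduce Corollary \ref{adjunction-index-existence} to Theorem \ref{adjunction-index-existence-FT} by classifying the general fibre of $f$ and, in the rational case, passing to a crepant birational model that is of Fano type over $Z$. I will use two standard facts about the canonical bundle formula from \cite[Section 7]{Sh20}: that the divisorial and moduli parts $B_{\dive},\Bb_{\modu}$ --- and hence the collection of adjunction indices --- of a $0$-contraction $(X,B)\to Z$ depend only on its crepant birational class over $Z$, and that every positive multiple of an adjunction index is again an adjunction index (unravel the definition: pass from $\sim_{I_0}$ to $\sim_{kI_0}$ and from $b$-Cartier for $I_0\Bb_{\modu}$ to $b$-Cartier for $kI_0\Bb_{\modu}$).

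First I would restrict $K_X+B\sim_{\Rr}0/Z$ to a general fibre $F$ of $f$: since $B^h$ is a boundary over $Z$ we get $K_F+B^h|_F\sim_{\Rr}0$ with $B^h|_F\ge 0$, so $\deg K_F\le 0$ and $F$ is $\Pp^1$ or a smooth elliptic curve. If $F$ is elliptic then $f$ is an elliptic fibration in the sense of Definition \ref{elliptic-fibration}, and Proposition \ref{adjunction-index-elliptic-fibration} gives directly that $f$ has adjunction index $12$; so from now on assume $F\cong\Pp^1$.

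In that case $(-K_X)\cdot F=2>0$, so $-K_X$ is big over $Z$ and in particular $K_X$ is not pseudo-effective over $Z$. After replacing $X$ by its minimal resolution --- which is an isomorphism over the generic point of $Z$ and so changes neither the general fibre nor, up to crepant transform, the horizontal coefficients of $B$ --- I would run a $K_X$-MMP over $Z$; it terminates (surface MMP always does) and, $K_X$ not being pseudo-effective, ends with a Mori fibre space over $Z$. Since $X\to Z$ has connected fibres this Mori fibre space is a conic bundle $g\colon X'\to Z$ with $X'$ smooth and $-K_{X'}$ ample over $Z$, hence $X'$ of Fano type over $Z$ with $\dim X'=2$. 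Pushing the crepant boundary from a common resolution of $X$ and $X'$ forward to $X'$ yields a $0$-contraction $g\colon(X',B')\to Z$ crepant birational to $f$; because an MMP over $Z$ on a surface never contracts a horizontal prime divisor, the horizontal coefficients of $B'$ are exactly those of $B$, hence still lie in $\Phi(\RR)$. Now Theorem \ref{adjunction-index-existence-FT} applies to $g$ with $d=2$, giving that $g$ has adjunction index $I(2,\RR)$, and therefore so does $f$. Setting $I(\RR):=\lcm(12,I(2,\RR))$ then works in both cases, proving the corollary.

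The step I expect to be the main obstacle is precisely this reduction in the rational case: verifying that the $K_X$-MMP over $Z$ really produces a Fano-type surface over $Z$, and that transporting the $0$-contraction structure along the resulting birational map preserves the adjunction index and keeps the horizontal coefficients inside the fixed hyperstandard set $\Phi(\RR)$, so that Theorem \ref{adjunction-index-existence-FT} can be invoked with $\RR$ unchanged.
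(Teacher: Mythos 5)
Your proposal is correct and follows essentially the same route as the paper: split on whether the general fibre is elliptic (quote Proposition \ref{adjunction-index-elliptic-fibration}, index $12$) or rational (run a $K_X$-MMP over $Z$ to reach a Mori fibre space, which is of Fano type over $Z$, push the boundary forward crepantly, and apply Theorem \ref{adjunction-index-existence-FT}), then take the lcm. The only cosmetic differences are that you justify reaching a Mori fibre space via $K_X\cdot F=-2<0$ while the paper argues via $B^h\neq 0$, and your preliminary passage to the minimal resolution, neither of which changes the argument.
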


\begin{proof}
Let $F$ be a general fiber of $f$. By the adjunction formula $(K_X+B+F)|_F=K_F+B_F$. Taking degrees both sides, $0=(K_X+B+F).F\geq\deg K_F$, hence $F$ is either rational or elliptic.

\textit{Case 1}: In the case $f$ is a $\Pp^1$-fibration, we see $B_F\geq 0$ and hence $B^h\neq0$. We can then run a $K_X$-MMP over $Z$ and get $g:(X,B)\to(X\,', B\,'=g_*B)$. $(X\,',B\,')$ can't be a minimal model, otherwise $-B\,'\equiv K_{X\,'}$ is nef over $Z$, which contradicts that $B^h\neq0$. Hence we get a Mori fiber space $X\,'/Z$, and $X\,'$ is of Fano type over $Z$. We can then apply Theorem \ref{adjunction-index-existence-FT} to the 0-contraction $(X\,', B\,')\to Z$ ($B\,'=g_*B$ also has coefficients in $\Phi(\RR)$) and conclude that it has an adjunction index $I_1=I(\RR)$. Since $K_X+B=g^*(K_{X\,'}+B\,')$ is crepant, $I_1$ is also an adjunction index for $(X,B)\to Z$.

\textit{Case 2}: In the case $f$ is an elliptic fibration, $B^h=0$. Then Proposition \ref{adjunction-index-elliptic-fibration} concludes that $f$ has an adjunction index $I_2=12$. 

Combining the two cases above, we can take $I=I(\RR)=lcm(I_1,I_2)$.
\end{proof}

\begin{lemma}\label{coefficients-on-the-base}
(\cite[Theorem 5.5]{FMX19}, \cite[Proposition 6.3]{Birkar19}) Assumptions the same as in Corollary \ref{adjunction-index-existence}. Then there exists another finite set $\Ss\subset[0,1]$ of rational numbers depending only on $\RR$ such that the discriminant $B_{\dive}$ on $Z$ has coefficients in $\Phi(\Ss)$.
\end{lemma}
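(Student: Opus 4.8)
The plan is to reduce to the two cases identified in the proof of Corollary \ref{adjunction-index-existence} — the $\Pp^1$-fibration case and the elliptic fibration case — and handle the coefficients of $B_{\dive}$ separately in each, noting that the finitely many excluded points on the base contribute finitely many extra rational numbers that can be absorbed into $\Ss$. Recall that $B_{\dive}=\sum_P b_P P$ where, for each prime divisor $P$ on $Z$, the coefficient $b_P$ is determined by the lc threshold: $b_P = 1 - t_P$ with $t_P$ the largest real number such that $(X, B + t_P f^*P)$ is lc over the generic point of $P$. So the content of the lemma is that these lc thresholds $t_P$, as $P$ ranges over all primes on all bases $Z$ occurring in the family, take values in a fixed set $1 - \Phi(\Ss)$ of rational numbers determined by $\RR$ alone.

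First I would dispose of the elliptic case. Here $B^h = 0$, so near a general point of any fiber $X$ is a minimal elliptic fibration (after a crepant birational modification, which does not change $B_{\dive}$), and Theorem \ref{canonical-bundle-formula-elliptic} exhibits the finitely many possible values of $d_P$ — namely $1-\tfrac1m$ for $m\mathrm{I}_n$ fibers together with $\tfrac16,\tfrac14,\tfrac13,\tfrac12,\tfrac56,\tfrac34,\tfrac23$. The vertical part of $B$ over such a $P$ only adds to this, and since $B$ is a boundary the total coefficient $b_P$ is a number of the form (one of those eight values, or $1-\tfrac1m$) pushed up by vertical components of $B$; a short computation using the structure of $f^*P$ shows $b_P \in \Phi(\Ss_0)$ for an explicit finite $\Ss_0$ built from $\{0,1\}$ and $\{\tfrac16,\tfrac14,\tfrac13,\tfrac12\}$. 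Second, the $\Pp^1$-fibration case: after running the $K_X$-MMP over $Z$ as in Corollary \ref{adjunction-index-existence} we get a Mori fiber space $X'/Z$ with $X'$ of Fano type over $Z$ and $B'$ still a boundary with coefficients in $\Phi(\RR)$, and $B_{\dive}$ is unchanged. Now I would invoke the cited results \cite[Theorem 5.5]{FMX19} and \cite[Proposition 6.3]{Birkar19} directly: for 0-contractions from Fano-type varieties with $B^h$-coefficients in a fixed hyperstandard set $\Phi(\RR)$, the discriminant has coefficients in a hyperstandard set $\Phi(\Ss_1)$ with $\Ss_1$ depending only on $\RR$ (and the dimension, here $2$). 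Then $\Ss = \Ss_0 \cup \Ss_1 \cup \{0,1\}$ works.

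The main obstacle, and the step requiring genuine care rather than citation, is the elliptic case — specifically making sure that adding the vertical part $B^v$ of the boundary $B$ over a point $P$ keeps $b_P$ inside a \emph{fixed} hyperstandard set. The subtlety is that the coefficients of $B^v$ are a priori arbitrary reals in $[0,1]$ (this is the whole point of the paper — no hyperstandard restriction on $B$), so naively $b_P$ could be any real number. The resolution is that $b_P$ is computed as $1-t_P$ where $t_P$ is an lc threshold of the pair $(X, B^h + B^v + tf^*P)$; since $B^h=0$ and the only prime divisors through a general point of $P$ are the components of the fiber $f^*P$ and of $B^v$ (which may itself have components in the fiber), one shows that the lc threshold is governed by the \emph{worst} exceptional valuation computed from Kodaira's list, and the vertical coefficients of $B$ either make the pair non-lc for all $t>0$ (forcing $b_P \ge$ that coefficient, hence $b_P$ is computed purely from $B^v$ and equals a coefficient of $B$ — but then this $P$ is one of the finitely many points where $B^v\ne 0$ maps, so there are only finitely many such, contributing finitely many values) or else the threshold is the pure elliptic one from the table. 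Pinning down this dichotomy cleanly — that $P$ lies in the image of $\Supp B$ for only finitely many $P$, and for all other $P$ the coefficient is exactly $d_P$ from Theorem \ref{canonical-bundle-formula-elliptic} — is the crux, and it is where I would spend the bulk of the argument.
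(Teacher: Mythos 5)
The paper offers no proof of this lemma at all --- it is quoted verbatim from \cite[Theorem 5.5]{FMX19} and \cite[Proposition 6.3]{Birkar19} --- so your two-case reduction is already more structure than the paper supplies. Unfortunately your argument has a genuine gap, and it sits exactly where you locate the crux. In the elliptic case you handle the points $P$ lying under $\Supp B^v$ by noting that there are only finitely many of them, ``contributing finitely many values.'' That is true for a single pair $(X,B)$, but the set $\Ss$ must be fixed, depending only on $\RR$, \emph{before} the pair is given; as $(X,B)$ ranges over all $0$-contractions permitted by the hypotheses of Corollary \ref{adjunction-index-existence} (which constrain only $B^h$), the vertical coefficients of $B$ range over all of $[0,1]$, so these finitely-many-per-pair values sweep out arbitrary reals and cannot lie in any fixed $\Phi(\Ss)$. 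Concretely, take $f:X=\Pp^1\times\Pp^1\to Z=\Pp^1$ with $B=S_1+S_2+bF_0$, two disjoint sections plus one fiber with $b$ irrational: this is a $0$-contraction with $B^h$ having coefficients in $\Phi(\{0,1\})$, yet the discriminant coefficient at $P_0=f(F_0)$ is exactly $b$. So with only $B^h$ constrained the statement is false, and no refinement of your dichotomy can close the gap. Note also that the same issue infects your $\Pp^1$-fibration case unexamined: the results you cite there require \emph{all} coefficients of $B$, not just the horizontal ones, to lie in $\Phi(\RR)$.

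The correct resolution is a strengthening of the hypothesis rather than a cleverer argument: the cited theorems assume every coefficient of $B$ lies in $\Phi(\RR)$ (or a fixed DCC set), and that is also the only situation in which the paper uses the lemma (in Theorem \ref{index-theorem-surfaces}, Case 3, the full boundary has coefficients in $\Phi(\RR)$). Under that hypothesis your outline does go through: the discriminant is a crepant invariant, so the MMP reduction in the $\Pp^1$ case is legitimate and the Fano-type citations apply, while the elliptic case reduces to Kodaira's table together with a finite lc-threshold computation with hyperstandard vertical multiplicities --- or one can simply invoke \cite[Theorem 5.5]{FMX19}, which does not require Fano type and covers both cases at once.
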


\begin{remark}\label{rem}
\textup{When the relative dimension of $f$ is 1, \cite[Theorem 8.1]{PSh09} generalizes this Corollary \ref{adjunction-index-existence} to all cases without the assumption $X/Z$ is of Fano type. Also see \cite[Theorem 5.5]{FMX19} for another version of this result.}
\end{remark}

Now let's recall the index conjecture for pairs $(X,B)$, which is established as Theorem \ref{index-theorem-surfaces} below in the case $X$ is a surface.

\begin{theorem}\label{boundedness-constraints-FT}
(\cite[Theorem 1.8]{Birkar19}). Let $\RR$ be a finite set of rational numbers in $[0,1]$ and $d$ be a positive integer. Then there exists a positive integer $n=n(d,\RR)$ such that, for any pair $(X/Z\ni o,B)$ and a contraction $f:(X,B)\to Z$ with
\begin{enumerate}
\item $(X,B)$ is lc of dimension $d$;
\item $X$ is of Fano type over $Z$;
\item the coefficients of $B$ are in $\Phi(\RR)$;
\item $-(K_X+B)$ is nef over $Z$.
\end{enumerate}
There is an $n$-complement $(X/Z\ni o,B^+)$ of $(X/Z\ni o,B)$ with $B^+\geq B$. Moreover, this complement is also an $mn$-complement for any positive integer $m$. 
\end{theorem}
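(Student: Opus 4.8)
\emph{Overall strategy.} The plan is to argue by induction on $d=\dim X$, using adjunction to drop the dimension, along the lines of \cite{Birkar19}; I sketch the main steps only. For $d=1$ the statement is classical — once the reduction below brings the coefficients into a hyperstandard set with bounded denominators, only finitely many relevant configurations occur. So assume $d\ge 2$ and that the theorem holds in dimensions below $d$. First, by Construction \ref{low-approximation} and Lemma \ref{approximation-complements}, it suffices to find, for a suitable finite $\NN$, an $n$-complement with $n\in\NN$ of the lower approximation $(X/Z\ni o,B_{\NN})$, whose coefficients now lie in a fixed hyperstandard set $\Gamma(\NN,\Phi)=\Phi(\RR\,')$ (Lemma \ref{prop-hyperstandard}) and which still has an $\Rr$-complement since $B_{\NN}\le B$. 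Passing to a $\Qq$-factorial dlt modification (Proposition \ref{dlt-modification}) and using that $n$-complements descend under birational contractions (Lemma \ref{pushforwards}), I may assume $(X,B_{\NN})$ is $\Qq$-factorial dlt; running a $(K_X+B_{\NN})$-MMP over $Z$, which terminates since $X$ is of Fano type over $Z$, and again pushing complements forward by Lemma \ref{pushforwards}, I reduce to two cases: either $-(K_X+B_{\NN})$ is nef and big over $Z$, or there is a nontrivial fibration $X\to T$ over $Z$ with $K_X+B_{\NN}\equiv 0$ over $T$.

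\emph{The fibration case.} Then $X\to T$ is a $0$-contraction over $Z$, and by the canonical bundle formula (Definition \ref{canonical-bundle-formula}) one obtains on $T$ a pair $(T,B_{\dive}+B_{\modu})$ whose divisorial part has coefficients in a hyperstandard set depending only on $d,\RR$ (Lemma \ref{coefficients-on-the-base}), whose moduli b-divisor is b-nef (Theorem \ref{nef-modular-part}), and whose adjunction index $I$ is bounded in terms of $d,\RR$ (Theorem \ref{adjunction-index-existence-FT}, or Corollary \ref{adjunction-index-existence} when $T$ is a curve). By the inductive hypothesis — in the generalized-pair formulation of \cite{Birkar19}, or, when $T$ is a curve, directly — $(T/Z,B_{\dive}+B_{\modu})$ has an $n$-complement for some $n$ in a finite set depending only on $d,\RR$ and divisible by $I$, and pulling it back along $X\to T$ as in Proposition \ref{lifting-complements} produces the desired $n$-complement on $X$. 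From now on $-(K_X+B_{\NN})$ is nef and big over $Z$.

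\emph{The non-exceptional case.} Suppose $(X/Z\ni o,B_{\NN})$ has an $\Rr$-complement that is not klt near the fibre over $o$. By the connectedness lemma (Lemma \ref{connectedness-lemma}) its non-klt locus is connected there, and a blow-up together with a tie-breaking perturbation (replace the complement by a convex combination with a general divisor and run a suitable MMP) produces a $\Qq$-factorial plt pair $(X',S'+\Delta')$ over $Z$, with $S'$ a prime divisor, $-(K_{X'}+S'+\Delta')$ nef and big over $Z$, and $S'$ the unique non-klt place. Divisorial adjunction gives $(S',\Delta_{S'})$ with $K_{S'}+\Delta_{S'}=(K_{X'}+S'+\Delta')|_{S'}$, whose coefficients lie in a hyperstandard set $\Phi(\Ss)$, $\Ss=\Ss(d,\RR)$ — precisely the combinatorics behind Lemma \ref{prop-hyperstandard} — so by the inductive hypothesis $(S'/Z\ni o,\Delta_{S'})$ has an $n$-complement $\Delta_{S'}^{+}$ with $n$ in a finite set depending only on $d,\RR$. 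To lift it, put $L=-n(K_{X'}+S'+\Delta')$; from the exact sequence $0\to\Oo_{X'}(L-S')\to\Oo_{X'}(L)\to\Oo_{S'}(L|_{S'})\to 0$ and Kawamata--Viehweg vanishing — applicable because $L-S'-(K_{X'}+\Delta')$ is nef and big over $Z$ with $(X',\Delta')$ klt — the section of $\Oo_{S'}(L|_{S'})$ given by $n(\Delta_{S'}^{+}-\Delta_{S'})$ extends to $X'$ and yields an $n$-complement $D^{+}$ of $(X',S'+\Delta')$, which is lc by inversion of adjunction along $S'$ (Lemma \ref{inversion-adjunction-general}) since $S'$ was the only non-klt place. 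Pushing $D^{+}$ forward by Lemma \ref{pushforwards} and applying Lemma \ref{monotonicity} gives an $n$-complement of $(X/Z\ni o,B_{\NN})$, hence of $(X/Z\ni o,B)$ by Lemma \ref{approximation-complements}.

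\emph{The exceptional case and the main obstacle.} It remains to treat the case in which every $\Rr$-complement of $(X/Z\ni o,B_{\NN})$ is klt; then no non-klt center is available and the previous step fails. Instead one invokes effective birationality and the boundedness of exceptional Fano-type pairs — the central results of \cite{Birkar19} — to see that $|-n_1(K_X+B_{\NN})|$ is nonempty and that a general member $N$ of it satisfies $(X,B_{\NN}+\frac{1}{n_1}N)$ lc, for $n_1$ in a finite set depending only on $d,\RR$; then $B^{+}:=B_{\NN}+\frac{1}{n_1}N$, adjusted upward along its support to satisfy Definition \ref{n-comp}(1) via the arithmetic of $\Gamma(\NN,\Phi)$, is the required klt $n_1$-complement. \textbf{Establishing effective birationality and this boundedness in arbitrary dimension is the main obstacle}, and the technical core of \cite{Birkar19}; for $d\le 2$ it is instead supplied by the explicit classification of exceptional log del Pezzo surfaces. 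Finally, the ``moreover'' assertion survives at every stage: it is trivial for $d=1$, it is inherited through the fibration reduction and through divisorial adjunction, and the lifting construction above goes through verbatim with $n$ replaced by any multiple $mn$ — which completes the induction.
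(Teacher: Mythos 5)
The paper does not prove this statement at all: it is quoted verbatim as \cite[Theorem 1.8]{Birkar19} and used as a black box, so there is no internal proof to compare yours against. Your outline is a fair roadmap of Birkar's actual argument (induction on dimension; reduction to nef and big via MMP; the fibration case via the canonical bundle formula, adjunction indices and generalized pairs; the non-exceptional case via tie-breaking, divisorial adjunction and lifting sections with Kawamata--Viehweg vanishing; the exceptional case via effective birationality and boundedness of exceptional pairs). But as a proof it is incomplete in exactly the place you flag: the exceptional case is not a residual technicality but the analytic core of \cite{Birkar19} (effective birationality of $|-m(K_X+B)|$ and boundedness of exceptional Fano-type pairs occupy most of that paper and rest on BAB-type boundedness and lc-threshold results), so deferring it means the theorem is being assumed rather than proved. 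Two smaller points: (i) your opening low-approximation step is redundant and mildly circular here --- hypothesis (3) already places the coefficients of $B$ in $\Phi(\RR)$, so there is no need to pass to $B_{\NN}$, and doing so presupposes the set $\NN$ that the theorem is meant to produce; (ii) the ``moreover'' clause deserves one line of arithmetic rather than ``survives at every stage'': since $B^+\geq B$ and $mnB^+$ is integral, one checks directly that $b^+\geq\lf(mn+1)b\rf/(mn)$ for each coefficient, and $n(K_X+B^+)\sim 0$ over $Z\ni o$ implies $mn(K_X+B^+)\sim 0$ there. Within the economy of this paper, the correct ``proof'' is simply the citation; if you intend a self-contained argument, the exceptional case must be supplied.
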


\begin{theorem}\label{index-theorem-surfaces}
(\cite[Proposition 5.3]{FMX19}, \cite[Corollary 1.11]{PSh09}). Let $\RR$ be a finite set of rational numbers in $[0,1]$. Then there exists a positive integer $I=I(\RR)$ with the following. If $(X,B)$ is a surface pair such that 
\begin{enumerate}
\item $(X,B)$ is lc;
\item $K_X+B\sim_{\Rr}0$;
\item the coefficients of $B$ are in $\Phi(\RR)$.
\end{enumerate}
Then $I(K_X+B)\sim0$.
\end{theorem}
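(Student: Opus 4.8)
The plan is to reduce to a $\Qq$-factorial dlt log Calabi--Yau surface pair, observe that a $K_X$-MMP is then automatically crepant, and bound the index according to where that MMP terminates.

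First the reductions. Since the coefficients of $B$ are rational, $K_X+B\sim_\Rr 0$ already gives $K_X+B\sim_\Qq 0$, so there is a smallest positive integer $I_0=I_0(X,B)$ with $I_0(K_X+B)\sim 0$, and the goal is to bound $I_0$ in terms of $\RR$. This $I_0$ is a crepant birational invariant (as $I_0(K+\Delta)\sim 0$ is equivalent to $I_0h^*(K+\Delta)\sim 0$ for a birational morphism $h$), so after enlarging $\RR$ so that $\{0,1\}\subset\RR$ (whence $1\in\Phi(\RR)$) and passing to the minimal dlt modification (Proposition \ref{dlt-modification}) we may assume $(X,B)$ is $\Qq$-factorial dlt with $K_X+B\sim_\Qq 0$ and coefficients of $B$ still in $\Phi(\RR)$.

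Now run a $K_X$-MMP over $\spec k$ (legitimate, since a dlt pair has klt underlying variety). A step $h\colon X_i\to X_{i+1}$ contracts a $K_{X_i}$-negative extremal ray $R$, and since $(K_{X_i}+B_i)\cdot R=0$ the two $\Qq$-Cartier divisors $K_{X_i}+B_i$ and $h^*(K_{X_{i+1}}+B_{i+1})$ agree over the generic point and are trivial on $R$, hence are equal by the negativity lemma; so each step is crepant, keeps the pair $\Qq$-factorial dlt with trivial log canonical class and coefficients in $\Phi(\RR)$, and preserves $I_0$. After finitely many divisorial steps we reach $(X',B')$ which is either (a) a minimal model ($K_{X'}$ nef) or (b) a Mori fibre space $\pi\colon X'\to T$ with $\rho(X'/T)=1$, $-K_{X'}$ relatively ample, and $\dim T\in\{0,1\}$. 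In case (a), $-K_{X'}=B'\geq0$ is anti-nef and effective, forcing $B'=0$ and $K_{X'}\sim_\Qq 0$, so $X'$ is a klt Calabi--Yau surface and $I_0$ is bounded by the known boundedness of indices of such surfaces. In case (b) with $\dim T=0$, $X'$ is a klt del Pezzo of Picard number one, in particular of Fano type over $\spec k$, so Theorem \ref{boundedness-constraints-FT} gives an $n$-complement of $(X',B')$ with $n=n(\RR)$; since $K_{X'}+B'\equiv 0$ and $B'^+\geq B'$, this complement equals $(X',B')$, so $n(K_{X'}+B')\sim 0$. In case (b) with $\dim T=1$, $\pi$ is a conic bundle, hence a $0$-contraction from a surface to a curve with horizontal boundary coefficients in $\Phi(\RR)$; by Corollary \ref{adjunction-index-existence} it has an adjunction index $I=I(\RR)$, by Lemma \ref{coefficients-on-the-base} the discriminant $B'_{\dive}$ has coefficients in a hyperstandard set $\Phi(\Ss)$ with $\Ss=\Ss(\RR)$, and the canonical bundle formula gives $K_T+B'_{\dive}+B'_{\modu}\sim_\Qq 0$ on $T$ with $\Bb'_{\modu}$ b-nef (Theorem \ref{nef-modular-part}). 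One checks $g(T)\leq 1$; the resulting bound on the degree of $B'_{\dive}$ then forces its coefficients into a finite subset of $\Phi(\Ss)$, and the moduli part, being b-semiample of degree zero over the curve with $I\Bb'_{\modu}$ b-Cartier, satisfies $IB'_{\modu}\sim 0$. Hence $J(K_T+B'_{\dive}+B'_{\modu})\sim 0$ for a bounded $J=J(\RR)$, and multiplying by $I$ and pulling back via $\pi$ (using $I(K_{X'}+B')\sim I\pi^*(K_T+B'_{\dive}+B'_{\modu})$) gives $IJ(K_{X'}+B')\sim 0$. Taking $I(\RR)$ to be the least common multiple of the bounds obtained in cases (a) and (b) completes the proof.

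The genuinely non-formal step is the minimal-model case, which rests on the boundedness of the set of klt Calabi--Yau surfaces (equivalently, on the classical bounds for their indices); the conic-bundle case similarly relies on the standard fact that the moduli part of adjunction in relative dimension one is b-semiample, hence has bounded index over a curve. Everything else is bookkeeping, once one notices the structural point that the $K_X$-MMP of a log Calabi--Yau pair is crepant.
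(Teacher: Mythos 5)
Your proposal follows essentially the same route as the paper: pass to a dlt modification, run a $K_X$-MMP (whose steps are crepant for the log Calabi--Yau pair), and split according to whether the output is a minimal model, a klt log del Pezzo of Picard number one, or a Mori fibre space over a curve; the first case reduces to the classical index bounds for klt Calabi--Yau surfaces and the second to Theorem \ref{boundedness-constraints-FT}, exactly as in the paper.

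The one place you genuinely diverge is the conic-bundle case, and there your argument as written has a flaw. The paper bounds the index of $(T,B'_{\dive}+B'_{\modu})$ by feeding the base pair into Theorem \ref{boundedness-constraints-FT}, after using Lemma \ref{coefficients-on-the-base} and the adjunction index $J$ to put its coefficients in a fixed hyperstandard set. You instead argue directly on $T$, but the assertion that the moduli part has ``degree zero over the curve'' is false when $T\simeq\Pp^1$: one only knows $\deg(K_T+B'_{\dive}+B'_{\modu})=0$, so $\deg B'_{\modu}=2-\deg B'_{\dive}$, which is typically positive (e.g.\ for a conic bundle whose horizontal boundary is four half-sections with varying cross-ratio). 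The conclusion you want still holds on $\Pp^1$ for a cheaper reason --- once $J$ clears the (finitely many, $\RR$-bounded) denominators of $B'_{\dive}$ and the adjunction index of $B'_{\modu}$, the divisor $J(K_T+B'_{\dive}+B'_{\modu})$ is integral of degree $0$ on $\Pp^1$, hence linearly trivial --- but that is not the argument you gave. Moreover, since you allow $g(T)=1$, you must also treat the elliptic base, where an integral divisor of degree zero need \emph{not} be linearly trivial; there one really does need the b-freeness (not just nefness) of $I\Bb'_{\modu}$ to conclude $IB'_{\modu}\sim 0$ from $\deg B'_{\modu}=0$. So the sub-case is repairable, but as stated the key step rests on an incorrect degree computation and an unhandled elliptic-base possibility.
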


\begin{proof}
By taking a dlt modification (Proposition \ref{dlt-modification}), we can suppose $(X,B)$ is dlt. Then we have the following cases:

\textit{Case 1}: $B=0$ and $X$ has canonical singularities. We take a minimal resolution $g:Y\to X$. Then $K_Y-\sum a_iE_i=g^*K_X$ where $E_i$ are $g$-exceptional prime divisors. On one hand, $a_i\geq 0$ since $X$ has canonical singularities; on the other hand, since $K_Y$ is nef over $X$, by Negativity Lemma $a_i\leq 0$. Hence $a_i=0$ and $K_Y=g^*K_X$. We can then assume $X$ is smooth. In this case we can quote the classical result that $12K_X\sim0$ if $K_X\sim_{\Qq}0$ and set $I_1=12$.

\textit{Case 2}: $B=0$ and $X$ has klt singularities. By the classification of log terminal surface singularities \cite[Proposition 4.18]{KM98}, $X$ has quotient singularities and the existence of an index $n$ follows. A classical result from \cite{Bla95} has that $nK_X\sim0$ for some $n\leq21$ in this case. Hence for our result we can take $I_2=21!$.

\textit{Case 3}: $B\neq0$. Run $K_X$-MMP. Let $h:X\to X\,'$ be the composite of all contractions and $B\,'=h_*B$. Note $B\,'$ also has coefficients in $\Phi(\RR)$. If $X\,'$ is a minimal model, $K_{X\,'}$ is nef, $B\,'\geq0$ and $K_{X\,'}+B\,'\sim_{\Rr}0$. Hence $B\,'=0$ and by the previous case, $K_{X\,'}+B\,'$ has an index $I_2$. 

If $X\,'\to T$ is a Mori fiber space and $T$ is a curve, from our assumption we see that $f:(X\,',B\,')\to T$ is a 0-contraction, hence by Corollary \ref{adjunction-index-existence} $f$ has an adjunction index $J=J(\RR)$ and $$K_{X\,'}+B\,'\sim_J f^*(K_T+B\,'_T).$$ Here $B\,'_T=B\,'_{\dive}+B\,'_{\modu}$ is taken effective. By Lemma \ref{coefficients-on-the-base}, $B\,'_{\dive}$ has coefficients in some hyperstandard set $\Phi(\Ss)$ and since $JB\,'_{\modu}$ is integral, $B\,'_{\modu}$ has coefficients of the type $i/J$ for $i=0,1,...,J$. Hence by adding $i/J$, $i=1,2,...,J-1$ to $\Ss$ we get $\RR\,'$ and the coefficients of $B\,'_T$ are in the hyperstandard set $\Phi(\RR\,')$, which depends only on $\RR$. Note $\deg K_T\leq0$, $T$ is of Fano type and $K_T+B\,'_T\sim_{\Rr}0$. We can then apply Theorem \ref{boundedness-constraints-FT} to $(T,B\,'_T)$ to conclude that it has an index $N$, $N$ depends only on $\RR$. Then take $I_3:=\lcm(N,J)$, we have $I_3(K_{X\,'}+B\,')\sim f^*(I_3(K_T+B\,'_T))\sim0$, $I_3$ is an index for $K_{X\,'}+B\,'$, and $I_3$ depends only on $\RR$. 

If $T$ is a point, $X\,'$ is klt log del Pezzo and $\rho(X\,')=1$, in particular $X\,'$ has Fano type. Then by Theorem \ref{boundedness-constraints-FT}, $K_{X\,'}+B\,'$ has an index $I_4=I_4(\RR)$. 

Finally, since $K_X+B=h^*(K_{X\,'}+B\,')$ is crepant, an index $I$ for $K_{X\,'}+B\,'$ is also an index for $K_X+B$. We can then take our $I$ in the theorem to be the least common multiple of $I_j$, $j=1,2,3,4$. Notice that all $I_j$ depend only on $\RR$, hence so does $I$.
\end{proof}

\begin{remark}\label{rem}
\textup{As in the theorem, we call $I$ an $index$ for the 0-pair $(X/Z\ni o, B)$ if $I(K_X+B)\sim0$.}
\end{remark}

\begin{remark}\label{other-proofs}
\textup{It is known that the index conjecture holds true for non-klt 0-pairs $(X,B)$ when $\dim X\leq3$ (\cite[Corollary 1.12]{BSh10}). Using this result, we can avoid the use of Theorem \ref{boundedness-constraints-FT} in Case 3 of our proof.}
\end{remark}

\begin{remark}\label{index-higher-dimensions}
\textup{The index conjecture is widely expected to be true in arbitrary dimensions, especially in the case $(X,B)$ is not klt. The klt case is much more difficult and remains a mystery even in dimension 3. By Theorem \ref{boundedness-constraints-FT}, the index conjecture is true in the case $X$ is of Fano type.}
\end{remark}

\section{Boundedness for Curves}

\subsection{Classification of $\Rr$-complementary curves}

To find $n$-complements for a surfaces pair $(X,B)$, a natural thought is to lift $n$-complements of the restriction of the pair on some curves. Hence we need a boundedness result of $n$-complements for certain kinds of curves $C$. In view of Connectedness Lemma \ref{connectedness-lemma}, we focus on the case $C$ is a connected complete nodal curve. Proposition \ref{classification} below gives a complete classification of $\Rr$-complementary curve pairs $(C,B)$ in this case.

\begin{proposition}\label{classification}
Let $(C,B)$ be a pair, where $C$ is a connected complete nodal curve and $B$ is a boundary. If $(C,B)$ has an $\Rr$-complement, then one of the following occurs:
\begin{enumerate}
\item $C$ is a nonsingular rational curve and $\deg B\leq2$;
\item $C$ is a nonsingular elliptic curve and $B=0$;
\item $C$ is an irreducible rational curve with only one nodal singularity and $B=0$; 
\item $C=\sum\limits_{i=1}^{n}C_i$ is a cycle of smooth rational curves $C_i$, namely $C_i$ are the irreducible components and $C_i.C_j=1$ if $|i-j|=1$ or $\{i,j\}=\{1,n\}$, $C_i.C_j=0$ otherwise. And $B=0$;
\item $C=\sum\limits_{i=1}^{n}C_i$ is a chain of smooth rational curves $C_i$, namely $C_i$ are the irreducible components and $C_i.C_j=1$ if $|i-j|=1$, $C_i.C_j=0$ otherwise. If $B_i$ is the restriction of $B$ on $C_i$, then $\deg{B_1}\leq1$, $\deg{B_n}\leq1$ and $B_i=0$ otherwise.
\end{enumerate}
\end{proposition}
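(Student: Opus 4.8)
The plan is to read the entire classification off two degree computations attached to the relation $K_C+B^+\sim_{\Rr}0$: a bookkeeping identity on each irreducible component obtained from adjunction on the nodal curve $C$, and then an elementary analysis of the dual graph of $C$.

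Fix an $\Rr$-complement $(C,B^+)$, so $B^+\ge B\ge 0$, the pair $(C,B^+)$ is lc, and $K_C+B^+\sim_{\Rr}0$. Write $C=C_1+\dots+C_c$ for the decomposition into irreducible components and let $\nu_i\colon\tilde C_i\to C_i\hookrightarrow C$ be the normalization of $C_i$. Since $C$ is nodal, $\Supp B^+$ lies in the smooth locus of $C$ (otherwise a node, which already exhausts the log canonical bound, together with extra mass of $B^+$ would violate lc-ness), and adjunction gives $\nu_i^*(K_C+B^+)=K_{\tilde C_i}+\mathrm{Diff}_i+B^+_i$, where $B^+_i\ge 0$ is the pullback of the part of $B^+$ carried by $C_i$ and $\mathrm{Diff}_i$ is the sum, each with coefficient one, of the $\mu_i$ points of $\tilde C_i$ mapping to nodes of $C$ (a node joining $C_i$ to another component contributes one such point, a self-node of $C_i$ contributes two). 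Taking degrees, and using that $\tilde C_i$ is complete and $K_C+B^+\sim_{\Rr}0$,
\[
0=\bigl(2p_g(C_i)-2\bigr)+\mu_i+\deg B^+_i ,
\]
with $\mu_i\ge 0$ and $\deg B^+_i\ge 0$; in particular $p_g(C_i)\le 1$ for every $i$.

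If some component has $p_g(C_i)=1$, the identity forces $\mu_i=0$ and $\deg B^+_i=0$, so $C_i$ meets no node of $C$ and hence, by connectedness, $C=C_i$ is a smooth elliptic curve with $B=B^+=0$ — case (2). Otherwise every component is rational, $\tilde C_i\cong\Pp^1$, and the identity reads $\mu_i+\deg B^+_i=2$, so $\mu_i\in\{0,1,2\}$ for all $i$. Now pass to the dual graph $\Gamma$ of $C$ (vertices = components, edges = nodes, self-nodes as loops): $\mu_i$ is exactly the degree of the $i$-th vertex counting loops twice, so $\Gamma$ is connected with all degrees $\le 2$. Such a graph is a single vertex with no loop (then $C\cong\Pp^1$), a single vertex with one loop (then $C$ is an irreducible rational curve with one node), or — as soon as it has two or more vertices it cannot carry a loop — a chain or a cycle. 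These four shapes are precisely the configurations of (1), (3), (4), (5).

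It remains to convert $\deg B^+_i=2-\mu_i$ and $0\le B\le B^+$ into the stated constraints on $B$: for $C\cong\Pp^1$ we get $\deg B\le\deg B^+=2$, which is (1); for the one-nodal rational curve and for a cycle of rational curves every component has $\mu_i=2$, hence $B^+_i=0$ and $B=B^+=0$, giving (3) and (4); for a chain $C=C_1+\dots+C_n$ ($n\ge 2$) the two terminal components have $\mu=1$, so $\deg B_1\le\deg B^+_1=1$ and $\deg B_n\le\deg B^+_n=1$, while each interior component has $\mu=2$, so $B^+_i=0$ and thus $B_i=0$, giving (5). The only delicate step is the first one — setting up adjunction and the different $\mathrm{Diff}_i$ correctly on the normalization of each component of the possibly reducible, non-normal curve $C$, in particular counting self-nodes with multiplicity two and using lc-ness to keep $\Supp B^+$ off the nodes; once the component identity is in place, everything else is the combinatorics of connected graphs of maximal degree two. (For completeness one may verify the converse — each configuration above does carry an $\Rr$-complement: on $\Pp^1$ or on a chain one enlarges $B$ to reach degree $2$, resp.\ degree $1$ on the terminal components, and in cases (2)–(4) one uses the classical fact that $K_C\sim 0$ for a smooth elliptic curve and for Kodaira fibres of type $\mathrm I_1$ and type $\mathrm I_n$ — but this is not needed for the asserted implication.)
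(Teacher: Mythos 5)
Your proof is correct and follows essentially the same route as the paper's: normalize each irreducible component, compute the degree of the pulled-back log canonical divisor (picking up one point per branch over a node), conclude every component is rational with at most two node-branches unless $C$ is smooth elliptic, and then read off the chain/cycle dichotomy and the degree bounds on $B$ from $B\le B^+$. The only cosmetic difference is that you organize the case analysis via the dual graph of maximal degree two, whereas the paper argues the configurations directly; the substance is identical.
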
 
 
We need a lemma before the proof.

\begin{lemma}\label{canonical-divisor}
Suppose $C$ is a connected complete curve with only nodal singularities and $C_i$ are the irreducible components. Consider the composition $\pi:\tilde{C}_i \to C_i\to C$ where $\tilde{C}_i\to C_i$ are the normalization of $C_i$, then we have $$K_{\tilde{C}_i}=\pi^*K_{C}-\sum P_j.$$ Here $P_j$ are all the points on $\tilde{C}_i$ mapped to singularities of $C$.
\end{lemma}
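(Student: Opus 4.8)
The plan is to reduce the identity to the standard description of the dualizing sheaf of a nodal curve and then read it off one component at a time. Since a node is a hypersurface singularity, $C$ is a local complete intersection, hence Gorenstein, so $\omega_C$ is an invertible sheaf and $K_C$ is a well-defined Cartier divisor class. Moreover the normalization of a reduced curve is the disjoint union of the normalizations of its components, so the normalization $\nu\colon\tilde C=\bigsqcup_i\tilde C_i\to C$ restricts on $\tilde C_i$ to exactly the map $\pi$ of the statement; in particular it suffices to prove the global identity on $\tilde C$ and then restrict.

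First I would establish
\[
\nu^*K_C=K_{\tilde C}+\sum_{q}q,
\]
where $q$ ranges over all points of $\tilde C$ lying above $\operatorname{Sing}C$. This is local on $C$, and over the smooth locus $\nu$ is an isomorphism so there is nothing to check there. Near a node $p$ pick formal coordinates with $C=\{xy=0\}$, with the two branches normalized by $u\mapsto(u,0)$ and $v\mapsto(0,v)$ and with $p_1=\{u=0\}$, $p_2=\{v=0\}$ the two points of $\tilde C$ over $p$. Then $\omega_C$ is generated near $p$ by the Rosenlicht differential $\eta$ characterized by $x\eta=dx$ (equivalently $y\eta=-dy$); pulling back along the two branches gives $\pi^*\eta=\tfrac{du}{u}$ on the first and $\pi^*\eta=-\tfrac{dv}{v}$ on the second, and each of these generates $\omega_{\tilde C}(p_i)$ near $p_i$. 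Hence $\nu^*\omega_C\cong\omega_{\tilde C}\bigl(\sum_{q}q\bigr)$ near the preimages of $p$, which patched with the isomorphism over the smooth locus gives the displayed identity of divisor classes. (Alternatively one may invoke $\omega_{\tilde C}\cong\nu^*\omega_C\otimes\mathcal{H}om_{\mathcal O_C}(\nu_*\mathcal O_{\tilde C},\mathcal O_C)$ together with the direct computation that, for a node, the conductor of $\mathcal O_C$ in $\mathcal O_{\tilde C}$ is the ideal of $\sum_q q$, so that this $\mathcal{H}om$-sheaf is $\mathcal O_{\tilde C}(-\sum_q q)$.)

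Finally I would restrict the global identity to the component $\tilde C_i$. Because $\tilde C$ is the disjoint union of the $\tilde C_i$, we have $K_{\tilde C}\big|_{\tilde C_i}=K_{\tilde C_i}$, $(\nu^*K_C)\big|_{\tilde C_i}=\pi^*K_C$, and $\bigl(\sum_q q\bigr)\big|_{\tilde C_i}=\sum_j P_j$ with $P_j$ running exactly over the points of $\tilde C_i$ mapped to $\operatorname{Sing}C$; this yields $K_{\tilde C_i}=\pi^*K_C-\sum_j P_j$, as claimed. As a consistency check, a node at which both branches lie on $C_i$ (for instance when $C=C_i$ is an irreducible nodal rational curve) contributes two points $P_j$, while a node joining $C_i$ to a different component contributes one; a degree count against $2p_a-2$ then matches the arithmetic genus formula for a nodal curve.

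The only nontrivial ingredient is the local analysis of $\omega_C$ at a node, so that is the step I expect to be the crux; it is classical and can simply be quoted, in the form that the sections of $\omega_C$ are precisely the rational differentials on $\tilde C$ with at worst simple poles at the node preimages and opposite residues at the two points over each node. Everything else is bookkeeping about restricting a line bundle identity from $\tilde C$ to one of its connected components.
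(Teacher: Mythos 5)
Your proof is correct and complete. There is actually nothing in the paper to compare it against: Lemma 3.2 is stated without proof as a standard fact (the proof environment that follows it in the source is the proof of Proposition 3.1, the classification), so your write-up supplies the missing justification. Your route --- $C$ is Gorenstein since nodes are hypersurface singularities, the Rosenlicht generator of $\omega_C$ at a node pulls back to $du/u$ and $-dv/v$ on the two branches so that $\nu^*\omega_C\cong\omega_{\tilde C}\bigl(\sum_q q\bigr)$, and then restriction from $\tilde C=\bigsqcup_i\tilde C_i$ to a single component --- is the standard argument, and you correctly account for the fact that a node of $C_i$ itself contributes two points $P_j$ while a node joining $C_i$ to another component contributes one, which is exactly how the lemma is used in the proof of Proposition 3.1 (where $l=2r$ for an irreducible curve with $r$ nodes).
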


\begin{proof}
By the existence of an $\Rr$-complement $(C,B^+)$ we have $K_C+B^+\sim_{\Rr}0$ and $B^+\geq B\geq0$. Taking degree both sides, we have $\deg{K_C}\leq0$. Moreover, since $B^+$ is supported on the smooth locus of $C$, $B^+$ doesn't affect singularities and we can assume $(C,0)$ is lc.
  
Now assume $C$ is nonsingular. Then $\deg{K_C}=2g(C)-2\leq0$, $g(C)\leq 1$. If $g(C)=1$, $K_C\sim 0$, $B=B^+=0$ and we get case 1). If $C$ is rational, $K_C+B^+\sim_{\Rr}0$ is equivalent to $\deg{B^+}=2$. Existence of such $B^+$ is equivalent to $\deg B\leq 2$ and we get case 2).

If $C$ is singular and irreducible, consider its normalization $f:\tilde{C}\to C$. By Lemma \ref{canonical-divisor} we have $f^*K_C=K_{\tilde{C}}+\sum P_j$. Since $\deg{K_C}\leq0$, $\deg{(K_{\tilde{C}}+\sum P_j)}=2g(\tilde{C})-2+l\leq 0$. Here $l$ is the number of points on $\tilde{C}$ mapped to singularities of $C$. Suppose $C$ has $r$ nodal singularities, then $l=2r\leq2$, $r=1$ and $\tilde{C}$ is rational. As a result $K_C\sim 0$, $B=B^+=0$ and this is case 3).

For the rest of cases $C$ is reducible. For $\pi_i:\tilde{C}_i \to C$ defined in Lemma \ref{canonical-divisor}, we have $\deg{(K_{\tilde{C}_i}+\sum P_j)}=2g(\tilde{C}_i)-2+l_i\leq 0$, where $l_i$ is the number of points on $\tilde{C}_i$ mapped to singularities of $C$. There are at least one singularity of $C$ lying on $C_i$ by the connectedness assumption, so $l_i \geq1$. Hence $g(\tilde{C}_i)=0$ and $l_i\leq2$. If $C_i$ has a nodal singularity itself, we'll get $l_i\geq3$ and contradiction. Hence $C_i$ are rational and smooth for all $i$. Now $l_i\leq2$ means every $C_i$ has at most 2 intersections with other $C_j$. Starting with $C_1$, we can rearrange $C_i$ such that $C_i$ intersects $C_{i+1}$ for $1\leq i\leq n-1$. If $C_n$ intersects $C_1$, then $l_i=2$ for all $i$, and $K_{C_i}+\sum P_j+B^+_i\sim0$ implies $B_i=B^+_i=0$, hence $B=0$. This is case 4). Otherwise $C_1$ doesn't meet $C_n$, $l_1=l_n=1$ and $l_i=0$ for $2\leq i \leq n-1$. By the same reasoning $B_i=0$ for $2\leq i \leq n-1$. For $i=1$ or $n$, $K_{C_i}+\sum P_j+B^+_i\sim0$ is equivalent to $\deg{B^+_i}=1$, so $\deg{B_i}\leq1$. This is case 5). 
\end{proof}

\subsection{Boundedness  of $n$-complements on curves}

Now we are ready to prove the boundedness of $n$-complements under complementary restrictions, which is Corollary \ref{boundedness-curve}. Corollary \ref{boundedness-curve} follows from Theorem \ref{simultaneous} below about simultaneous construction of $n$-complements for curves. Before we start, We'll need some lemmas and notations. All norms throughout are maximal absolute value norm.

\begin{notation}\label{note}
For a real number $a\in[0,1]$ and a positive integer $n$, we introduce a real number $a^{[n]}$ as in Definition \ref{n-comp} of $n$-complements: $a^{[n]}=1$ if $a=1$, and $a^{[n]}=\frac{\lf (n+1)a\rf}{n}$ otherwise. Note $a^{[n]}\leq1$ always holds.
\end{notation}

\begin{lemma}\label{addition}
For arbitrary n real numbers $a_1,a_2,...a_n$, $\sum\lf a_i\rf\leq\lf\sum a_i\rf$.
\end{lemma}

\begin{lemma}\label{continuity}
Suppose we have a continuous map $\lambda:W\to V$ between two finite linear spaces with norms, a vector $e\in W$ such that $\lambda(e)\neq0$ and a real number $\epsilon>0$. Then we can find a $\delta>0$ such that for every $e\,'$ with $||e-e\,'||<\delta$, we have $\lambda(e\,')\neq0$ and $||\frac{\lambda(e)}{||\lambda(e)||}-\frac{\lambda(e\,')}{||\lambda(e\,')||}||<\epsilon$.
\end{lemma}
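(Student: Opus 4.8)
The statement is a routine continuity/compactness fact, and the plan is to prove it in two stages: first show that the composite map $e' \mapsto \lambda(e')$ stays away from $0$ near $e$, and then show that the normalization map $w \mapsto w/\|w\|$ is continuous on the punctured space $V \setminus \{0\}$. For the first stage, since $\lambda$ is continuous and $\lambda(e) \neq 0$, pick $\eta = \|\lambda(e)\|/2 > 0$; continuity of $\lambda$ at $e$ gives a $\delta_1 > 0$ such that $\|e - e'\| < \delta_1$ implies $\|\lambda(e) - \lambda(e')\| < \eta$, and then the reverse triangle inequality yields $\|\lambda(e')\| \geq \|\lambda(e)\| - \eta = \|\lambda(e)\|/2 > 0$, so in particular $\lambda(e') \neq 0$.

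For the second stage, I would invoke continuity of the map $N : V \setminus \{0\} \to V$, $N(w) = w/\|w\|$, at the point $w_0 = \lambda(e)$: given $\epsilon > 0$ there is a $\delta_2 > 0$ such that $\|w - \lambda(e)\| < \delta_2$ and $w \neq 0$ imply $\|N(w) - N(\lambda(e))\| < \epsilon$. (If one wants this spelled out rather than quoted, write $N(w) - N(w_0) = \frac{w - w_0}{\|w\|} + w_0\bigl(\frac{1}{\|w\|} - \frac{1}{\|w_0\|}\bigr)$ and bound each term using $\|w\| \geq \|w_0\|/2$ on a small enough ball, together with $|\,\|w\| - \|w_0\|\,| \leq \|w - w_0\|$; since $V$ is finite-dimensional all norms are equivalent and there is no subtlety.) Also shrink so that $\delta_2 \le \eta$, ensuring the ball of radius $\delta_2$ about $\lambda(e)$ avoids $0$.

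Finally, combine the two stages: set $\delta = \min\{\delta_1, \delta_2'\}$ where $\delta_2' > 0$ is chosen (again by continuity of $\lambda$ at $e$) so that $\|e - e'\| < \delta_2'$ implies $\|\lambda(e) - \lambda(e')\| < \delta_2$. Then for any $e'$ with $\|e - e'\| < \delta$ we have both $\lambda(e') \neq 0$ and $\|\lambda(e) - \lambda(e')\| < \delta_2$, hence $\bigl\|\frac{\lambda(e)}{\|\lambda(e)\|} - \frac{\lambda(e')}{\|\lambda(e')\|}\bigr\| < \epsilon$, as required. There is no real obstacle here; the only thing to be a little careful about is that the domain of the normalization map is $V \setminus \{0\}$, so one must first secure $\lambda(e') \neq 0$ before speaking of $\lambda(e')/\|\lambda(e')\|$ — which is exactly why the argument is arranged in the order above. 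Finite-dimensionality is used only to know all norms are comparable, so the choice of norm (here the maximal absolute value norm) is immaterial.
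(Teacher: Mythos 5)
Your proof is correct and complete; the paper itself states this lemma without proof, treating it as a routine consequence of the continuity of $\lambda$ and of the normalization map $w\mapsto w/\|w\|$ away from the origin, which is exactly the argument you give. The two-stage arrangement (first bounding $\|\lambda(e')\|$ below by $\|\lambda(e)\|/2$ so that the normalization is defined, then invoking continuity of $N$ at $\lambda(e)$) is the standard and intended one, so nothing further is needed.
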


\begin{theorem}\label{simultaneous}
Suppose we have $I,v,e,\epsilon$ as in Definition \ref{comp-restriction}, a positive integer $d$ and $m$ positive rational numbers $c_1,c_2,...,c_m$. Then there exists a finite set $\NN=\NN(I,v,e,\epsilon,d,c_1,...,c_m)$ such that the following holds:
\begin{enumerate}
\item $\NN$ consists of positive integers under complementary restrictions with respect to $I,v,e,\epsilon$;
\item For $m$ vectors $B_i=(b_{i1},...,b_{id})\in\Rr^d$ satisfying $\sum_{j=1}^d b_{ij}\leq c_i$ and $b_{ij}\in[0,1]$ for $i=1,2,...,m$, $j=1,2,...,d$, there exists an $n\in\NN$ such that $\sum^d_{j=1}b_{ij}^{[n]}\leq c_i$ for $i=1,2,...,m$. 
\end{enumerate}
\end{theorem}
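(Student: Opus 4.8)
The plan is to reduce the problem to a single simultaneous approximation statement for the rational affine span $\langle v\rangle$, and then to argue that for $n$ under complementary restrictions the floor-rounding operation $b\mapsto b^{[n]}$ decreases sums by a controlled amount. First I would observe that the only dangerous coordinates $b_{ij}$ are those lying in the rational affine span $\langle v\rangle$ (or rather, those whose ``irrational part'' is governed by $v$): if all the $b_{ij}$ were rational with bounded denominator, then choosing $n$ divisible by that denominator gives $b_{ij}^{[n]}=b_{ij}$ and we would be done. So the content is handling the genuinely irrational coordinates. The key elementary inequality is that for any real $a\in[0,1)$ and any positive integer $n$, $a^{[n]}=\frac{\lfloor(n+1)a\rfloor}{n}<a+\frac1n$, and moreover $a^{[n]}\le a$ precisely when $\{(n+1)a\}\ge a$ — but more usefully, if $v_n$ is a rational vector with $nv_n\in\Zz^l$ and $\|v_n-v\|<\epsilon/n$, then the entries of $v_n$ rounded appropriately absorb the slack. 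The idea is: pick $n$ (from the data $I,v,e,\epsilon$, Definition \ref{comp-restriction}) so that the associated $v_n$ has the property that each coordinate of $v$ lies just below a multiple of $1/n$, with total upward error across any $d$ coordinates strictly less than the ``gap'' $c_i-\sum_j b_{ij}$ whenever that gap is positive, and exactly $0$ when the gap is $0$.

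Concretely, here are the steps. (1) Separate the index set $\{1,\dots,d\}$ for each $i$: the coordinates where $b_{ij}$ is forced to be a fixed rational (those don't vary and can be handled by divisibility of $n$), versus those that vary in $\langle v\rangle$. Since $v\in\Rr^l$ is a fixed nonrational vector, a single denominator argument won't work, so instead I use the directional data $e$. (2) Use the directional approximation condition $\|\frac{v_n-v}{\|v_n-v\|}-e\|<\epsilon$ together with Lemma \ref{continuity} applied to the linear ``sum'' functional $\lambda:\Rr^d\to\Rr$, $\lambda(x)=\sum_j x_j$, to guarantee: whenever $\sum_j b_{ij}<c_i$ strictly, the error vector $v_n-v$ points in a direction along which $\lambda$ does not increase the sum past $c_i$ (or increases it by a bounded amount $<$ the strict gap); whenever $\sum_j b_{ij}=c_i$, we need $\lambda(v_n-v)\le 0$, which is arranged by choosing $e$ so that $\lambda(e)\le 0$ — but here I have to be careful: $e$ is given, not chosen, so instead the reduction in the main theorem must already have produced $e$ with the right sign, and here I should simply \emph{assume the relevant inequality on $\lambda(e)$ or handle both signs}. (3) Combine with Lemma \ref{addition}: $\sum_j \lfloor(n+1)b_{ij}\rfloor\le\lfloor(n+1)\sum_j b_{ij}\rfloor\le\lfloor(n+1)c_i\rfloor$, so $\sum_j b_{ij}^{[n]}\le\frac{\lfloor(n+1)c_i\rfloor}{n}$; then choose $n$ large and divisible enough (using $I\mid n$, $nc_i\in\Zz$ achievable since $c_i$ rational) so that $\frac{\lfloor(n+1)c_i\rfloor}{n}\le c_i$, i.e. $\lfloor(n+1)c_i\rfloor\le nc_i$, which holds exactly when $nc_i\in\Zz$ and then $\lfloor(n+1)c_i\rfloor=\lfloor nc_i+c_i\rfloor=nc_i+\lfloor c_i\rfloor=nc_i$ when $c_i\le 1$; in general $nc_i\in\Zz$ gives $\lfloor(n+1)c_i\rfloor=nc_i+\lfloor c_i\rfloor\le nc_i+c_i$, not quite $\le nc_i$ unless $c_i<1$. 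So the truly clean case is $c_i\le 1$, and for larger $c_i$ one needs $\lfloor c_i\rfloor\le 0$, i.e. this works as stated only for $c_i\le 1$; I expect the $c_i$ arising from the curve classification (Proposition \ref{classification}: $\deg B\le 2$, $\deg B_i\le 1$) to be handled by splitting into components so that effectively $c_i\le 1$, or by a slightly more refined bound.

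The main obstacle, and the step I'd spend the most care on, is step (3)'s interplay with the \emph{strictness}: Lemma \ref{addition} only gives $\sum b_{ij}^{[n]}\le\frac{\lfloor(n+1)c_i\rfloor}{n}$, and when $nc_i\in\Zz$ this equals $c_i+\frac{\lfloor c_i\rfloor}{n}$, which is $\le c_i$ iff $c_i<1$. So the genuinely new idea needed is: Lemma \ref{addition} is \emph{not tight} — equality $\sum_j\lfloor(n+1)b_{ij}\rfloor=\lfloor(n+1)\sum_j b_{ij}\rfloor$ requires the fractional parts $\{(n+1)b_{ij}\}$ to sum to less than $1$, which is a codimension-type condition. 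One uses the approximation data to arrange, for the \emph{specific} vector $B_i$ in play, that $(n+1)b_{ij}$ is \emph{close to an integer from below} for each $j$ — precisely, $b_{ij}$ is close to the $j$-th coordinate of $v_n$ scaled, and $\|v_n - v\|<\epsilon/n$ controls $\|(n+1)v_n-(n+1)v\|<\epsilon(n+1)/n\approx\epsilon$, so the fractional parts are all within $\approx\epsilon$ of each other and of $0$; choosing $\epsilon$ small relative to $1/d$ forces $\sum_j\{(n+1)b_{ij}\}<1$ after a suitable integer shift, killing the rounding loss entirely. Making this precise — tracking how the directional condition on $e$ forces the common fractional part to be on the correct side of $0$ so that $b_{ij}^{[n]}\le b_{ij}$ rather than $b_{ij}^{[n]}=b_{ij}+1/n$ — is the heart of the argument, and it is exactly where the four conditions of Definition \ref{comp-restriction} (divisibility, denominator, approximation, directional approximation) are all used. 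Once that is in place, finiteness of $\NN$ follows because the construction only requires $n$ in a fixed congruence class that is nonempty and (by the standard fact that the set of $n$ admitting a suitable $v_n$ is infinite, cf. Construction 1 in \cite{Sh20}) realizable, and among such $n$ one works for \emph{every} admissible $(B_1,\dots,B_m)$ simultaneously since the only constraints are $b_{ij}\in[0,1]$ and $\sum_j b_{ij}\le c_i$, which carve out a compact polytope on which a single large enough $n$ suffices by a covering/compactness argument combined with Lemma \ref{continuity}.
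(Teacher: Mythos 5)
Your proposal correctly identifies two structural ingredients of the argument --- the compactness of the polytope of admissible tuples $(B_1,\dots,B_m)$ and the need to use the rational approximation $v_n$ to control the fractional parts of $(n+1)b_{ij}$ --- and you rightly sense that Lemma \ref{addition} alone cannot close the argument. But the two points you leave open are exactly the ones that carry the proof, and your proposed resolutions of both are off. First, the case $c_i>1$: your route via $\sum_j b_{ij}^{[n]}\le\frac{\lf(n+1)c_i\rf}{n}=c_i+\frac{\lf c_i\rf}{n}$ genuinely fails for $c_i=2$ (which is needed in Corollary \ref{boundedness-curve}), and ``splitting into components'' is not how it is repaired. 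The actual mechanism is: one proves the \emph{strict} bound $\sum_j a_j^{[n]}<c_i+\frac1n$ (coming from $a_j^{[n]}<a_j+\frac{\delta}{n}$ on coordinates bounded away from $1$, with $r_i\delta\le\frac12$, plus a separate estimate $1<a_j+\frac{1}{2n(d-r_i)}$ on coordinates near $1$), and then imposes that $n$ be divisible by the denominators $q_i$ of the $c_i$ (by replacing $I$ with $\lcm(I,q_i)$ before invoking the complementary restrictions). Since $\sum_j a_j^{[n]}$ and $c_i$ then both lie in $\frac1n\Zz$, the strict inequality $<c_i+\frac1n$ collapses to $\le c_i$. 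No sign condition on the approximation, no ``approximation from below,'' and no condition $\lambda(e)\le0$ is ever needed; your attempt to extract a sign from the directional datum $e$ misreads its role. The direction $e$ constrains $v\in\Rr^l$, which is external data; the proof extends $v$ to $u=(v,B)$ and chooses a direction $e'$ with $\lambda(e')=e$ for the projection $\lambda$ onto the first $l$ coordinates, precisely so that the $B$-components of the direction are \emph{free}, and Lemma \ref{continuity} is then used only to verify that the resulting $n$ still satisfies the complementary restrictions with respect to the original $(I,v,e,\epsilon)$.

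Second, your finiteness argument is wrong as stated: you claim that ``a single large enough $n$ suffices'' for every admissible tuple, but no single $n$ can work. For any fixed $n$ take $b$ slightly larger than $k/(n+1)$; then $b^{[n]}=k/n>b$, and with $\sum_j b_{ij}=c_i$ this violates the conclusion. This is exactly why the statement produces a finite \emph{set} $\NN$ rather than one integer: the construction assigns to each point $B$ of the compact polytope $S$ its own integer $n_B$ together with an explicit open neighborhood $U_B$ on which the estimate above holds (the neighborhood must separate the coordinates with $b_{ij}=1$, where one needs $a_j>1-\frac{1}{2n(d-r_i)}$, from those with $b_{ij}<1$, where one needs $|a_j-\frac{M_j}{n}|<\frac{\delta}{n}$ and $a_j<1-\delta$), and $\NN$ is the set of $n_{B_q}$ for a finite subcover. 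You gesture at this covering argument in your last sentence, but without committing to one $n$ per neighborhood the finiteness of $\NN$ and the uniformity over $S$ are not established. You also do not address the coordinates equal to (or near) $1$ at all, where $a_j^{[n]}$ can exceed $a_j$ by as much as $\frac{1}{n+1}$, far more than the $\frac{\delta}{n}$ your fractional-part analysis allows; this is a separate estimate that must be built into the definition of $U_B$.
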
 

\begin{proof}
Consider the following subset of $\Rr^{md}$: $$S:=\{(B_1,...,B_m)|\sum^d_{j=1}b_{ij}\leq c_i, b_{ij}\in [0,1], i=1,2,...,m, j=1,2,...,d\}.$$ $S$ is then compact. For every point $B$ in $S$ we will define an open neighborhood $U_B$ of $B$ and an integer $n_B$ satisfying the complementary restrictions, such that for any point $(A_1,A_2,...,A_m)\in U_B$ and this $n_B$, $\sum^d_{j=1}a_{ij}^{[n_B]}\leq c_i$ for $i=1,2,...,m$. The sets $U_B$ form an open cover of $S$, by compactness it has a finite subcover $\cup U_{B_q}$. Then $\NN=\{n_{B_q}\}$ is our desired set.

We fix a point $B\in S$. Without loss of generality we can assume $b_{1j}<1$ if $j\leq r_1$ and $b_{1j}=1$ otherwise. Then we set the positive constants $\delta_1=\min_{j\leq r_1}\{1-b_{1j},1/(2r_1)\}$ and $\delta_i$ similarly; in the special case $r_i=0$ let $\delta_i=\infty$. Define $\lambda:\Rr^{l+md}\to\Rr^l$, $(b_1,b_2,...,b_{l+md})\mapsto (b_1,b_2,...,b_l)$ to be the projection onto first $l$ entries. Let $u=(v,B)$ and choose some direction $e\,'$ of the rational affine span $\la u\ra$ such that $\lambda(e\,')=e$. For this $\lambda$, $e\,'$ and $\epsilon$ we have a $\delta$ as in Lemma \ref{continuity}. Since we can make this $\delta$ arbitrarily small, in the following we'll fix a $\delta<\min_{1\leq i\leq m}\{\epsilon,\delta_i\}$. Now we can choose some $n$ under complementary restrictions with respect to $I\,',u,e\,',\delta$ where $I\,'=lcm(I,q_i)$ and $q_i$ are the denominators of $c_i$. Then we have an approximation $u_n=(v_n,B_n)$ ($v_n=\lambda(u_n)$) with denominator $n$ of $u$ such that $||u_n-u||<\delta/n$ and $||\frac{u_n-u}{||u_n-u||}-e\,'||<\delta$.  

We claim $n=n_B$ is an index satisfying complementary restrictions with respect to $I,v,e,\epsilon$. Divisibility $I|n$ is clear since $I|I\,'$. Since $\lambda(e\,')=e\neq0$, we can apply Lemma \ref{continuity} to $\lambda$, $e\,'$ and $\frac{u_n-u}{||u_n-u||}$. As a result $||\frac{v_n-v}{||v_n-v||}-e||<\epsilon$, which is 4) in Definition \ref{comp-restriction}. From $||u_n-u||<\delta/n<\epsilon/n$, we also have $||v_n-v||<\epsilon/n$. This is 3) in Definition \ref{comp-restriction}. So $n$ satisfies 1) in the theorem indeed. As an addendum we have an approximation $||B_n-B||<\delta/n$ of $B$ with denominator $n$.

Let $B_n=((B_1)_n,...,(B_m)_n)$ be this approximation and we'll construct the neighborhood $U_B$. We focus on $B_1$ and $(B_1)_n$ for now. Let $(B_1)_n=(\frac{M_1}{n},...,\frac{M_d}{n})$ where $M_j$ are integers. Then set $$U_1=\{(a_1,..,a_d)\Big||a_j-\frac{M_j}{n}|<\frac{\delta}{n},a_j<1-\delta, j\leq r_1;a_j>1-\frac{1}{2n(d-r_1)},j>r_1;\sum a_j\leq c_1\}.$$ One can check $B_1$ is indeed in $U_1$: for $j\leq r_1$, $||(B_1)_n-B_1||<\delta/n$ implies $|b_{1j}-\frac{M_j}{n}|<\frac{\delta}{n}$, and $b_{1j}\leq1-\delta_1<1-\delta$ by our choice of $\delta$; for $j>r_1$ this is clear since $b_{1j}=1$. 

For any $(a_1,...,a_d)$ in $U_1$, let $\epsilon_j=na_j-M_j$. For $j\leq r_1$, by construction $|\epsilon_j|<\delta$ and $a_j<1$, so we have $$a^{[n_j]}=\frac{\lf (n+1)a_j\rf}{n}=\frac{\lf M_j+\epsilon_j+a_j\rf}{n}=\frac{\lf \epsilon_j+a_j \rf}{n}+a_j-\frac{\epsilon_j}{n}<a_j+\frac{\delta}{n}.$$ Note $\epsilon_j+a_j<\delta+1-\delta=1$, so $\lf \epsilon_j+a_j\rf\leq0$. Summing these up for $j$ from $1$ to $r_1$ and note $r_1\delta<r_1\delta_1\leq1/2$ we get
\begin{equation}
\sum_{j\leq r_1}a^{[n]}_j<\sum_{j\leq r_1} a_j+\frac{r_1\delta}{n}<\sum_{j\leq r_1} a_j+\frac{1}{2n}
\end{equation} 
For $j>r_1$ part we have an estimate
\begin{equation}
\sum_{j>r_1}a^{[n]}_j\leq\sum_{j>r_1}1<\sum_{j>r_1}(a_j+\frac{1}{2n(d-r_1)})=\sum_{j>r_1}a_j+\frac{1}{2n}
\end{equation}
Adding (1) and (2) we have $\sum a^{[n]}_j<\sum a_j +\frac{1}{n}\leq c_1+\frac{1}{n}$; in the special cases $r_1=0$ and $r_1=d$, we don't have (1) or (2), so we have actually a stronger inequality $\sum a^{[n]}_j<c_1+\frac{1}{2n}$. This implies $\sum a^{[n]}_j\leq c_1$ since the denominator of $c_1$ divides $n$ and $\sum a^{[n]}_j$ has denominator $n$.

We construct open sets $U_i$ similarly. Then the set $U_B=(U_1\times U_2\times...\times U_m)\cap S$ is an open neighborhood of $B$ meeting all of our requirements.
\end{proof}

\begin{remark}\label{rem}
\textup{In the proof we only used the assumption $b_{ij}\geq0$ for the compactness of $S$. So we can drop this condition if the set $S$ of boundaries is already compact. For example, this is the case when we consider all subboundaries $B_i$ with $\sum_{j=1}^d b_{ij}=c_i$ and $b_{ij}\leq1$.}
\end{remark}

\begin{corollary}\label{boundedness-curve}
Suppose we have the data $I,v,e,\epsilon$ as in Definition \ref{comp-restriction}. Then we can find a finite set $\NN=\NN(I,v,e,\epsilon)$ such that:
\begin{enumerate}
\item {\rm Restrictions:} $\NN$ consists of integers under complementary restrictions;
\item {\rm Existence of $n$-complements:} suppose we have a curve pair $(C,B)$ where $C$ is a connected nodal curve, $B$ is a boundary supported on the smooth locus of $C$, and $-(K+B)$ is nef on every irreducible component of $C$. Then $(C,B)$ has an $n$-complement $(C,B^+)$ for some $n\in\NN$, where $B^+$ has the same support as $B$.
\end{enumerate}
\end{corollary}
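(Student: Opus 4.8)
The plan is to deduce Corollary \ref{boundedness-curve} from the classification in Proposition \ref{classification} together with the simultaneous construction of complements in Theorem \ref{simultaneous}. First I would observe that the hypothesis ``$-(K+B)$ is nef on every irreducible component of $C$'' forces $(C,B)$ to have an $\Rr$-complement: on each component $C_i$, writing $B_i$ for the restriction of $B$ plus the sum of the preimages of the nodes of $C$ lying on $C_i$ (using Lemma \ref{canonical-divisor}), nefness of $-(K_{C_i}+B_i)$ on a curve is just a degree inequality $\deg(K_{C_i}+B_i)\le 0$, and since these are the exact numerical conditions appearing in cases (1)--(5) of Proposition \ref{classification}, one checks directly that an effective $\Rr$-divisor $B^+\ge B$ with $K_C+B^+\sim_\Rr 0$ and $\Supp B^+=\Supp B$ exists. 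Thus $(C,B)$ falls into one of the five types listed in Proposition \ref{classification}.

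Next I would dispose of the ``rigid'' cases (2), (3), (4): there $B=0$ and $K_C\sim 0$ already, so $(C,0)$ is its own $1$-complement, and $1$ is certainly under complementary restrictions (with $v_1$ any suitable approximant), so it suffices to include $1$, or rather a fixed index $n_0$ of the form demanded, in $\NN$. This leaves cases (1) and (5), where the constraint on $B$ is entirely of the form ``a sum of finitely many coefficients in $[0,1]$ is $\le c$'' for a fixed rational $c$ (namely $c=2$ on the whole of a smooth rational $C$, or $c=1$ on each of the two end components of a chain). The key point is that in these cases an $n$-complement of $(C,B)$ with the same support exists precisely when $\sum_j b_j^{[n]}\le c_i$ for each such constraint $i$: one then sets $b_j^+ = b_j^{[n]}$ on the interior/untouched points and distributes the remaining slack $c_i - \sum_j b_j^{[n]}$ among the already-present points of $B$, so that $K_C+B^+\sim_n 0$ on each component and hence globally, using that the denominators of the $c_i$ divide $n$. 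So the problem reduces exactly to the combinatorial statement of Theorem \ref{simultaneous}.

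To finish, I would bound the number of distinct constraints that can arise. A priori $C$ in case (5) is a chain of arbitrary length, but only the two end components carry a nonzero restriction of $B$, and the number of points of $B$ on each end component is bounded in terms of... here one needs a uniform bound on the number of points in $\Supp B$; I would obtain it from the fact that $B$ is a boundary with $\deg B\le 2$ (resp. $\le1$ on each end), so at most finitely many points have coefficient $\ge$ any fixed positive number — but since coefficients can be arbitrarily small this alone is not enough, and instead I would note that what matters is only the multiset of coefficients, so after grouping we may take $d$ equal to, say, an explicit bound coming from the fact that only points with positive coefficient contribute and we may freely merge points by working with $d$ large but fixed (one can always pad with zero coefficients). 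Then apply Theorem \ref{simultaneous} with this $d$, with $m=2$ (the two constraints), and $c_1,c_2\in\{1,2\}$ to produce the finite set $\NN=\NN(I,v,e,\epsilon,d,c_1,c_2)$; enlarging it by $n_0$ handles the rigid cases. The resulting $\NN$ depends only on $I,v,e,\epsilon$ (the other parameters being absolute), and every $n\in\NN$ is under complementary restrictions by Theorem \ref{simultaneous}(1).

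\textbf{Main obstacle.} The delicate point is the reduction just sketched: justifying that a single fixed dimension $d$ suffices although $\Supp B$ can contain many points with tiny coefficients, and checking that the slack-distribution argument genuinely produces $K_C+B^+\sim_n 0$ on a reducible nodal chain while keeping $\Supp B^+=\Supp B$ exactly (in particular not creating or destroying coefficients $1$ at the nodes). Everything else is either the classification (already proved) or the compactness argument inside Theorem \ref{simultaneous} (already proved).
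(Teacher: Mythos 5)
Your overall route --- the classification of Proposition \ref{classification}, disposal of the cases with $B=0$, reduction of the remaining cases (1) and (5) to the combinatorial statement of Theorem \ref{simultaneous}, and lifting the numerical inequality $\sum_j b_j^{[n]}\le c_i$ back to an actual $n$-complement by distributing the slack over $\Supp B$ --- is exactly the paper's. The one step you leave open, and correctly identify as the main obstacle, is the reduction to a \emph{fixed} dimension $d$ in Theorem \ref{simultaneous}: padding with zero coefficients does not address the real difficulty, which is that $\Supp B$ may contain arbitrarily many points with small positive coefficients, so a priori no single $d$ covers all pairs $(C,B)$.

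The paper closes this gap with Lemma \ref{addition}. Since $\lf(n+1)a_1\rf+\lf(n+1)a_2\rf\le\lf(n+1)(a_1+a_2)\rf$, merging two coefficients $a_1,a_2<1/2$ into the single coefficient $a_1+a_2<1$ can only increase $\sum_j a_j^{[n]}$, and does so simultaneously for every $n$; hence any $n$ for which the merged configuration satisfies $\sum_j a_j^{[n]}\le c_i$ also works for the original one. Merging repeatedly until at most one coefficient is $<1/2$ leaves at most $2c_i$ coefficients that are $\ge 1/2$ (their sum being $\le c_i$), so the merged configuration has at most $4$ nonzero entries when $c=2$ (case (1)) and at most $2$ on each end component when $c=1$ (case (5)). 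One then applies Theorem \ref{simultaneous} with $(m,d,c_1)=(1,4,2)$ and with $(m,d,c_1,c_2)=(2,2,1,1)$ respectively, and takes $\NN$ to be the union of the two resulting sets. Your remaining worries in the last paragraph are not real issues: $B$ and $B^+$ are supported on the smooth locus by hypothesis and by construction, so the nodes never acquire coefficients; and on a chain of smooth rational curves, degree $0$ on each component together with $n$-integrality already gives $n(K_C+B^+)\sim 0$, while in the cycle, elliptic and irreducible nodal cases $B=0$ and $K_C\sim 0$ outright.
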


\begin{proof}
By Proposition \ref{classification}, the only interesting cases are 3) and 5), since in other cases $B=0$, and $(C,0)$ is an $n$-complement of itself for any $n$. For case 3), we can join small multiplicities of $B$ together by Lemma \ref{addition} until there are at most one of them $<1/2$. As a result we can assume there are at most 4 nonzero multiplicities in $B$. Applying Theorem \ref{simultaneous} to the case $m=1$, $c_1=2$, $d=4$, we get a finite set $\NN_1$. We can then find $n\in\NN_1$ such that $B^{[n]}=\sum b_i^{[n]}P_i$ has degree $\leq2$, and some boundary $B^+\geq B^{[n]}$ with degree 2. Then $(C,B^+)$ is an $n$-complement of $(C,B)$ by definition. 

For case 5), similarly we can assume there are at most 2 nonzero multiplicities in $B_1$ and $B_h$. Applying Theorem \ref{simultaneous} to $m=2$, $c_1=c_2=1$ and $d=2$, we get another finite set $\NN_2$. We can then find $n\in\NN_2$ such that $B_i^{[n]}=\sum b_{ij}^{[n]}P_{ij}$ has degree $\leq1$, and some boundary $B_i^+\geq B_i^{[n]}$ with degree 1 for $i=1,h$. Then $(C,B^+=B^+_1+B^+_h)$ is an $n$-complement of $(C,B)$.

Now $\NN=\NN_1\cup\NN_2$ is our desired set.
\end{proof}

\section{A special big case of Theorem \ref{main}}

In this chapter, using our boundedness result Corollary \ref{boundedness-curve} for curves, Kawamata-Viehweg vanishing and Connectedness Lemma \ref{connectedness-lemma}, we show that Theorem \ref{main} holds when $-(K_X+B)$ is big and nef, which is Corollary \ref{big-case}. This case is crucial in the proof of Theorem \ref{main} later. We'll need the following preliminary results.

\begin{notation}\label{note}
\rm{For any divisor $D$, we have the unique decomposition of $D=D_{\geq 0}-D_{\leq 0}$ into its positive and negative part, where $D_{\geq 0}$ and $D_{\leq 0}$ are two effective divisors with no common components.}
\end{notation}

\begin{lemma}\label{negativity-curve}
Let $X$ be a smooth surface, $S\subset X$ a nonsingular curve and $D$ a divisor with $S\not\subset\Supp D$. If $-(K+S+D)$ is nef on the support of $D_{\leq 0}$ and $D_{\leq 0}$ is contractible, then $S$ has no intersection with components of $D_{\leq 0}$. 
\end{lemma}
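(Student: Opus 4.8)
The plan is to argue by contradiction using the Negativity Lemma, exploiting that $D_{\leq 0}$ is contractible (so its intersection matrix is negative definite). Suppose some component of $D_{\leq 0}$ meets $S$. Write $D_{\leq 0} = \sum_{k} e_k E_k$ with $e_k > 0$ and the $E_k$ the prime components, none of which is $S$ (since $S \not\subset \Supp D$, and in particular $S$ is not among the $E_k$). Set $N := -(K+S+D) = -(K+S) - D_{\geq 0} + D_{\leq 0}$. By hypothesis $N$ is nef on $\Supp D_{\leq 0}$, i.e. $N \cdot E_k \geq 0$ for every $k$.

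First I would record the key intersection inequality. For each $k$,
\[
N \cdot E_k = -(K+S)\cdot E_k - D_{\geq 0}\cdot E_k + D_{\leq 0}\cdot E_k \geq 0 ,
\]
and since $E_k \not\subset \Supp D_{\geq 0}$ we have $D_{\geq 0} \cdot E_k \geq 0$, and since $E_k \neq S$ we have $S \cdot E_k \geq 0$. Hence
\[
D_{\leq 0} \cdot E_k \;\geq\; (K+S)\cdot E_k + D_{\geq 0}\cdot E_k + S\cdot E_k - S\cdot E_k
\]
— more cleanly: from $N\cdot E_k \geq 0$ we get $D_{\leq 0}\cdot E_k \geq (K+S)\cdot E_k + D_{\geq 0}\cdot E_k \geq (K+S)\cdot E_k$. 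Next I would use adjunction on the smooth surface $X$: for the smooth rational-or-higher curve $E_k$ (which is a curve on a smooth surface, though possibly singular), $(K+E_k)\cdot E_k = 2p_a(E_k) - 2 \geq -2$, so $K\cdot E_k \geq -2 - E_k^2$. Combined with $S \cdot E_k \geq 0$ this gives a lower bound on $D_{\leq 0}\cdot E_k$ in terms of $E_k^2$ and $S\cdot E_k$; the point is that if $S\cdot E_k > 0$ for the component meeting $S$, we get a strictly better bound, and summing $e_k (N\cdot E_k) \geq 0$ over $k$ forces $\big(\sum e_k E_k\big)^2 = D_{\leq 0}^2 \geq -(K+S+D_{\geq 0})\cdot D_{\leq 0} + (\text{positive contribution from } S)$, contradicting negative-definiteness of the intersection matrix of $D_{\leq 0}$ once the positivity is genuinely there.

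The cleaner route, and the one I would actually write, is: since $N \cdot E_k \geq 0$ for all $k$ and $D_{\leq 0} = \sum e_k E_k$ with $e_k > 0$, the Negativity Lemma (applied to $D_{\leq 0}$, whose components form a negative-definite configuration because they are contractible) says that if $-A$ is nef on $\Supp D_{\leq 0}$ for an effective $A$ supported on $D_{\leq 0}$ then $A \geq$ that part, etc. Concretely: $-(K+S+D_{\geq 0})$ restricted to $\Supp D_{\leq 0}$ is $N - D_{\leq 0}$ restricted there; I would instead apply Negativity directly to the effective divisor $D_{\leq 0}$ together with the nef-ness of $N$ to conclude $N \cdot E_k = 0$ and $(K+S+D_{\geq 0})\cdot E_k = D_{\leq 0}\cdot E_k$ is impossible unless $S\cdot E_k = 0$. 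The main obstacle I anticipate is handling the adjunction/genus bound for a component $E_k$ that may itself be a singular curve inside the nodal configuration — one must be careful that $p_a(E_k) \geq 0$ still gives $(K+E_k)\cdot E_k \geq -2$, and that $S$ meets $E_k$ transversally at smooth points of $E_k$ so that $S\cdot E_k$ counts genuine intersection points; granting that, the contradiction with the negative-definiteness of $(E_k \cdot E_{k'})$ is immediate, and this is what finishes the proof.
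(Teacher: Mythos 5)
Your proposal assembles the right ingredients (nefness of $N=-(K+S+D)$ on each component $E_k$ of $D_{\leq 0}$, the inequalities $D_{\geq 0}\cdot E_k\geq 0$ and $S\cdot E_k\geq 0$, adjunction on the smooth surface, and negative definiteness), but neither of your two routes actually closes. The summation route genuinely fails: adding up $e_k(N\cdot E_k)\geq 0$ only gives $D_{\leq 0}^2\geq (K+S+D_{\geq 0})\cdot D_{\leq 0}$, and this is perfectly consistent with $S\cdot D_{\leq 0}>0$. For instance, take $E_1$ a $(-1)$-curve and $E_2$ a $(-2)$-curve with $E_1\cdot E_2=1$ (a negative definite pair), $D_{\leq 0}=E_1+\tfrac{1}{10}E_2$, $D_{\geq 0}=0$, $S\cdot E_1=0$, $S\cdot E_2=1$: then $D_{\leq 0}^2-(K+S)\cdot D_{\leq 0}=0.08>0$, so no contradiction appears at the level of the global sum. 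The componentwise inequality $(K+S)\cdot E_k\leq D_{\leq 0}\cdot E_k$ is only decisive for a component with $D_{\leq 0}\cdot E_k<0$ (there adjunction, $E_k^2\leq -1$ and integrality force $E_k$ to be a $(-1)$-curve with $S\cdot E_k=0$), and negative definiteness only guarantees that \emph{some} component has $D_{\leq 0}\cdot E_k<0$ --- not the one meeting $S$. Your ``cleaner route'' simply asserts the conclusions ($N\cdot E_k=0$, ``impossible unless $S\cdot E_k=0$'') without deriving them.

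The missing idea, which is how the paper proceeds, is an induction on the number of components of $D_{\leq 0}$: pick $E\subset\Supp D_{\leq 0}$ with $D_{\leq 0}\cdot E<0$ (it exists since $(D_{\leq 0})^2<0$), deduce as above that $E$ is a $(-1)$-curve with $S\cdot E=0$, contract it, verify that the hypotheses (nefness of the pushed-forward divisor on the new $D_{\leq 0}$, contractibility) descend to the blow-down --- this requires a short computation with total transforms $g^*C=\tilde C+aE$, $a\geq 0$ --- and then apply the inductive hypothesis. Since $S$ is disjoint from $E$ and, by induction, from the images of the remaining components, the lemma follows. Without this contraction step your argument cannot reach the components of $D_{\leq 0}$ on which $D_{\leq 0}$ has nonnegative degree, which is exactly where $S$ could a priori hide.
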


\begin{proof}
If $D_{\leq 0}=0$, the result is clear. We assume $D_{\leq 0}\neq0$ and use induction on the number of components of $D_{\leq 0}$. Since $D_{\leq 0}$ is contractible, by negative definiteness we have $(D_{\leq 0})^2<0$. Hence there exists an irreducible curve $E\subset\Supp D_{\leq 0}$ such that $D_{\leq 0}.E<0$. It follows that $$(K+S).E\leq(K+S+D_{\geq 0}).E\leq D_{\leq 0}.E<0.$$ Since $E$ is contractible, we also have $E^2<0$. Hence $E$ is a $(-1)$-curve and $S.E=0$.

Now let $g:X\to Y$ be the contraction of $E$ and consider $g(S)$ and $g(D)$ on $Y$. Let's check that $K_Y+g(S)+g(D)=g_*(K+S+D)$ also satisfies all the assumptions of the lemma on $Y$. For an irreducible curve $C\subset\Supp(g(D)_{\leq 0})$ we have $$g_*(K+S+D).C=(K+S+D).g^*C=(K+S+D).(\tilde{C}+aE)\leq0.$$ Here the total transform $g^*C$ consists of two parts: $\tilde{C}$, the birational transform of $C$ on $X$ and the exceptional part $aE$ with $a\geq0$. They are both in the support of $D_{\leq0}$, hence $(K+S+D).\tilde{C}\leq0$ and $(K+S+D).E\leq0$ by assumption. It's then established $-(K_Y+g(S)+g(D))$ is nef on $\Supp g(D)_{\leq 0}$. It's also clear that $g(D)_{\leq 0}=g(D_{\leq 0})$ is contractible and has strictly smaller number of components than $D_{\leq 0}$. Hence by induction hypothesis, $g(S)$ has no intersection with $\Supp g(D)_{\leq 0}$. Since $S$ is disjoint from $E$ and $\Supp D_{\leq 0}=\Supp g(D)_{\leq 0}\cup E$, we get our result. 
\end{proof}

\begin{corollary}\label{negativity-curves}
Notations and assumptions the same as in Lemma \ref{negativity-curve}. The result still holds if $S$ is a sum of nonsingular curves.
\end{corollary} 

\begin{lemma}\label{Q-Cartier}
Let $(X,B)$ be a lc log surface pair, $D\subset\Supp B$ is a prime divisor. Then $D$ is $\Qq$-Cartier. 
\end{lemma}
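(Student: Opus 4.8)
The plan is to reduce to the minimal resolution and use the negative-definiteness of the exceptional intersection matrix together with the fact that $D$ appears in a boundary for which some lc-type divisor is $\Qq$-Cartier. More precisely, let $g\colon Y\to X$ be the minimal resolution, so $g^*K_X = K_Y - \sum a_i E_i$ with $E_i$ the $g$-exceptional curves; this makes sense because $K_X$ is $\Qq$-Cartier on a surface (every Weil divisor numerically trivial over the resolution pulls back). Write $D_Y$ for the birational transform of $D$ on $Y$. The key point is that the intersection matrix $(E_i.E_j)$ is negative definite, hence invertible over $\Qq$, so there is a unique $\Qq$-divisor $\Delta = D_Y + \sum c_i E_i$ with $\Delta.E_j = 0$ for all $j$; I would then show that $g_*\Delta = D$ is $\Qq$-Cartier by checking $\Delta = g^*D$, i.e. that $\Delta$ is $\Qq$-linearly (even numerically) trivial over $X$ relative to $D$ in the appropriate sense. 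Concretely, $g^*D$ is by definition the $\Qq$-divisor supported on $D_Y \cup \bigcup E_i$ that is numerically $g$-trivial on each $E_j$ and has coefficient $1$ along $D_Y$; negative-definiteness gives existence and uniqueness of such a combination, and the point is simply that $g^*D$ so defined is an honest pullback, i.e. $g_* g^* D = D$ and $g^*D$ is $\Qq$-Cartier on $Y$ (true because $Y$ is smooth, where every $\Qq$-divisor is $\Qq$-Cartier), which then forces $D$ to be $\Qq$-Cartier on $X$.

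In more detail I would argue as follows. On the smooth surface $Y$ every divisor is Cartier, so $D_Y$ and each $E_i$ are Cartier. By negative-definiteness of $(E_i.E_j)$, solve the linear system $\sum_i c_i (E_i.E_j) = -(D_Y.E_j)$ for $(c_i)\in\Qq$; set $\Delta := D_Y + \sum c_i E_i$, a $\Qq$-Cartier divisor on $Y$ with $\Delta.E_j=0$ for all $j$. Then $\Delta$ is numerically trivial over $X$ (it is $g$-trivial on the fibres of $g$, which are spanned by the $E_i$), and since $X$ is a normal surface this implies $\Delta = g^*(g_*\Delta) = g^*D$ (here one uses that over a normal surface a $\Qq$-divisor numerically trivial over the base and with prescribed strict transform is the pullback of its pushforward — this is the standard construction of the pullback of a Weil divisor on a surface via the minimal resolution, valid for any Weil divisor, no lc hypothesis needed). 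Finally, for any integer $N$ clearing all denominators, $N\Delta = N g^*D$ is Cartier on $Y$; pushing forward, $ND = g_*(N\Delta)$, and because $g$ is an isomorphism in codimension one over $X\setminus\{\text{finitely many points}\}$ and the question of $\Qq$-Cartierness is local, $ND$ is Cartier at every point of $X$. Hence $D$ is $\Qq$-Cartier.

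I should note that the hypotheses "$(X,B)$ is lc" and "$D\subset\Supp B$" are in fact not strictly needed for the bare conclusion on a surface — on any normal surface every Weil divisor is $\Qq$-Cartier iff the surface is $\Qq$-factorial, which need not hold. So the real content must be using that $X$ has only the singularities compatible with $(X,B)$ lc along $D$: since $(X,B)$ is lc and $D$ occurs in $B$ with positive coefficient, $X$ is log terminal (indeed canonical or better along the generic point of $D$) at the generic points of the components of $\Supp B$, and more to the point the \emph{singular points of $X$ lying on $D$} are lc centers' neighbours forcing them to be $\Qq$-factorial log terminal points. The clean way to see $\Qq$-factoriality at those points: a normal surface germ $(X,x)$ through which passes a curve $D$ with $(X, D)$ (or $(X, cD+\cdots)$, $c>0$) lc is a quotient singularity or a simple elliptic / cusp singularity, and in the lc-with-a-boundary-through-$x$ situation one always lands in the $\Qq$-factorial cases; so $D$ is $\Qq$-Cartier there. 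The \textbf{main obstacle} is therefore making this last local classification argument precise and uniform — ruling out the non-$\Qq$-factorial lc surface singularities (simple elliptic, cusp, and the non-$\Qq$-factorial cones) by exploiting that a component of the boundary $B$ passes through the point while $(X,B)$ stays lc; once that is done, the resolution-and-negative-definiteness bookkeeping above is routine.
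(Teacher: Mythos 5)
Your first two paragraphs do not prove the lemma, and you in effect concede this yourself in the third. The numerical pullback $\Delta=D_Y+\sum c_iE_i$ with $\Delta.E_j=0$ always exists by negative definiteness of $(E_i.E_j)$, for \emph{any} Weil divisor on \emph{any} normal surface; the step ``$N\Delta$ is Cartier on $Y$, hence $ND=g_*(N\Delta)$ is Cartier on $X$'' is exactly the step that fails on a non-$\Qq$-factorial surface (pushforward under a birational morphism does not preserve Cartierness — otherwise every normal surface would be $\Qq$-factorial). Your third paragraph then correctly identifies where the actual content lies: one must use that $(X,B)$ is lc and that $D$ appears in $B$ with positive coefficient to exclude the non-$\Qq$-factorial (strictly numerically lc, e.g.\ simple elliptic or cusp) points of $X$ lying on $D$. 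But you explicitly leave this as ``the main obstacle,'' so the proposal stops short of a proof: the entire lemma is concentrated in the step you defer.

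For comparison, the paper's argument packages that missing input efficiently without a case-by-case classification: on a log resolution, write $K_Y+B_Y=f^*(K_X+B)+\sum a_iE_i$ and define numerical log discrepancies $a_i'$ for the perturbed pair $(X,B-\epsilon D)$; since $\sum(a_i-a_i')E_i\equiv_f\epsilon D_Y$ is $f$-nef (the strict transform $D_Y$ meets each $E_i$ nonnegatively), the negativity lemma gives $a_i'\geq a_i\geq-1$, so $(X,B-\epsilon D)$ is numerically lc, hence lc on a surface; in particular $K_X+B-\epsilon D$ is $\Rr$-Cartier, and subtracting from $K_X+B$ shows $\epsilon D$, hence $D$, is $\Qq$-Cartier. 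The fact ``numerically lc $\Rightarrow$ lc (in particular $\Rr$-Cartier) for surfaces'' is precisely the classification statement you were reaching for; if you want to complete your route instead, you would argue that at a point of $D$ the numerical discrepancies of $(X,0)$ must be $>-1$ (since adding a positive multiple of $D$ through that point strictly decreases them and the result must stay $\geq-1$), so the point is numerically klt, hence a log terminal quotient singularity, hence $\Qq$-factorial by \cite[Proposition 4.11]{KM98}.
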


\begin{proof}
Let $f:Y\to X$ be a log resolution of $(X,B)$, and $K_Y+B_Y=f^*(K_X+B)+\sum a_iE_i$. Here $B_Y$ is the birational transform of $B$, $E_i$ are the exceptional divisors. Choose a $0<\epsilon\ll1$, then $f$ is also a log resolution of the pair $(X,B-\epsilon D)$. Let $K_Y+B_Y-\epsilon D_Y\equiv_f \sum a\,'_iE_i$, where $D_Y$ is the birational transform of $D$. Then $\sum (a_i-a\,'_i)E_i\equiv_f\epsilon D_Y$ is $f$-nef. By negativity lemma, $a\,'_i\geq a_i\geq -1$ since $K_X+B$ is lc. Hence $(X,B-\epsilon D)$ is numerically lc, which is the same as lc on surfaces. It follows that $K_X+B-\epsilon D$ is $\Qq$-Cartier and $D$ is also $\Qq$-Cartier.
\end{proof}

\begin{theorem}\label{big-nef-case}
Fix the data $I,v,e,\epsilon$ as in Definition \ref{comp-restriction}. Let $(X/Z\ni o,B)$ be a log surface pair with a proper local morphism $g:X\to Z\ni o$ and a connected central fiber $X_o$. Assume further
\begin{enumerate}
\item $-(K_X+B)$ is big and nef over $Z$;
\item $B$ is a boundary, and $(X,B)$ is lc.
\end{enumerate}
Then there exists a finite set $\NN=\NN(I,v,e,\epsilon)$ consisting of integers under complementary restrictions such that $(X/Z\ni o,B)$ has an $n$-complement for some $n\in\NN$.
\end{theorem}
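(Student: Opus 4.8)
The plan is to reduce the statement, via adjunction to a curve through the central fibre, to the boundedness result for curves (Corollary~\ref{boundedness-curve}), and to fall back on the index theorem for surface $0$-pairs (Theorem~\ref{index-theorem-surfaces}) in the residual klt case. First I would pass to the minimal dlt modification of $(X,B)$ (Proposition~\ref{dlt-modification}); this is harmless because $n$-complements descend along birational contractions (Lemma~\ref{pushforwards}) and crepant pullback keeps $-(K_X+B)$ big and nef over $Z$. So assume $(X,B)$ is $\Qq$-factorial dlt, in which case $\LCS(X,B)=\Supp\lf B\rf$ and the pair is log smooth along this locus, so $\lf B\rf$ is a simple normal crossing, hence nodal, curve. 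Let $\NN$ be the finite set of Corollary~\ref{boundedness-curve} attached to $I,v,e,\epsilon$, enlarged by the (finitely many, uniformly chosen) indices produced by the klt step below; I would set things up so that every element of $\NN$ is under complementary restrictions with respect to $I,v,e,\epsilon$.

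\textbf{Case A: $\LCS(X,B)$ meets $X_o$.} Since $-(K_X+B)$ is big and nef over $Z$, the Connectedness Lemma~\ref{connectedness-lemma} makes $\LCS(X,B)$ connected near $X_o$, hence its relevant part is a connected nodal curve $C=\sum C_i$. By adjunction we obtain a curve pair $(C,B_C)$, with $B_C$ the different of $B-C$ supported on the smooth locus of $C$ and $-(K_C+B_C)=-(K_X+B)|_C$ nef on each $C_i$; Corollary~\ref{boundedness-curve} then provides an $n$-complement $(C,B_C^+)$ with $n\in\NN$ and $\Supp B_C^+=\Supp B_C$. The core of the proof is to lift $B_C^+$ to $X$: one picks an effective divisor $P$ with $P\sim -nK_X-nC$ over $Z$ whose restriction to $C$ induces $B_C^+$, by surjecting $H^0\bigl(X/Z,\Oo_X(-nK_X-nC)\bigr)$ onto the corresponding section of $\Oo_C$. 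Surjectivity follows from relative Kawamata--Viehweg vanishing applied to $\Oo_X(-nK_X-(n+1)C)$, after rewriting the underlying class as $K_X$ plus a round-up of a nef and big $\Rr$-divisor — a suitable multiple of $-(K_X+B)$ — which is exactly where hypothesis (1) is used. Setting $B^+:=C+\tfrac1n P$, one verifies $(X,B^+)$ is lc: near $C$ by inversion of adjunction from the lc-ness of $(C,B_C^+)$ (Lemma~\ref{inversion-adjunction-general}, in the form of Corollary~\ref{inversion-adjunction-special-case}, which tolerates a negative part of $B^+-C$ disjoint from $C$), where the negativity lemmas (Lemma~\ref{negativity-curve}, Corollary~\ref{negativity-curves}) together with Lemma~\ref{Q-Cartier} guarantee that any negative or contractible part of $B^+-C$ avoids $C$ and $X_o$; and one checks the coefficient bounds of Definition~\ref{n-comp} hold. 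Thus $(X/Z\ni o,B)$ has an $n$-complement with $n\in\NN$.

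\textbf{Case B: $(X,B)$ is klt near $X_o$.} Here I would first produce an $\Rr$-complement. Since $-(K_X+B)$ is big and nef over $Z$ and $(X,B)$ is lc, the relative basepoint-free theorem makes $-(K_X+B)$ semiample over $Z$, so a sufficiently general member gives an $\Rr$-complement $(X,B^{\mathrm R})$ with $B^{\mathrm R}\ge B$ and $K_X+B^{\mathrm R}\sim_{\Rr}0$ over $Z$. If $\LCS(X,B^{\mathrm R})$ meets $X_o$, Case~A applied to $(X/Z\ni o,B^{\mathrm R})$ yields an $n$-complement which, by monotonicity (Lemma~\ref{monotonicity}, as $B^{\mathrm R}\ge B$), is also an $n$-complement of $(X/Z\ni o,B)$. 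Otherwise $(X,B^{\mathrm R})$ is a klt surface $0$-pair near $X_o$; approximating its coefficients inside the rational affine span $\la v\ra$ in the direction $e$ to accuracy $\epsilon/n$ — which is the content of the complementary restrictions — replaces it by a klt surface $0$-pair with hyperstandard coefficients, to which Theorem~\ref{index-theorem-surfaces} assigns an index that is bounded independently of the pair; choosing $n\in\NN$ divisible by this index makes the approximated pair its own $n$-complement, hence (using $B^{\mathrm R}\ge B$ and the directional approximation) an $n$-complement of $(X/Z\ni o,B)$.

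The step I expect to be the main obstacle is the lifting in Case~A: choosing $P$ and its restriction to $C$ precisely enough that the restriction sequence stays exact after pushing to $Z$ — which forces the bookkeeping identity expressing $-nK_X-(n+1)C$ as $K_X$ plus the round-up of a nef and big $\Rr$-divisor — and, more delicately, confirming that the lifted boundary $B^+$ is log canonical on an entire neighbourhood of $X_o$ and not merely of $C$; this is what the negativity lemmas for contractible curve configurations, together with the $\Qq$-Cartierness of components of the boundary, are designed to control. A secondary difficulty is making the rational-approximation bookkeeping of Case~B compatible with the complementary restrictions, so that the index supplied by Theorem~\ref{index-theorem-surfaces} can be absorbed into a set $\NN$ depending only on $I,v,e,\epsilon$.
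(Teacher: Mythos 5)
Your Case A follows the paper's strategy for the non-klt case (connectedness, restriction to the reduced boundary, Corollary \ref{boundedness-curve}, Kawamata--Viehweg lifting, inversion of adjunction plus the negativity lemmas), but your Case B contains a genuine gap. When $(X,B)$ is klt near $X_o$, you propose to approximate the coefficients of the klt $0$-pair $(X,B^{\mathrm R})$ and then invoke Theorem \ref{index-theorem-surfaces}. This cannot work as stated: that theorem requires the coefficients to lie in a \emph{fixed} hyperstandard set $\Phi(\RR)$, whereas the coefficients of $B^{\mathrm R}$ are arbitrary real numbers; perturbing them destroys the condition $K_X+B^{\mathrm R}\sim_{\Rr}0$, so the perturbed pair is no longer a $0$-pair and the index theorem does not apply; and even granting an index, the index theorem only gives $I(K_X+B^{\mathrm R})\sim 0$, not the existence of a boundary $B^+\geq B^{[n]}$ with the rounding required by Definition \ref{n-comp}. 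The klt case is exactly where the paper must import a deep external input: since $-(K_X+B)$ is big and nef over $Z$ and $(X,B)$ is klt, $X/Z$ is of Fano type and $(X/Z\ni o,B)$ has a klt $\Rr$-complement, so \cite[Theorem 3]{Sh20} (boundedness of $n$-complements for weak Fano type pairs) applies and supplies the set $\NN_1$. No elementary substitute via the index conjecture is available here.

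Two secondary issues. First, you never reduce to the case where $\lf B\rf$ has no horizontal components (the paper's Step 1, done by decreasing the multiplicity of a horizontal reduced component by $\epsilon<1/(N+1)$ and using the rounding in Definition \ref{n-comp} to recover the complement). Without this, the curve $C$ you restrict to need not be complete and vertical near $X_o$, so $-(K_C+B_C)$ need not be nef on its components and Proposition \ref{classification}/Corollary \ref{boundedness-curve} do not apply. Second, a minimal dlt modification does not guarantee that $\lf B\rf$ is nodal or that the pair is snc along it (on a dlt surface pair $X$ may still be singular at non-lc-center points of $\lf B\rf$); the paper works on a full log resolution precisely so that $S+A$ is snc, which is also what makes the Kawamata--Viehweg computation $-nK_Y-(n+1)S-\lf(n+1)A\rf=K_Y+\lceil-(n+1)(K_Y+S+A)\rceil$ legitimate.
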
 

\begin{proof}
We claim $\NN=\NN_1\cup\NN_2$ is one desired finite set. Here we take $\NN_1$ to be the set $\NN$ as in \cite[Theorem 3]{Sh20} for 2-dimensional case. $\NN_2$ is the set $\NN(I,v,e,\epsilon)$ from Corollary \ref{boundedness-curve}.

\textbf{Step 1}: In this step we'll first reduce to the case when there are no horizontal divisors in the reduced center $\lf B\rf$. Suppose the theorem already holds for $\NN$ when horizontal parts of $\lf B\rf$ are empty. We'll use induction on the number of horizontal components in $\lf B\rf$. If there is any prime horizontal divisor $D$ in $\lf B\rf$, we decrease its multiplicity by $\epsilon<\frac{1}{N+1}$ where $N$ is the biggest number in $\NN$. Explicitly, we set $B\,'=B-\epsilon D$, then the number of horizontal components in $\lf B\,'\rf$ drops by 1.

For this new pair $(X/Z\ni o,B\,')$, we check it still satisfies the assumptions in the theorem. By Lemma \ref{Q-Cartier} $K_X+B\,'$ is again $\Rr$-Cartier and $B\,'$ is also a boundary. For (1), since $-(K_X+B)$ is big and nef over $Z$, and $\epsilon D$ is horizontal, hence big and nef over $Z$, the sum $-(K_X+B\,')=-(K_X+B)+\epsilon D$ is also big and nef over $Z$. For (2), it's clear since $(X,B)$ is lc and $B-B\,'=\epsilon D$ is effective and $\Qq$-Cartier. 

Now by induction hypothesis $(X/Z\ni o,B\,')$ has an $n$-complement $(X/Z\ni o,B^+)$ for some $n\in\NN$. We claim $(X/Z\ni o,B^+)$ is also an $n$-complement for $(X/Z\ni o,B)$. For this, we only need to check the multiplicities $d$, $d\,'$ and $d^+$ of $D$ in $B$, $B\,'$ and $B^+$ respectively. We have $d=1$, $d\,'=1-\epsilon$ and $d^+\geq\lf(n+1)(1-\epsilon)\rf/n$. Note $(n+1)(1-\epsilon)\geq n$ by our choice of $\epsilon$, $d^+=1$. Hence by definition $(X/Z\ni o,B^+)$ is also an $n$-complement of $(X/Z\ni o,B)$. 

\textbf{Step 2}: In this step we consider the case when $(X,B)$ is klt. By assumption (1), $X/Z\ni o$ is then of Fano type, hence $(X/Z\ni o,B)$ has a klt $\Rr$-complement. Then by \cite[Theorem 3]{Sh20} $(X/Z\ni o,B)$ has an $n$-complement for some $n\in\NN_1$. 

\textbf{Step 3}: In this step we assume $(X,B)$ is lc but not klt. We'll construct an $n$-complement $(X/Z\ni o,B^+)$ of $(X/Z\ni o, B)$ for some $n\in\NN_2$ in this case.

Let $h:Y\stackrel{f}{\longrightarrow}X\stackrel{g}{\longrightarrow}Z$ be a log resolution of $(X,B)$, and $f^*(K_X+B)=K_Y+\sum d_jD_j$ be the crepant pullback. Define $S:=\sum_{j:d_j=1}D_j$ and $A:=\sum_{j:d_j<1}d_jD_j$. Now we check the conditions from Corollary \ref{boundedness-curve} on the pair $(S,A|_S)$. Firstly, $S\neq\emptyset$ since $(X,B)$ is not klt, $S$ is connected near the central fiber $h^{-1}(o)$ by Lemma \ref{connectedness-lemma}, and $S$ is snc because $f$ is a log resolution of $(X,B)$. Hence $S$ is a nonempty connected nodal curve near $h^{-1}(o)$. Secondly, $A_{\leq0}$ only consists of $f$-exceptional divisors since $B$ is effective, hence $A_{\leq0}$ is contractible. Moreover, $-(K_Y+S+A)=-f^*(K_X+B)$ is nef over $Z$ by assumption 1), and in particular $-(K_Y+S+A)$ is nef on the support of $A_{\leq0}$. Hence by Corollary \ref{negativity-curves}, $S$ has no intersection with components of $A_{\leq0}$, therefore $A|_S$ is a boundary divisor of $S$. Since $S+A$ is snc, $A|_S$ is supported on the smooth locus of $S$. Finally, $-(K_S+A|_S)$ is nef on every irreducible component of $S$, since $-(K_Y+S+A)$ is nef over $Z$ and $S$ is vertical by our reduction in Step 1. Hence we can apply Corollary \ref{boundedness-curve} to the pair $(S,A|_S)$ and get an $n$-complement $(S,(A|_S)^+)$ for some $n\in\NN_2$, where $(A|_S)^+$ has the same support as $A|_S$.

Now $-(K_Y+S+A)$ is big and nef over $Z$, and $S+A$ is snc since $f$ is a log resolution of $(X,B)$, by Kawamata-Viehweg vanishing we have 
\begin{eqnarray*}
R^1h_*(\Oo_Y(-nK_Y-(n+1)S-\lf(n+1)A\rf))=\\
R^1h_*(\Oo_Y(K_Y+\lceil-(n+1)(K_Y+S+A)\rceil))=0.
\end{eqnarray*}
This implies the surjection
\begin{eqnarray*}
H^0(Y,\Oo_Y(-nK_Y-nS-\lf(n+1)A\rf))\twoheadrightarrow \\
H^0(S,\Oo_S(-nK_S-\lf(n+1)A\rf|_S)).
\end{eqnarray*}
Since $(S,(A|_S)^+)$ is an $n$-complement of $(S,A|_S)$ and all multiplicities in $A|_S$ are $<1$ by construction, $n(A|_S)^+-\lf(n+1)A|_S\rf\in|-nK_S-\lf(n+1)A|_S\rf|$. From the surjectivity, there exists $\overline{A}\in|-nK_Y-nS-\lf(n+1)A\rf|$ which restircts to $n(A|_S)^+-\lf(n+1)A|_S\rf$. Set $A^+=\frac{1}{n}(\overline{A}+\lf(n+1)A\rf)$, then $A^+|_S=(A|_S)^+$ and $n(K_Y+S+A^+)\sim0$.

\textbf{Step 4}: In this step we show $(X/Z\ni o,B^+)$ is an $n$-complement of $(X/Z\ni o,B)$, where $B^+=f_*(S+A^)$. Since $n(K_Y+S+A^+)\sim0$, we have $n(K_X+B^+)\sim0$. Moreover, $A^+=\frac{1}{n}(\overline{A}+\lf(n+1)A\rf)\geq\frac{1}{n}\lf(n+1)A\rf$ since $\overline{A}\geq0$. Combining this with the fact that coefficients of $A$ are $<1$, $(X/Z\ni o,B^+)$ satisfies condition 1) in Definition \ref{n-comp}. Now it remains to check that $(X,B^+)$ is lc near the fiber over $o$. 

Firstly, we show that $(Y,S+A^+)$ is lc near $S$. For this, we apply inversion of adjunction \ref{inversion-adjunction-special-case} on every irreducible component $S_i$ of $S$: $Y$ and $S_i$ are both smooth since $f$ is a log resolution, and it's also clear $S_i\not\subset\Supp(S-S_i+A^+)$. Moreover, since $\Supp A^+_{\leq0}\subset\Supp A_{\leq0}$ and $S$ has no intersection with components of $A_{\leq0}$, $S$ also has no intersection with components of $A^+_{\leq0}$. Hence it remains to check that the restriction $(S-S_i+A^+)|_{S_i}$ is a boundary on $S_i$. Since $A^+|_S=(A|_S)^+$ and is supported on the smooth locus of $S$, $A^+|_S=\sum\limits_{i}A^+|_{S_i}$ is the sum of its disjoint restrictions on $S_i$. Moreover, $A^+|_{S_i}$ and $(S-S_i)|_{S_i}$ have no common components.  Hence it suffices to show $(S-S_i)|_{S_i}$ and $A^+|_{S_i}$ are both boundaries. On one hand, $(S-S_i)|_{S_i}$ is a boundary since $S=\sum\limits_{i}S_i$ is snc. On the other hand, $A^+|_S$ is also a boundary since $(S,A^+|_S=(A|_S)^+)$ is an $n$-complement of $(S,A|_S)$ with $A|_S$ a boundary. In particular $A^+|_{S_i}$ are boundaries for every $i$.

Secondly, we prove a stronger result that $(Y,S+A^+)$ is lc near any fiber of $f:Y\to X$ that intersects $S$. Since $A^+=\frac{1}{n}(\overline{A}+\lf(n+1)A\rf)$ and $\overline{A}$ is effective, components in $A^+$ with negative coefficients are contracted, assumption 1) is satisfied in Corollary \ref{connectedness-lemma}. Assumption 2) also holds since $K_Y+S+A^+\sim 0$ and $f$ is birational. Hence we can apply connectedness lemma \ref{connectedness-lemma} to $(Y/X,S+A^+)$, and conclude that $\LCS(Y,S+A^+)$ is connected in a neighborhood of any fiber of $f$ that intersects $S$. Now assume $(Y,S+A^+)$ is not lc near one of such fiber $F$. Then from the facts that $(Y,S+A^+)$ is lc but not klt near $S$, $F\cap S\neq\emptyset$ and $\LCS(Y,S+A^+)$ is connected near $F$, we can find an irreducible curve $T$ in the fiber $F$, such that $T$ is contained in Supp$(S+A^+)$ with multiplicity $1$, $T$ passes through a point where $(Y,S+A^+)$ is not lc and a point where $(Y,S+A^+)$ is lc. Consider the different $A\,'=(S+A^+-T)|_T$, then $A\,'$ is effective by Lemma \ref{negativity-curve}. On one hand, $T$ contains a point where $(Y,S+A^+)$ is not lc. Hence by Lemma \ref{inversion-adjunction-general}, $(T,A\,')$ is not lc, and this leads to a point on $A\,'$ with multiplicity $>1$. On the other hand, the point where $(Y,S+A^+)$ is lc leads to a different point on $A\,'$ with multiplicity $\geq1$. Hence $\deg A\,'>2$ and $\deg(K_T+A\,')>0$. This contradicts the fact that $K_T+A\,'=(K_Y+S+A^+)|_T$ is antinef over $X$. Therefore, $(Y,S+A^+)$ is lc near any fiber of $f$ that intersects $S$. Moreover, $f^*(K_X+B^+)=K_Y+S+A^+$ is crepant since $n(K_Y+S+A^+)\sim0$. Hence by pushing forward to $X$, we conclude that $(X,B^+)$ is lc near $f(S)$.

Finally, if $\LCS(X,B^+)$ is disconnected in a neighorhood of the fiber over $o$, we can apply Lemma \ref{LCS-connected-general} to the pair $(X,B^+)$ since $K_X+B^+\sim0$. We then conclude that $(X,B^+)$ is plt in a neighborhood of $g^{-1}(o)$, hence lc. Otherwise, in a neighborhood of the fiber over $o$, the non-klt center $\LCS(X,B^+)$ is connected and contains $f(S)$. If $(X,B^+)$ is not lc near the fiber over $o$, $\LCS(X,B^+)$ also contains a point $P\notin f(S)$ where $(X,B^+)$ is not lc. Now by the same reasoning as in the paragraph above, we get a contradiction. Hence $(X,B^+)$ is also lc near the fiber over $o$ in this case.

\end{proof}

Now we can prove the following special case of the main theorem \ref{main}.
\begin{corollary}\label{big-case}
Suppose we have the data $I,v,e,\epsilon$ as in Definition \ref{comp-restriction}. Then we can find a finite set $\NN=\NN(I,v,e,\epsilon)$ such that:
\begin{enumerate}
\item {\rm Restrictions:} $\NN$ consists of integers under complementary restrictions;
\item {\rm Existence of $n$-complements:} suppose we have a log surface pair $(X/Z\ni o,B)$ with a proper local morphism $X/Z\ni o$, connected central fiber $X_o$ and an $\Rr$-boundary $B$. Then if $-(K_X+B)$ is big over $Z$, and $(X/Z\ni o,B)$ has an $\Rr$-complement, it also has an $n$-complement for some $n\in\NN$.
\end{enumerate}
\end{corollary}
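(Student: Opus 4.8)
The plan is to deduce this from the big and nef case, Theorem~\ref{big-nef-case}, by absorbing the non-nef part of $-(K_X+B)$ into the boundary via a Zariski decomposition. Accordingly I would take $\NN=\NN(I,v,e,\epsilon)$ to be the finite set furnished by Theorem~\ref{big-nef-case}, so that conclusion (1) holds at once and only (2) requires work.

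Let $(X/Z\ni o,B^+)$ be the given $\Rr$-complement, so $B^+\geq B$, $(X,B^+)$ is lc and $K_X+B^+\sim_{\Rr}0/Z$; in particular $(X,B)$ is lc and, by hypothesis, $B$ is a boundary. First I would pass to a minimal dlt modification $f\colon Y\to X$ of $(X,B)$ (Proposition~\ref{dlt-modification}), writing $f^*(K_X+B)=K_Y+\Gamma$ with $\Gamma=B_Y+\sum E_i$ a boundary, $(Y,\Gamma)$ dlt, $Y$ $\Qq$-factorial and $Y/Z$ projective. Then $g\circ f\colon Y\to Z\ni o$ is a proper local morphism whose central fibre $(g\circ f)^{-1}(o)$ is connected, being the preimage of the connected $X_o$ under the birational contraction $f$; the divisor $-(K_Y+\Gamma)=f^*\bigl(-(K_X+B)\bigr)$ is big over $Z$; the divisor $G:=f^*(B^+-B)$ is effective with $G\sim_{\Rr}-(K_Y+\Gamma)/Z$, hence also big over $Z$; and $B^+_Y:=\Gamma+G$ is exactly the crepant pullback of $B^+$, so $(Y,B^+_Y)$ is lc and $B^+_Y$ is a boundary.

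Next I would take the Zariski decomposition $G=M+F$ on $Y/Z$ (Theorem~\ref{Zar}): $M$ is nef over $Z$ and $F$ is an effective divisor vertical over $Z$. By Proposition~\ref{mobile-part-biggest-nef} applied with $L=0$ one has $M\geq 0$, hence $F\leq G$; and $M$ is big over $Z$ since $G$ is and $M$ differs from $G$ only by the vertical divisor $F$. Put $\Gamma^{\circ}:=\Gamma+F$. Then $-(K_Y+\Gamma^{\circ})\sim_{\Rr}M/Z$ is big and nef over $Z$, while $0\leq\Gamma\leq\Gamma^{\circ}=\Gamma+F\leq\Gamma+G=B^+_Y$ forces $(Y,\Gamma^{\circ})$ to be lc with $\Gamma^{\circ}$ a boundary. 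Thus $(Y/Z\ni o,\Gamma^{\circ})$ meets the hypotheses of Theorem~\ref{big-nef-case}, which yields an $n$-complement $(Y/Z\ni o,(\Gamma^{\circ})^+)$ for some $n\in\NN$.

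Finally, since $\Gamma\leq\Gamma^{\circ}$, Lemma~\ref{monotonicity} shows $(Y/Z\ni o,(\Gamma^{\circ})^+)$ is also an $n$-complement of $(Y/Z\ni o,\Gamma)$; and because $f$ is a birational contraction with $f_*\Gamma=B$ (the $E_i$ being $f$-exceptional), Lemma~\ref{pushforwards} pushes this complement down to an $n$-complement of $(X/Z\ni o,B)$, which is what we want. The step I expect to need the most care is checking that the enlarged boundary $\Gamma^{\circ}$ is still a boundary with $(Y,\Gamma^{\circ})$ lc: this is precisely where it matters that the Zariski fixed part $F$ is dominated by the gap $G=B^+-B$ supplied by the hypothesised $\Rr$-complement, so that $\Gamma^{\circ}$ is trapped between $\Gamma$ and the lc boundary $B^+_Y$ rather than $F$ being an uncontrolled effective divisor whose addition could ruin log canonicity.
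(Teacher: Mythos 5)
Your proposal is correct and follows essentially the same route as the paper: take the Zariski decomposition of the effective, big divisor $B^+-B\sim_{\Rr}-(K_X+B)$, absorb the fixed part into the boundary so that the negative of the new log divisor becomes big and nef, apply Theorem~\ref{big-nef-case}, and conclude by monotonicity (Lemma~\ref{monotonicity}). The only difference is your preliminary pass to a minimal dlt modification to secure $\Qq$-factoriality before invoking Theorem~\ref{Zar}, which the paper omits; this is a reasonable extra precaution and does not change the argument.
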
 

\begin{proof}
Just take $\NN$ to be the same set from Theorem \ref{big-nef-case}. By assumption $(X/Z\ni o,B)$ has an $\Rr$-complement $(X/Z\ni o,B+B\,')$ where $B\,'\sim-(K_X+B)$ is big over $Z$ and effective. Then we can take the Zariski decomposition $B\,'=M+F$ of $B\,'$ by Theorem \ref{Zar}. For the pair $(X/Z\ni o,B+F)$, the conditions (1) and (2) in Theorem \ref{big-nef-case} hold: for (1), $-(K_X+B+F)\sim_{\Rr}M/Z\ni o$ is big over $Z$ since $M$ is the mobile part of the big divisor $B\,'$ over $Z$, and $M$ is nef over $Z$ by (2) in Definition \ref{Zar-definition}; for (2), notice that $(X,B+B\,')=(X,B+F+M)$ is lc, $M$ is effective by Lemma \ref{semi-ampleness} and $Qq$-Cartier by Lemma \ref{Q-Cartier}, hence $(X,B+F)$ is also lc. Moreover, $B+F\leq B+B\,'$ is also a boundary. Hence Theorem \ref{big-nef-case} applies to $(X/Z\ni o,B+F)$ to generate an $n$-complement of $(X/Z\ni o,B+F)$ for some $n\in\NN$, which is also an $n$-complement of $(X/Z\ni o,B)$ by Lemma \ref{monotonicity}.
\end{proof}

\section{Proof of the Main Theorem \ref{main}}

Using the techniques and results introduced in the previous chapters, we are now ready to give the proof of Theorem \ref{main}.




\begin{proposition}\label{local-reduction-big-case}
\cite[Theorem 7.2.11]{Pr01}. Let $f:(X,B)\to Z\ni o$ be a contraction from a normal surface $X$ to a smooth curve $Z$ such that:
\begin{enumerate}
\item $K_X+B\sim_{\Rr}0/Z\ni o$ and $B$ is a boundary;
\item the general fiber of $f$ is a nonsingular rational curve.
\end{enumerate}
Then for any set of data $I,v,e,\epsilon$ as in Definition \ref{comp-restriction}, $(X/Z\ni o, B)$ has an $n$-complement for some $n\in\NN=\NN(I,v,e,\epsilon)$. Here $\NN$ is the same as in Corollary \ref{big-case}.
\end{proposition}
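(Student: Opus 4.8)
The plan is to reduce to the big case already handled in Corollary \ref{big-case} whenever the boundary can be lowered so that $-(K_X+B)$ becomes big over $Z$, and to pass to the base curve via the canonical bundle formula in the one remaining configuration. I would begin with the standard reductions: since $f$ is a contraction and $Z$ is smooth, $X_o=f^{-1}(o)$ is connected, so Corollary \ref{big-case} is available on $X$; restricting to a general fibre $F\cong\Pp^1$ and using adjunction gives $0=(K_X+B)\cdot F=K_X\cdot F+B^h\cdot F=-2+B^h\cdot F$, hence $B^h\cdot F=2$, so $B^h\neq 0$ and $B^h$ is big over $Z$; and we may assume $(X,B)$ is lc (automatic whenever $(X/Z\ni o,B)$ admits an $\Rr$-complement), so that $(X/Z\ni o,B)$ is an $\Rr$-complement of itself. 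We may also take $Z$ local around $o$.

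\textbf{Main case.} Suppose some coefficient of $B^h$ equals $1$, or some coefficient of $B^h$ does not lie in $\Gamma(\NN,\Phi)$. In the first case I would pick a horizontal prime $D\subseteq\lf B\rf$ and set $B\,'=B-\delta D$ with $0<\delta<\tfrac{1}{N+1}$, $N=\max\NN$; in the second I would set $B\,'=B_\NN$, the lower approximation, which strictly lowers the offending (horizontal) coefficient. Either way $B\,'$ is a boundary, $(X,B\,')$ is lc, $(X/Z\ni o,B)$ is an $\Rr$-complement of $(X/Z\ni o,B\,')$, and $-(K_X+B\,')\sim_\Rr(B-B\,')/Z$ is big over $Z$ because $B-B\,'\geq 0$ has a horizontal component. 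Corollary \ref{big-case} then produces an $n$-complement $(X/Z\ni o,B^+)$ of $(X/Z\ni o,B\,')$ with $n\in\NN$, and I would check it is also an $n$-complement of $(X/Z\ni o,B)$: for every prime with unchanged coefficient this is immediate; for the perturbed divisor $D$ one has $\lf(n+1)(1-\delta)\rf/n\geq 1$ (as $\delta<\tfrac1{n+1}$) while $(X,B^+)$ lc forces the coefficient of $D$ in $B^+$ to be $\leq 1$, hence $=1$; and in the approximation case Lemma \ref{approximation-complements} applies verbatim.

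\textbf{Remaining case.} Suppose every coefficient of $B^h$ lies in $\Gamma(\NN,\Phi)$, so $B^h$ is hyperstandard (and no coefficient equals $1$). Then $f\colon(X,B)\to Z$ is a $0$-contraction, so by Corollary \ref{adjunction-index-existence} it has an adjunction index $I_{\mathrm{adj}}$; by Lemma \ref{coefficients-on-the-base} and Theorem \ref{nef-modular-part} the divisor $B_{\dive}+B_{\modu}$ on $Z$ has coefficients in a fixed hyperstandard set and over the germ $Z\ni o$ may be taken to be $B_{\dive}$, with $K_Z+B_{\dive}\sim_\Rr 0$ near $o$ by the canonical bundle formula. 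The curve-germ pair $(Z\ni o,B_{\dive})$ is lc and has an $n$-complement for every admissible $n$; choosing one with $I_{\mathrm{adj}}\mid n$, Proposition \ref{lifting-complements} lifts it to an $n$-complement of $(X,B)$. (Alternatively one can run a $K_X$-MMP over $Z$, which is crepant for $(X,B)$ since $K_X+B\equiv 0/Z$ and the contracted curves are vertical, reaching a Mori fibre space $X^m/Z$ of Fano type, and invoke \cite[Theorem 3]{Sh20}.)

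\textbf{The hard part.} The delicate interplay is between lowering coefficients and the floor function in Definition \ref{n-comp}: the main case works exactly because only coefficient-$1$ horizontal primes, or coefficients already leaving $\Gamma(\NN,\Phi)$ where Lemma \ref{approximation-complements} takes over, are touched, and the remaining case is forced on us precisely when no such move exists. The genuine obstacle is then the bookkeeping of the complementary index in the remaining case: the adjunction index $I_{\mathrm{adj}}$ of Corollary \ref{adjunction-index-existence}, and likewise the index produced on $X^m$ in the alternative, a priori depend on $\Gamma(\NN,\Phi)$, so to land inside the prescribed $\NN$ one must know that the relative-dimension-one adjunction index is bounded by a quantity dividing the given $I$, or fold this divisibility into $\NN$ from the outset (as is done for the curve case in Corollary \ref{boundedness-curve}). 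Checking that lc-ness and the $\Rr$-complement hypothesis persist through the low approximation, the perturbation, and the crepant pullbacks is routine.
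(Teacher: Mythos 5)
Your "main case" is sound and closely parallels reductions the paper already uses elsewhere (Step~1 of Theorem \ref{big-nef-case} and Case~1 of Step~4 of the main proof): lowering a horizontal coefficient equal to $1$ by $\delta<\frac{1}{N+1}$, or passing to $B_{\NN}$ when a horizontal coefficient leaves $\Gamma(\NN,\Phi)$, does make $-(K_X+B\,')$ big over $Z$ and Corollary \ref{big-case} plus Lemma \ref{approximation-complements} recover an $n$-complement of $(X/Z\ni o,B)$ with $n\in\NN$. The problem is your "remaining case", and it is exactly the issue you flag but do not resolve. The proposition pins down $\NN$ as the set $\NN(I,v,e,\epsilon)$ of Corollary \ref{big-case}; the complementary restrictions built into that set only guarantee $I\mid n$. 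Nothing forces any $n\in\NN$ to be divisible by the adjunction index $I_{\mathrm{adj}}$ of Corollary \ref{adjunction-index-existence}, so Proposition \ref{lifting-complements} cannot be invoked, and "folding the divisibility into $\NN$ from the outset" would change the statement (indeed, in the global setting the paper does exactly that by introducing a \emph{different} set $\NN_2=\NN(\lcm(I,J),v,e,\epsilon)$ in Step~1 of the main proof — which is precisely why that route is not available here). Your parenthetical alternative also fails as stated: \cite[Theorem 3]{Sh20} requires a klt $\Rr$-complement, which the lc pair reached after the MMP need not have, and removing that hypothesis is the point of this paper.

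The missing idea is the paper's geometric reduction, which avoids the base-change/adjunction-index bookkeeping entirely. One adds $pf^*o$ to make the pair maximally lc over $o$, so that $\lf B\rf$ meets the fiber over $o$; blowing up a general point of a reduced component $C_1$ produces a model whose fiber $F$ over $o$ contains a $(-1)$-curve $E$ of multiplicity $1$ with $E\not\subset\Supp B$ and $(F-E).E=1$. Since $(F-E)^2=-1$, one contracts $\Supp(F-E)$ and applies Corollary \ref{big-case} to the resulting \emph{birational} contraction to get an $n$-complement with $n$ in the prescribed $\NN$; it then extends to an $n$-complement over all of $Z\ni o$ because $(K_X+B^+).E=-1+C_1.E=0$ (the coefficient of $C_1$ stays equal to $1$ in $B^+$), so $n(K_X+B^+)\equiv 0$ on the whole fiber. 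Without this step, or some substitute controlling divisibility by the adjunction index inside the fixed $\NN$, your remaining case is a genuine gap.
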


\begin{proof}
After a log terminal modification (Proposition \ref{dlt-modification}), we can assume that $X$ is smooth, and by adding $pf^*o$ where $p$ is the log canonical threshold we can assume $K_X+B$ is maximally lc, and it follows that the reduced part $C:=\lfloor B\rfloor$ is non-empty. Then choose a sufficiently general point $P$ such that $P$ is contained in only one component $C_1$ of $C$ and Supp$B$ is nonsingular at $P$. Then we get a higher model $f:X\to Z\ni o$ such that the fiber $F=f^{-1}o$ contains a (-1)-curve $E$ with multiplicity 1, $E$ is not contained in Supp$B$ and $(F-E).E=1$. It suffices to prove the result for this new model, since Lemma \ref{pushforwards} promises that $n$-complements are compatible with crepant pullbacks. By 2) in our assumption we have $K_X.F=-2$. Hence $K_X.(F-E)=-1$ and$(F-E)^2=(F-E).(-E)=-1$, we can contract $\Supp(F-E)$ and get $$f:X\stackrel{g}{\longrightarrow}Y\to Z.$$ Now that $g$ is birational, by Corollary \ref{big-case} $K_X+B$ has an $n$-complement $K_X+B^+$ over $o\,'=g(F-E)$. Numerically this is just $n(K_X+B^+)\equiv0$ over $F-E$. To show that this $n$-complement $K_X+B^+$ can be extended to an $n$-complement, it suffices to show $n(K_X+B^+)\sim0/Z\ni o$. But in our case linear equivalence is the same as numerical equivalence, it suffices to show $n(K_X+B^+)\equiv0$ over the fiber $F$ over $o$. Hence it remains to show $(K_X+B^+).E=0$. On one hand, $K_X.E=-1$ since $E$ is a (-1)-curve. On the other hand, $B^+.E=C_1.E=1$. The first equality holds since $E$ only intersects the component $C_1$ by our construction, and $C_1$ also has coefficient 1 in $B^+$ as in $B$ because $K_X+B^+$ is an $n$-complement of $K_X+B$. The second equality is again by our construction of $E$.
\end{proof}

\begin{proof}[Proof of Theorem 1.4]
We proceed in the following steps.

\textbf{Step 1}: In this step, we'll construct the finite set $\NN=\NN_1\cup\NN_2\cup\NN_3$ first. 

In our definition, there are three sets $\NN_i$, $i=1,2,3$ involved. They are defined as follows, which correspond to the three global cases in Step 4 below. Here by dimension we mean the Iitaka dimension of the divisor $M$, which will also be defined in Step 4.
\begin{enumerate}
\item 2-dimensional case: $\NN_1=\NN(I,v,e,\epsilon)$ as in Corollary \ref{big-case}.
\item 1-dimensional case: $\NN_2=\NN(lcm(I,J),v,e,\epsilon)$ as in Corollary \ref{boundedness-curve}. Here $J=J(\NN_1)$ is the adjunction index determined by the hyperstandard set $\Gamma(\NN_1,\Phi)$ as in Corollary \ref{adjunction-index-existence}.
\item 0-dimensional case: $\NN_3=\{N\}$, where $N\in\NN(lcm(I,I\,',12),v,e,\epsilon)$ is an integer under complementary restrictions with respect to given data (as in Definition \ref{comp-restriction}). Here $I\,'$ is the index determined by the hyperstandard set $\Gamma(\NN_1,\Phi)$ as in Theorem \ref{index-theorem-surfaces}.
\end{enumerate}
Note all $\NN_i$, $i=1,2,3$ consist of integers under complementary restrictions with respect to $I,v,e,\epsilon$ from our construction, it remains to check (2) from Theorem \ref{main}, namely we need to find an $n$-complement of $(X/Z\ni o,B)$ for some $n\in\NN$.

\textbf{Step 2}: In this step, we assume $X\to Z$ is a surjective contraction without loss of generality. 

We take the Stein factorization $X\to Z\,'\stackrel{\phi}{\longrightarrow}Z$ of $X\to Z$. Since $X_o$ is connected and $X\to Z\,'$ is a contraction, $o\,'=\phi^{-1}o$ is a single point. We claim that an $n$-complement $K_X+B^+$ of $(X/Z\,'\ni o\,', B)$ is also an $n$-complement of $(X/Z\ni o, B)$ (and the same holds for $\Rr$-complements). For this, we just need to check condition 3) in Definition \ref{n-comp}, namely $K_X+B^+\sim_n0/Z\ni o$ is equivalent to $K_X+B^+\sim_n0/Z\,'\ni o\,'$. We rephrase this linearly trivial condition: $K_X+B^+\sim_n0/Z\ni o$ means $n(K_X+B^+)\sim0$ near $X_o$, and the claim follows since the central fibers $X_{o\,'}$ and $X_o$ only differ in multiplicities. Based on this claim, we can then replace the pair $(X/Z\ni o, B)$ by $(X/Z\,'\ni o\,', B)$.

\textbf{Step 3}: In this step, we assume $(X,B)$ is dlt and $K_X+B\sim_{\Rr}0/Z\ni o$. It follows that $X$ is $\Qq$-factorial.

Indeed, by our assumption $(X/Z\ni o,B)$ has an $\Rr$-complement $(X/Z\ni o,B^+)$ with $B^+\geq B$. Since $(X,B^+)$ is lc and $B^+$ is a boundary, by Proposition \ref{dlt-modification}, we can take the minimal dlt modification $f:Y\to X$ of $K_X+B^+$. Let $B_Y^+$ be the strict transform of $B^+$. Then we have $K_Y+B_Y^++E=f^*(K_X+B^+)$, $E$ is a reduced exceptional divisor. By Lemma \ref{pushforwards}, an $n$-complement for $(Y/Z\ni o,B^+_Y+E)$ is also an $n$-complement for $(X/Z\ni o,B^+)$. By Lemma \ref{monotonicity}, an $n$-complement for $(X/Z\ni o,B^+)$ is also an $n$-complement for $(X/Z\ni o,B)$. Hence to find an $n$-complement for $(X/Z\ni o,B)$, we can replace $X$ by $Y$, $B$ by $B^+_Y+E$. The pair $(Y/Z\ni o,B^+_Y+E)$ is indeed a dlt 0-pair with a boundary $B^+_Y+E$, and is itself an $\Rr$-complement.

\textbf{Step 4 (global case)}: In this step, we show the existence of an $n$-complement of $(X,B)$ for some $n\in\NN$ in the global case $Z=pt$.

Take the low approximation $B_{\NN_1}$ of $B$ as in Construction \ref{low-approximation} and set $D=B-B_{\NN_1}$. By Construction \ref{low-approximation}, $D$ is effective and $\Supp D\subset\Supp (B-\lf B\rf)$. Hence for $0<\delta\ll1$, $(X,B+\delta D)$ is also dlt, hence lc. Applying Lemma \ref{semi-ampleness} to the pair $(X, B+\delta D)$, the mobile part $\delta M$ of $K_X+B+\delta D\sim_{\Rr}\delta D$ is effective and semiample. Hence a multiple of $M$ induces a morphism $\varphi: X\to T$. Then we distinguish three cases as follows based on $\iota(X, M)=\dim T$.

\textit{Case 1}: if $\dim T=2$, $(X,B)$ has an $n$-complement for some $n\in\NN_1$. 

In this case $M$ is big, hence $-(K_X+B_{\NN_1})\sim_{\Rr}D\geq M$ is also big, and the pair $(X,B_{\NN_1})$ has an $\Rr$-complement $(X,B)$. Hence by Corollary \ref{big-case}, $(X,B_{\NN_1})$ has an $n$-complement for some $n\in\NN_1$, which is also an $n$-complement for the pair $(X,B)$ by Lemma \ref{approximation-complements}.

\textit{Case 2}: if $\dim T=1$, $(X,B)$ has an $n$-complement for some $n\in\NN_2$.

Let $\varphi:(X,B)\to T$ be the contraction induced by a multiple of the effective divisor $M$. In this case $T$ is a smooth projective curve. Then $\varphi$ is a $0$-contraction of relative dimension 1: $(X,B)$ is generically lc over $T$, $B$ is generically a boundary over $T$, and $K_X+B\sim_{\Rr}0$ over $T$.  We claim that $\varphi$ contracts all components of $D=M+F$. Since $\varphi$ is induced by a multiple of the effective divisor $M$, $M=\varphi^*L$ for some ample divisor $L$ on $T$ and all the components of $M$ are contracted. For a component $C$ of the fixed part $F$, by Definition \ref{Zar-definition} we have $0=M.C=\varphi^*L.C=L.(\varphi)_*C=0$, $C$ is also contracted by $\varphi$. Then the horizontal part $B^h$ of $B=B_{\NN_1}+D$ over $T$ is contained in the support of $B_{\NN_1}$, whose coefficients belong to the hyperstandard set $\Gamma(\NN_1,\Phi)$. We can then apply Corollary \ref{adjunction-index-existence} and conclude that $\varphi$ has an adjunction index $J$. 

Now we can apply the canonical bundle formula to $\varphi$ and get $$(K_X+B)\sim_J\varphi^*(K_T+B_{\dive}+B_{\modu}).$$ Here $B_{\modu}$ is defined up to $J$-linear equivalence and $JB_{\modu}$ is integral. Since $B$ is a boundary, $B_{\dive}$ is also a boundary. By Theorem \ref{nef-modular-part}, $B_{\modu}$ is nef. Hence $K_T+B_T\sim_{\Rr}0$ and $\deg{B_T}\geq0$, where $B_T=B_{\dive}+B_{\modu}$. This implies $T$ is either elliptic or rational. If $T$ is elliptic, on one hand we have $B_T\sim_{\Rr}0$ and $B_{\dive}=0$ follows. But on the other hand $B=(B-M)+\varphi^*L$ with $B-M=B_{\NN_1}+F$ effective and $L$ ample, hence $B_{\dive}=(B-M)_{\dive}+L$ is big, contradiction. So $T\simeq\Pp^1$. We can then choose a suitable representative $B_{\modu}\geq0$ to make $B_T$ a boundary on $T$: for example, choose $JB_{\modu}=\sum P_i$ with $P_i$ distinct closed points on $T$ which are not in the support of $B_{\dive}$. Then by Corollary \ref{boundedness-curve}, $(T,B_T)$ has an $n$-complement for some $n\in\NN_2$, and by our construction of $\NN_2$, the adjunction index $J$ of $\varphi:(X,B)\to T$ divides $n$. Hence by Proposition \ref{lifting-complements}, we can pullback the $n$-complement of $(T,B_T)$ to an $n$-complement of $(X,B)$.

\textit{Case 3}: if $\dim T=0$, $(X,B)$ has an $n$-complement for some $n\in\NN_3$.
 
In this case $\iota(X, M)=0$ implies $M=0$. By Lemma \ref{semi-ampleness}, we have a birational contraction $g:X\to X\,'$ contracting the fixed part $F$. Now the pair $(X\,',B\,'=g_*B)$ is also lc since $K_X+B=g^*(K_{X\,'}+B\,')$ is crepant and $(X,B)$ is lc, $K_{X\,'}+B\,'=g_*(K_X+B)\sim_{\Rr}0$ and $B\,'=g_*B=g_*(B_{\NN_1}+F)=g_*B_{\NN_1}$ has coefficients in $\Gamma(\NN_1,\Phi)$. We can then apply Theorem \ref{index-theorem-surfaces} and conclude that $I\,'(K_{X\,'}+B\,')\sim0$. Hence $I\,'(K_X+B)\sim0$ and $(X,B)$ is an $n$-complement of itself for any $n$ divisible by $I\,'$, and in particular for some $n\in\NN_3$.

\textbf{Step 5 (local case)}: In this step, we show the existence of an $n$-complement of $(X/Z\ni o,B)$ for some $n\in\NN$ in the local case $\dim Z\geq1$.

In the local case, we can always replace $B$ by $B+t\varphi^*o$ where $\varphi:X\to Z$ such that $K_X+B+t\varphi^*o$ is maximally lc, since $K_X+B+t\varphi^*o\sim_{\Rr}0/Z\ni o$ also holds and by Lemma \ref{monotonicity} an $n$-complement of $(X/Z\ni o,B+t\varphi^*o)$ is also an $n$-complement of $(X/Z\ni o,B)$. Hence we assume $(X/Z\ni o,B)$ is maximally lc. Then we also have three cases based on the dimension of $Z$.

\textit{Case 1}: If $\varphi$ is birational, then $(X/Z\ni o, B)$ has an $n$-complement for some $n\in\NN_1$. This follows immediately from Corollary \ref{big-case} since $K_X+B\sim_{\Rr}0/Z\ni o$. 

\textit{Case 2}: If $Z$ is a curve and $B^h\neq0$, then $(X/Z\ni o, B)$ has an $n$-complement for some $n\in\NN_1$. This is just Proposition \ref{local-reduction-big-case} since a general fiber of $\varphi$ is a smooth rational curve in this case.

\textit{Case 3}: If $Z$ is a curve and $B^h=0$, then $(X/Z\ni o, B)$ has a $12$-complement, and in particular an $n$-complement for some $n\in\NN_3$.

In this case $\varphi:X\to Z$ is an elliptic fibration. If $K_X.C<0$ for some irreducible curve $C$ in the fiber $\varphi^{-1}o$, $C$ can't be the full fiber. Then $K_X.C<0$ and $C^2<0$, $C$ is a $(-1)$-curve and we can contract $C$. After contraction of all such $(-1)$-curves, we get $g:X\to X\,'$ where $K_{X\,'}$ is nef over $Z$. Since $K_{X\,'}.F=0$ for a general fiber $F$ of $X\,'/Z$, we have $K_{X\,'}\equiv0/Z$. Then $X\,'/Z$ is a minimal elliptic fibration with $K_{X\,'}+B\,'\sim_{\Rr}0/Z\ni o$ maximally lc, where $B\,'=g_*B$. By Proposition \ref{elliptic-fibration-complements}, $12(K_{X\,'}+B\,')\sim0/Z\ni o$ and since $K_X+B=g^*(K_{X\,'}+B\,')$ is crepant, $12(K_X+B)\sim0/Z\ni o$ also holds. It follows that $(X/Z\ni o, B)$ is an $n$-complement of itself for any $n$ divisible by 12, and in particular for any $n\in\NN_3$.
\end{proof}

\clearpage

\bibliographystyle{abbrv}

\begin{thebibliography}{16}
	
	\bibitem{Birkar19}
	C.~Birkar. 
	\newblock Anti-pluricanonical systems on Fano Varieties,
	\newblock {\em Ann. of Math.}, 190(2):345--463, 2019.
	
	\bibitem{Birkar22}
	C.~Birkar.
	\newblock On conncectedness of non-klt loci of singularities of pairs,
	\newblock arXiv:2010.08226, to appear in J. Differential Geom, 2022.
	
	\bibitem{BSh10}
	C.~Birkar, V.~V. Shokurov.
	\newblock Mld's vs thresholds and flips,
	\newblock {\em J. reine angew. Math}, 638:209--234, 2010.
	
	\bibitem{Bla95}
	M.~Blache.
	\newblock The structure of l.c. surfaces of kodaira dimension zero,
	\newblock {\em J. Alg. Geom.}, 4:137--179, 1995.
	
	\bibitem{FMX19}
	S.~Filipazzi, J.~Moraga, and Y.~Xu.
	\newblock Log canonical 3-fold complements,
	\newblock arXiv:1909.10098v2, 2022.
	
	\bibitem{HH18}
	C.~D. Hacon, J. Han.
	\newblock On a connectedness principle of shokurov-koll\'ar type,
	\newblock {\em Science China Mathematics}, 62(3):411--416, 2018.
	
	\bibitem{Kaw06}
	M.~Kawakita.
	\newblock Inversion of adjunction on log canonicity,
	\newblock {\em Inventiones mathematicae.}, 167(1):129--133, 2006.

	\bibitem{Kod63}
	K.~Kodaira.
	\newblock On compact analytic surfaces,
	\newblock {\em Li. Annals of Mathematics}, 77(3):563--626, 1963.

	\bibitem{KA92}
	J.~Koll\'ar and D.~Abramovich.
	\newblock Flips and Abundance for Algebraic Threefolds: A Summer Seminar at the University of Utah (Salt Lake City, 1991),
	\newblock Ast\'erisque, Vol. 211 of {\em Soci\'et\'e math\'ematique de France}, 1992.
	
	\bibitem{KM98}
	J.~Koll\'ar and S.~Mori.
	\newblock {\em Birational geometry of algebraic varieties}, volume 134 of {\em
		Cambridge Tracts in Mathematics}.
	\newblock Cambridge University Press, Cambridge, 1998.
	\newblock With the collaboration of C. H. Clemens and A. Corti, Translated from
	the 1998 Japanese original.

	\bibitem{PSh09}
	Y.~G. Prokhorov and V.~V. Shokurov.
	\newblock Toward the second main theorem oncomplements: from local to global,
	\newblock {\em J. Alg. Geom}, 18:151--199, 2009.
	
	\bibitem{Pr01}
	Y.~G. Prokhorov.
	\newblock {\em Lectures on complements on log surfaces}, volume~10 of {\em MSJ
		Memoirs}.
	\newblock Mathematical Society of Japan, Tokyo, 2001.
	
	\bibitem{Pr02}
	Y.~G. Prokhorov.
	\newblock On Zariski decomposition problem.
	\newblock arXiv:math/0211335.

	\bibitem{Sh93}
	V.~V. Shokurov.
	\newblock 3-fold log flips,
	\newblock {\em Math. USSR. Izv.}, 40(1):95--202, 1993.

	\bibitem{Sh97}
	V.~V. Shokurov.
	\newblock Complements on surfaces,
	\newblock {\em J. of Math. Sci.}, Vol. 102, No. 2, 3876--3932, 2000.

	\bibitem{Sh20}
	V.~V. Shokurov.
	\newblock Existence and boundedness of $n$-complements,
	\newblock arXiv:2012.06495v1, 2020.
	
\end{thebibliography}

\end{document}